\journalname{Communications in Mathematical Physics}
\newtheorem{observation}{Observation}%[section]
\begin{document}
\newcommand{\lims}{{\overline{\lim}}}
\newcommand{\NN}{{\mathbb N}}
\newcommand{\RR}{{\mathbb R}}
\newcommand{\ZZ}{{\mathbb Z}}
\newcommand{\QQ}{{\mathbb Q}}
\newcommand{\CC}{{\mathbb C}}
\newcommand{\dv}{\operatorname{dv}}
\newcommand{\dz}{\operatorname{dz}}
\newcommand{\dxdy}{\operatorname{dxdy}}
\newcommand{\area}{{\rm area}}
\newcommand{\diam}{{\rm diam}}
\newcommand{\deriv}[2]{\frac{d #1}{d #2}}
\newcommand{\pderiv}[2]{\frac{\partial #1}{\partial #2}}
\newcommand{\dbyd}[1]{\frac{d}{d #1}}
\newcommand{\pdbyd}[1]{\frac{\partial}{\partial #1}}
\newcommand{\tensor}{\otimes}
\newcommand{\LL}{\mathcal{L}}
\newcommand{\p}{\prime}
\newcommand{\A}{\mathcal{A}}
\newcommand{\B}{\mathcal{B}}
\newcommand{\X}{\mathcal{X}}
\newcommand{\bigo}{\mathcal{O}}
\newcommand{\smallo}{\mathcal{o}}
\newcommand{\cG}{\mathcal{G}}
\newcommand{\cGo}{\mathcal{G}_\omega}
\newcommand{\cH}{\mathcal{H}_{l,\omega}}
\newcommand{\Hlo}{\mathcal{H}_{l,\omega}}
\newcommand{\bH}{\bf{H}}
\newcommand{\te}{\theta}
\newcommand{\g}{\gamma}
\newcommand{\cS}{\mathcal{S}}
\newcommand{\uu}{\textbf{u}}
\newcommand{\tu}{\tilde{u}^0}
\newcommand{\Y}{\mathcal{Y}}
\newcommand{\Z}{\mathcal{Z}}
\newcommand{\proj}{\mathrm {proj}}
\newcommand{\dive}{\mathrm {div}}

\begin{center}
\textbf{{Stability of Stationary Wave Maps from a Curved Background to a Sphere}}
\end{center}
\vspace*{2 cm}
\textbf{Author:} Sohrab M. Shahshahani\let\thefootnote\relax\footnotetext{Section de math$\acute{e}$matiques, $\acute{E}$cole Polytechnique F$\acute{e}$d$\acute{e}$rale de Lausanne, FSB-SMA, Station 8 - B$\hat{a}$timent MA, CH-1015 Lausanne, Suisse\\
E-mail address: sohrab.shahshahani@epfl.ch\\\\This work is part of the author's Ph.D. thesis.}\\\\\\\\

\abstract{ We study time and space equivariant wave maps from $M\times\RR\rightarrow S^2,$ where $M$ is diffeomorphic to a two dimensional sphere and admits an action of $SO(2)$ by isometries. We assume that metric on $M$ can be written as $dr^2+f^2(r)d\theta^2$ away from the two fixed points of the action, where the curvature is positive, and prove that stationary (time equivariant) rotationally symmetric (of any rotation number) smooth wave maps exist and are stable in the energy topology. The main new ingredient in the construction, compared with the case where $M$ is isometric to the standard sphere (considered by Shatah and Tahvildar-Zadeh \cite{ST1}), is the the use of triangle comparison theorems to obtain pointwise bounds on the fundamental solution on a curved background.}

\def\thesection{\arabic{section}}
\section{Introduction}

\begin{center}

\end{center}

\vspace*{0.5cm}
This work is a generalization of the work of Shatah and Tahvildar-Zadeh on the stability of equivariant wave maps between spheres \cite{ST1}. We consider wave maps (to be defined below) $U:~M\times \RR \rightarrow S^2,$ where the manifold $M$ is diffeomorphic to $S^2,$ and accepts an effective action of $SO(2)$ by isometries. This action will have exactly two fixed points, and we assume that the curvature is positive near these points. Moreover we assume that the metric on $M$ has the form $dr^2+f^2(r)d\theta^2,~r\in(0,R),~\theta\in S^1,$ away from the fixed points. $f$ behaves like $\sin(\frac{\pi r}{R})$ near the endpoints of $(0,R).$ In fact as shown in \cite{dC} p. 292, $f$ satisfies
\begin{align*}
&f(r) = r - \frac{k(r)}{6}r^3 + Y(r),\\
&\lim_{r\rightarrow 0}\frac{Y}{r^3} = 0.
\end{align*}
Shatah and Tahvildar-Zadeh treated the special case where $M=S^2.$ With $A=\bigl(\begin{smallmatrix} 0 & -1 & 0 \\ 1 & 0 & 0 \\ 0 & 0 & 0\end{smallmatrix}\bigr)$ denoting the infinitesimal generator of the action of $SO(2)$ on $\RR^3,$ our main result can be summarized as follows:\\\\

\textbf{\emph{Main Result:}} \textit{For every nonzero real number $\omega$ and nonzero integer $l,$ there exists a compact (in the energy topology) family of initial data leading to wave maps satisfying $U(t,r,\theta)=e^{A(\omega t+l\theta)}u(r),$ which is stable in the energy norm under small perturbations of the initial data in the same equivariance class.}\\\\

We refer the reader to Theorem 5 for a more precise statement.\\\\
\subsection{Background and History}
\par
Wave maps are by definition the critical points of the action
\begin{align*}
\mathcal{Q}(U) := \int_{M\times \RR}\Big<\partial_\alpha U,\partial^{\alpha}U\Big>d\mu_g,\quad\alpha=0,1,2,
\end{align*}
where $\partial^\alpha=g^{\alpha\beta}\partial_\beta$ for $g= \textrm{diag}(-1,1,f^2(r))$ the Lorentzian metric on $M\times\RR.$ Einstein's summation convention is in effect here, and $<,>$ denotes the standard inner product on $\RR^3.$ More concretely the action takes the form
\begin{align*}
\mathcal{Q}(U) = \frac{1}{2}\int_{S^2\times\RR}(-|U_t|^2 + |\nabla_{r,\theta} U|^2)f(r)dtdrd\theta,
\end{align*}
and a stationary point of this action satisfies
\begin{align*}
 \Box U = B(U)(DU,DU),
 \end{align*}
 if the target is embedded or
 \begin{align*}
 \Box U^c + \Gamma^c_{db}(U)\partial^\alpha U^d\partial_\alpha U^b = 0
 \end{align*}
 in local coordinates. Here $B$ denotes the second fundamental form of the embedding, $\Gamma^c_{bd}$ are the Christoffel symbols of the target, and $\Box$ is the wave operator on the domain $\partial_t^2-\triangle_M.$ We will use both the intrinsic and the extrinsic formulations. For readers familiar with harmonic maps, we mention that wave maps are the equivalent of harmonic maps in the Lorentizian setting much in the same way as the wave operator is the equivalent of the Laplacian. In other words, wave maps are defined exactly in the same way as harmonic maps with the sole distinction that the metric on the domain is assumed Lorentzian rather than Riemannnian.\\\\
 Wave maps have been studied extensively in the case of a flat background. The current work uses mainly the methods developed in the early works of Christodoulou, Shatah, and Tahvildar-Zadeh on wave maps with rotational symmetries. These are \cite{ST2} and \cite{CT} where the authors prove global regularity for the Cauchy problem of rotationally equivariant wave maps to rotationally symmetric Riemannian targets, under not very restrictive assumptions on the target. The theory for a curved background is less developed. The paper \cite{ST1} of Shatah and Tahvildar-Zadeh on wave maps from $S^2\times\RR$ to $S^2$ is the one the current work attempts to generalize. In \cite{ST1} The authors prove that between these spaces, space and time equivariant wave maps exist. Due to the time symmetry these are called stationary maps. Global well-posedness is also proved for spatially equivariant data, under small energy assumptions. Shatah and Tahvildar-Zadeh then combine this with the existence to prove a stability result in the energy norm for the stationary wave maps. The general outline of our work is the same as \cite{ST1}. The difference is that the domain sphere is equipped with a general rotationally symmetric metric which may be different from the standard one. This means that unlike \cite{ST1}, we do not have the Penrose transform at our disposal, and we are therefore led to consider the fundamental solution to the wave equation on a curved background derived in \cite{F}. See subsections 1.2 and 1.3 below for more details.\\

 Before we continue with the description of the results in the current paper, we digress to provide a brief history of the problem, mostly in the case of flat backgrounds. Using the same techniques as the works discussed in the preceding paragraph Christodoulou and Tahvildar-Zadeh \cite{CT2} proved scattering of wave maps with smooth radial data to targets satisfying similar conditions to those considered in \cite{CT}. For flat backgrounds, the issue of local well-posedness for the wave maps Cauchy problem in subcritical regularities $s>n/2$ was settled in \cite{KT}, \cite{KM} and \cite{KSel1}, \cite{KSel2}. At the critical regularity Tataru proved well-posedness for $n\geq2$ in the Besov space ${\dot{B}}^{n/2}_1$ in \cite{Tat1} and \cite{Tat2}. On the level of Sobolev spaces, in the breakthrough papers \cite{T1}, \cite{T2} Tao proved global regularity for data with small critical Sobolev norm, and target $S^2.$ Using similar methods, this was later extended to other targets and dimensions by Klainerman, Rodnianski \cite{KR}, Krieger \cite{Kr1}, \cite{Kr2}, \cite{Kr3}, Nahmod, Stephanov, Uhlenbeck \cite{NSU}, and Tataru \cite{Tat3}, \cite{Tat4}. For $n\geq 4$ Shatah and Struwe \cite{SS2} obtained similar results to those of \cite{T1} but using a simpler machinery. In dimension $4$ Lawrie \cite{Law} has recently generalized the results of \cite{SS2} to certain perturbations of the standard metric on the domain. Under assumptions of rotational symmetry, Struwe showed in \cite{St1} and \cite{St2} that blow up leads bubbling of non-constant harmonic maps, which in the radial context proves global regularity for compact targets, regardless of the size of the data. Using the methods of these papers Nahas \cite{N} extended the scattering result in \cite{CT2} to general compact targets. In \cite{KS} Krieger and Schlag were able to further develop the concentration compactness techniques introduced in \cite{BG}, \cite{KM1}, \cite{KM2}, to prove global regularity and scattering for large critical data and hyperbolic targets. Using different techniques Sterbenz and Tataru \cite{StT1}, \cite{StT2} proved global regularity for data with energy below that of the lowest energy harmonic map, for more general targets. Tao \cite{T3} has also been able to extend the methods of \cite{T1}, \cite{T2} to obtain global regularity for large data and hyperbolic targets. See \cite{CKM}, \cite{C} and \cite{LS} for other instances of applications of the ideas in \cite{KM1}, \cite{KM2} to wave maps. The construction of blow up solutions to the wave maps problem was initiated by Shatah \cite{S} and Cazenave, Shatah, Tahvildar-Zadeh \cite{CST}. Following the bubbling results of Struwe \cite{St1} and \cite{St2} formation of singularities were further studied in the works of Krieger, Schlag, Tataru \cite{KST}, Rodnianski, Sterbenz \cite{RS}, Raphael, Rodnianski \cite{RR}, and Carstea \cite{Ca}. The interested reader can consult the historical surveys in \cite{KS}, \cite{KST}, \cite{RR}, \cite{SS1}, and \cite{ST1} for more information on the history of the problem.\\\\

 \subsection{General Outline}

 \par

Following \cite{ST1} we divide our work into the following three parts. From here on we write $S^2$ for $M$ with the understanding that the metric is not necessarily the standard one.\\\\

Section 2: Existence of Stationary Wave Maps.\\\\

We will sometimes refer to this part as the elliptic part, because the methods used here come mainely from the theory of elliptic PDE. Here we make the basic time and space equivariance assumptions
\begin{align*}
&U(t,x)=e^{A\omega t}u(x),\\
&u(r,\theta)=e^{Al\theta}u(r,0),
\end{align*}
where $A=\bigl(\begin{smallmatrix} 0 & -1 & 0 \\ 1 & 0 & 0 \\ 0 & 0 & 0\end{smallmatrix}\bigr)$ is the infinitesimal generator of the action of $SO(2)$ on $\RR^3,$ and $l\in \ZZ,~\omega\in\RR_{+}$ are fixed. Such maps will be referred to as stationary and the aim of this section is to prove the existence of stationary wave maps.
 The time equivariance assumption reduces the problem of finding minimizers $U(t,x)$ for $L$ to that of finding minimizers $u(x)$ for a reduced functional $\cGo.$ The main theorem of this section then states that if we restrict $\cGo$ to $H^1$ maps of degree $l$ satisfying the space equivariance condition, every minimizing sequence converges strongly sub-sequentially in $H^1$ to a smooth map of degree $l.$ This establishes the existence of smooth stationary wave maps. Note that since we are in two space dimensions, according to a theorem of Schoen and Uhlenbeck (see \cite{St3} page 253, or \cite{SU} section $IV$) the homotopy class of a map in $H^1(M,S^2)$ is well-defined and we can therefore talk about the degree of such maps.\\\\

 Sections 3 and 4: Holder Continuity, and Higher Regularity.\\\\

This part will occasionally be referred to as the hyperbolic part, because here we treat the Cauchy problem for wave maps in our setting. This is the longest section. We drop the time equivariance assumption and prove that the Cauchy problem for spatially equivariant data of low energy is well posed. Specifically suppose we are given initial data $(U(0),U_t(0))=(f,g),$ where $f$ and $g$ satisfy the second equivariance assumption above and $\|df\|_{{H}^1}+\|g\|_{L^2}<\epsilon.$ Then the main theorem states that if $\epsilon>0$ is chosen sufficiently small, the Cauchy problem
 \begin{align*}
 \Box U = B(U)(DU,DU)
 \end{align*}
 with initial data $(f,g)$ has a global smooth solution. Local existence is standard and the difficulty lies in proving that blow up cannot occur. Because of the symmetry the first possible blow up will be at the origin of space-time (after an appropriate time shift). This part is divided into two subparts:\\\\

 a. Holder continuity:\\\\

 Here we prove Holder estimates on the local solution by bounding its derivatives along the characteristic directions $\eta=t+r$ and $\xi=t-r.$ We conclude that we have a globally defined Holder continuous solution.\\\\

 b. Higher regularity:\\\\

 Here we prove that the solution found in subpart a is smooth. This is done by covariantly differentiating the equation and then applying the methods of part a.\\\\

 Section 5: Stability\\\\

 We combine the results of the previous two parts with conservation methods to show that the stationary maps found in the elliptic section are stable under equivariant perturbations. To state the results precisely, we introduce the product norm topology on $H^1\times L^2(S^2),$ and to any minimizer $v$ for the functional $\cGo$ of the elliptic section we associate an element $\tilde{v}=(v,A\omega v)$ in $H^1\times L^2(S^2).$ Let $S$ be the family of such elements. $S$ is compact by the results of the elliptic section. Similarly given an $H^1$ map $U(x,t)$ we let $U^t:=(U(t,.),U_t(t,.))\in H^1\times L^2(S^2).$  We denote by $d$ the distance function associated with the norm above. Fix a stationary solution $v$ corresponding to $\tilde{v}\in H^1\times L^2.$ The main theorem states that there is a small $\eta>0$ (independent of $v$) such that given $\epsilon \in(0,\eta)$ we can find $\delta(\epsilon)$ such that if $u$ is the classical solution on $[0,T)\times S^2$ with initial data $(f,g)$ satisfying $d((f,g),\tilde{v})<\delta,$ then $d(u^t,S)\leq \epsilon$ for all $t<T$ and $T$ is in fact infinity.\\\\

 \subsection{Detailed Outline}

 \par

 We now proceed to provide the outlines of our arguments in the elliptic and hyperbolic parts, highlighting the differences with previous works. The stability part is almost word by word the same as the corresponding section in \cite{ST1} and requires no further elaboration. Before continuing we remark that except in the section on higher regularity (section 4 in the text) the target is assumed to be a surface of revolution diffeomorphic to $S^2.$ Even the arguments for higher regularity are expected to work for this more general setting with mild modifications, but we carried them out in the simpler case for additional clarity.\\\\
 Section 2: Existence of Stationary Wave Maps.\\\\
 This is the elliptic part, and the arguments follow closely those of \cite{ST1}. In our case the metric in polar coordinates takes the form $dr^2+f^2(r)d\theta^2$ instead of $d\alpha^2+\sin^2\alpha~d\beta^2,$ but since $f(r)$ behaves like $\sin r$ near the end points most of the arguments from \cite{ST1} carry over easily. The key point here is that using the geometry of the problem we can derive an expansion near $r=0$ of the form
 \begin{align*}
 f(r)=r-\frac{k(r)}{6}r^3+Y(r),
 \end{align*}
 (and a similar one near the other fixed point of the $SO(2)$ action, i.e. the other end point of the definition of the radial variable $r$) where $k$ is the Gaussian curvature and $Y=o(r^3)$ as $r$ approaches zero.\\
 We also take the target to be a general surface of revolution in $\RR^3$ diffeomorphic to the standard sphere, and satisfying the same geometric conditions as the domain. This allows us to write the metric on the target as $d\alpha^2+g^2(\alpha)d\beta^2$ in polar coordinates, where $g$ satisfies a similar expansion near the end points of the domain of definition of $\alpha.$\\
 We want to find minimizers of the wave maps functional
 \begin{align*}
\mathcal{Q}(U) = \frac{1}{2}\int_{S^2\times\RR}(-|U_t|^2 + |\nabla U|^2)f(r)dtdrd\theta .
 \end{align*}
 Just as in \cite{ST1} writing $U(t,x)=e^{A\omega t}u(x)$ we reduce our task to finding minimizers for the functional
 \begin{align*}
 \cGo(u)=\frac{1}{2}\int_{S^2}(|\nabla u|^2 - \omega^2| Au|^2)f(r)drd\theta,
 \end{align*}
 defined on $H^1(S^2;S^2).$ Such a minimizer $u$ will satisfy the elliptic Euler Lagrange equations
 \begin{align*}
\Delta_{S^2}u  = B(u)(\nabla u, \nabla u) + \omega^2(A^2u - < A^2u, \nu(u)>\nu(u)),
 \end{align*}
 where $\nu$ is the unit outward-pointing normal vector on the target. We will show that if $u\in H^1$ is spatially equivariant then it is Holder continuous away from the end points of $r$ (sometimes referred to as the poles from now on), with Holder norm depending on the $H^1-$norm of $u,$ and it can be written as $u(r,\theta)=(\varphi(r),\zeta(r)\theta^l).$ Here we are using the polar coordinates $(\alpha,\beta)\in (0,H)\times S^1$ on the target. $\varphi:[0,R]\rightarrow [0,H]$ is continuous and sends end points to end points and $\zeta:[0,R]\rightarrow S^1$ is of little consequence. In fact $\zeta$ can be set equal to $1$ without loss of generality. Moreover the degree of $u$ is $\pm l$ or $0$ depending on the image of the end points of $[0,R]$ under $\varphi,$ and $\varphi(0)=0,~\varphi(R)=H$ if $\deg u =l.$ This degree computation is the first point where our work differs from \cite{ST1}. There $\varphi$ has the entire real line as its target and the degree of $u$ may be any multiple of $l.$ A more complicated degree formula is also derived for $u$ depending on $\varphi$ (see \cite{ST1} page 236). We show that the only possible degrees are $l,~-l$ and $0,$ and that the the range of $\varphi$ is $[0,H]$ ($H=\pi$ if the target is the standard sphere). This is proven analytically, but a topological argument is also provided in a footnote.\\
 We denote the set of equivariant $H^1$ maps of degree $l$ by $X_l.$ The main theorem of this section states that if $\{u_n\}$ is a minimizing sequence for $\cGo$ in $X_l$ then there is a subsequence that converges strongly in $H^1$ to a smooth map in $X_l.$\\
 The representation above for $u$ allows us to reduce the problem to the study of the minima of the functional
 \begin{align*}
 \Hlo(\varphi):=\pi\int_0^R[(\varphi^{'})^{2}+ (\frac{l^2}{f^2}-\omega^2)g^2(\varphi)]fdr,
 \end{align*}
 over the set of continuous $\varphi:[0,R]\rightarrow [0,H]$ satisfying the boundary conditions $\varphi(0) =0$ and $\varphi(R)= H.$ Specifically it will be sufficient to show that if $\varphi_n$ is a minimizing sequence of continuous functions satisfying the right boundary conditions, then a subsequence converges strongly in $H^1$ to a $C^{1,\gamma}$ function satisfying the same boundary conditions. This will allow us to go from the minimizing sequence $u_n$ for $\cGo$ to a minimizing sequence for $\Hlo$ which has to converge to a minimizer $\varphi$. This is then used to construct a minimizer $u$ for $\cGo.$ Higher regularity of $u$ follows easily from the ellipticity of the equation satisfied for $u.$ This general outline is identical to that of \cite{ST1}.\\
 The study of minimizers of $\Hlo$ has two steps: existence and regularity. The regularity argument in \cite{ST1} is a clever argument using elliptic theory. It involves thinking of the minimizer whose existence is guaranteed by the existence part as a radial function on a higher dimensional sphere. This allows one to use better Sobolev embeddings which combined with elliptic regularity yield the desired regularity. Our proof follows the same guideline, but keeps track of the error resulting from having $f(r)$ instead of $\sin\alpha$ in the metric.\\
 For the existence, suppose $\{\varphi_n\}$ is a sequence of minimizers for $\Hlo.$ A standard argument shows that there is a $\varphi$ such that along some subsequence $\{\varphi_n\}$ converges uniformly on compact subintervals and weakly in $H^1$ to $\varphi.$ \
There are three possibilities for the limit $\varphi$:\\
(i)$\varphi(0)=0$ and $\varphi(R)=H$\\
(ii)$\varphi \equiv 0$ or $\varphi \equiv H$\\
(iii)$\varphi(0) = \varphi(R)=0$ but $\varphi$ is not constantly $0$ or the same statement with $0$ replaced by $H$.\\
To establish existence a concentration compactness argument is needed to eliminate the last two cases. If we were in case (ii) then the lower bound for $\cGo$ would have to be the same as the case where $\omega=0$ which corresponds to harmonic maps. This yields a contradiction, since a direct computation shows that the value of $\cGo$ at a harmonic map is strictly less than the $\omega=0$ case. The existence of harmonic maps for the case where the spheres are equipped with general metrics is shown in \cite{L} (see section 2 for more precise references). Eliminating case (iii) is more involved, and the most important difference between \cite{ST1} and the current work in the elliptic section is in this argument. In \cite{ST1} this is done by showing monotonicity of the members of a minimizing sequence which is in contradiction with case (iii). The reason the arguments from \cite{ST1} do not carry over is that $f(r)$ may have many more oscillations in its domain of definition that $\sin \alpha.$ We eliminate case (iii) in lemma 2, by modifying each $\varphi_n$ in the minimizing sequence in a way that $\Hlo(\varphi_n)$ is reduced by an amount independent of $n,$ contradicting the minimizing property of the sequence.\\\\

Section 3: Holder Continuity.\\\\

Here we prove Holder continuity. Local existence is standard, and due to symmetry it suffices to establish uniform Holder bounds on the local solution near $(t,r)=(0,0).$ We can therefore restrict attention to a neighborhood of one of the poles (the data here is given at $t=-1$). In \cite{ST1} this is done by using the conformal Penrose transform to convert the local problem into a wave equation in the Minkowski space. The fundamental solution derived in an earlier work of Chrisrodoulou and Tahvildar-Zadeh \cite{CT} for the flat radial wave operator is then used to find a convenient integral representation for the local solution, which is transformed back using the inverse Penrose transform. Combining the estimates on the fundamental solution from \cite{CT} with similar arguments to those in \cite{ST2}, Shatah and Tahvildar-Zadeh then prove estimates on the derivatives of the solution which yield Holder continuity. Specifically let $\eta=t+r$ and $\xi=t-r$ denote the characteristic coordinates. With $e$ and $m$ denoting the energy and the momentum respectively, define
\begin{align*}
\A^2 := f(r)(e+m)= \frac{f}{2}(|\partial_\eta u|^2 + \frac{l^2}{f^2}|Au|^2),\\
\B^2 := f(r)(e-m)= \frac{f}{2}(|\partial_\xi u|^2 + \frac{l^2}{f^2}|Au|^2).
\end{align*}
For $\tilde{t} \leq 0$ and a fixed $\delta \in (0,1/2)$ define
\begin{align*}
&Z(\tilde{t}) = \{(t^\p,r^\p)| -1 \leq t^\p \leq \tilde{t},~0\leq r^\p \leq \tilde{t} -t^\p\},\\
&\X(\tilde{t}) = \sup_{Z(\tilde{t})}\big\{(f(r))^{(\frac{1}{2}-\delta)}\A(t,r)\big\},\\
&\X = \X(0).
\end{align*}
$f(r)$ is replaced by $\sin\alpha$ in \cite{ST1}. Shatah and Tahvildar-Zadeh show that $\X$ is bounded if the bound $\epsilon$ on energy is sufficiently small. This is done by proving an estimate of the form $\X\lesssim 1+\epsilon\X.$ Deducing Holder bounds from the boundedness of $\X$ is not very difficult. We follow the same outline, but use different techniques to estimate $\partial_\eta u.$ Our point of deviation is that we no longer have the Penrose transform at our disposal. Instead, as suggested in \cite{ST1}, we use the fundamental solution for the wave equation on a curved background derived in \cite{F} to get the following representation for the solution $u$ near a pole
\begin{align*}
\int_{\xi}^{\eta}\int_{-2-\eta^{'}}^{\xi}\int_{\{\theta^{'}|(t-t^{'})^2 \geq d^2((r,\theta),(r^{'},\theta^{'}))\}}\frac{w^{+}Q(U)f(r^{'})}{\sqrt{(t-t^{'})^2- d^2((r,\theta),(r^{'},\theta^{'}))}}d\theta^{'}d\xi^\p d\eta^\p .
\end{align*}
Here $Q$ denotes the nonlinearity, $w^{+}$ is a smooth function whose existence is guaranteed in \cite{F}, and $d$ denotes the geodesic distance function on the domain. Note that the domain of integration is just the backward light cone of the new metric. Our arguments will be more similar to those of \cite{CT} than \cite{ST1}. The reason is that after the Penrose transform the wave equation in \cite{ST1} takes the form $\Box u +\frac{1}{r^2}u=nonlinearity.$ The fundamental solution derived in \cite{CT} for $\Box +\frac{1}{r^2}$ behaves slightly differently from that of $\Box.$ In particular for the latter the backward light cone is divided into two regions $K_1: |t-t^\p|\leq|r+r^\p|$ and $K_2:|t-t^\p|\geq|r+r^\p|$ to obtain optimal estimates. This division is present in our work also, but does not appear in \cite{ST1}. The main difficulty for us is that we do not have a simple law of cosines for our geometry and the singularity $((t-t^\p)^2-d^2)^{-1/2}$ is therefore more complicated to deal with. Our basic idea is to use triangle comparison theorems to compare $d$ with its flat and spherical counterparts. This will not always be easy as we have to consider derivatives of the singularity too. Furthermore the extra $\theta$ dependency prevents some of the arguments in \cite{CT} from being directly generalizable to our case. We can divide the results of this section into two parts:\\\\
a) Estimates on the distance function and its derivatives, and geometric comparison theorems: these consist of the last 7 lemmas of the section (lemmas 2-8) and corollary 1.\\\\
b) Adapting the arguments in \cite{CT} and \cite{ST1} to our setting, using the previous set of results: these include lemma 1 and the arguments in the main body of the work between the lemmas.\\\\
In order to motivate the results we have not made the above separation explicit in the actual write up. The lemmas are stated and proved when they arise in the context of results of type b. However, the reader can read lemmas 2-7 independently of the rest of the work, or skip their proofs and concentrate on the rest. As mentioned earlier the main ingredients in the proofs of our geometric lemmas are triangle comparison theorems. The most difficult of these is perhaps lemma 7 in that it uses all previous lemmas in its proof.\\
The rest of the argument goes as follows. We want an estimate of the form $|\partial_\eta u|\lesssim \epsilon r^{\delta-1}\X,$ so we differentiate the integral representation for $u$ to get
\begin{align*}
&\partial_\eta u = \\
&\lim_{\eta^{'} \rightarrow \eta} \int_{-2-\eta^{'}}^{\xi}\int_{\{\theta^{'}|(t-t^{'})^2 \geq d^2((r,\theta),(r^{'},\theta^{'}))\}}\frac{w^{+}Q(U)f(r^{'})}{\sqrt{(t-t^{'})^2- d^2((r,\theta),(r^{'},\theta^{'}))}}d\theta^{'}d\xi^\p +\\
& \int_{\xi}^{\eta}\int_{-2-\eta^\prime}^{\xi} \partial_\eta[\int_{\{\theta^{'}|(t-t^{'})^2 \geq d^2((r,\theta),(r^{'},\theta^{'}))\}}\frac{w^+Q(U)f(r^{'})}{\sqrt{(t-t^{'})^2- d^2((r,\theta),(r^{'},\theta^{'}))}}d\theta^{'}]d\xi^\p d\eta^\p\\
&=:I+II.
\end{align*}
The most important step in bounding the boundary term $I$ is lemma 1, which in turn relies on the geometric lemmas 2 and 3. This is treated somewhat differently in the flat case \cite{CT}, due to the existence of the law of cosines. Bounding $II$ is more difficult and we need to divide $II$ as $II_1 + II_2$ by splitting the region of integration into $K_1$ and $K_2.$ $II_1$ is more difficult to deal with. The method is similar to that of \cite{CT} but modifications are needed in several cases. The most important is the section between the end of the proof of lemma 4 and the beginning of lemma 7. The new geometric lemmas that are needed for bounding $II_1$ are lemmas 4 to 7.\\
Bounding $II_2$ is simpler because the region of integration is fixed (independent of $\eta$) and therefore the comparison lemma 8 allows us to use the results from the flat case with minor modifications.\\
The smallness factor comes from the important pointwise bound $|Q|\lesssim\frac{\A\B}{f(r)}$ on the nonlinearity, combined with a energy-flux identity which allows us to extract a smallness factor from the terms involving $\B.$\\\\

Section 4: Higher Regularity\\\\

We establish higher regularity for the Holder solution of the previous section. There are two possible approaches, both of which are based on differentiating the equation for $u$. One is to proceed as in \cite{CT} to find second derivative estimates on the fundamental solution and repeat the argument of the previous section with $u$ replaced by $v=u_r.$ This would be difficult for us because of the complicated representation of the fundamental solution in our case. The reason one would need to consider the second derivative of the fundamental solution, is that unlike for $u,$ the $\xi-$derivative of $v$ is not well behaved. In fact, in the previous section we use a flux-energy identity to deal with $u_\xi$ which gives rise to the smallness factor $\epsilon.$ Since the energy of $v$ is no longer bounded, we need to use an integration by parts to move one derivative (the $r-$derivative) from $v_\xi=u_{r\xi}$ to the fundamental solution. This proves to be tedious even in the flat case \cite{CT}. Instead we resort to an alternative method, which to the best of my knowledge was first used by Shatah and Struwe in \cite{SS1}. This consists of differentiating the equation for $u$ covariantly with respect to $r.$ The point is that when we differentiate covariantly the extra factor of $v_\xi$ no longer appears in the nonlinearity, and therefore we can use the estimates from the previous section to obtain Holder bounds on the gradient of $u,$ proving higher regularity.\\
There is, however, one more complication. The method just outlined works for the radial case, but the extra $\theta$ dependency makes the singularity worse at $r=0.$ This is easy to see if we consider the flat equation. If there is no $\theta$ dependency we have
\begin{align*}
u_{tt}-u_{rr}-\frac{1}{r}u_r=~nonlinearity.
\end{align*}
Differentiating and letting $v=u_r$ we get
\begin{align*}
v_{tt}-v_{rr}-\frac{1}{r}v_r +\frac{1}{r^2}v=~new~nonlinearity,
\end{align*}
which we can deal with. If there is a $\theta$ dependency however, the equation becomes
\begin{align*}
 e^{Al\theta}\Big(u_{tt}-u_{rr}-\frac{1}{r}u_r -\frac{l^2A^2u}{r^2}\Big)=~non-linearity.
\end{align*}
Differentiating this and letting $V=U_r=e^{Al\theta}u_r$ we get
\begin{align*}
\Box v +\frac{1}{r^2}V+\frac{2lA^2U}{r^3}= new~nonlinearity,
\end{align*}
and there is an extra negative power of $r$ in the last term on the left hand side. A careful examination of the arguments in \cite{CT} and \cite{SS1} shows that if we let $v=u_r,$ this singularity is too strong to allow us to deduce Holder estimates. What comes to our rescue is the following simple intertwining relation
\begin{align*}
\big[\partial_r+\frac{l}{r}\big]\big[\partial_r^2+\frac{1}{r}\partial_r-\frac{l^2}{r^2}\big]=\big[\partial_r^2+\frac{1}{r}\partial_r-\frac{(l-1)^2}{r^2}\big]\big[\partial_r+\frac{l}{r}\big].
\end{align*}
We can apply the operator $\partial_r+\frac{l}{r}$ to the equation and let $w=u_r+\frac{lu}{r}.$ We can then apply the arguments outlined above to obtain Holder estimates on $w$ and on the gradient of $u$ from there. Of course in our case we will need to replace $r$ by $f(r)$ which make the intertwining relation more complicated. We will also need a similar intertwining relation for the covariant derivative to be able to use the ideas of \cite{SS1}.\\
Finally we note again that this is the only section where we assume the target to be the standard sphere rather than a surface of revolution diffeomorphic to the standard $S^2.$ To deal with one of the terms arising after applying the covariant differentiation operator, we use the explicit formulas for the Christoffel symbols of the sphere. We expect a similar argument to work in the general case too. 
\section{Existence of Stationary Wave Maps}
\begin{center}

\end{center}

\vspace*{0.5cm}

Our goal is to prove existence and stability of smooth stationary wave maps $U: M \times \RR \rightarrow S^{2}$ where the domain sphere is equipped with a meteric supporting an effective action of $SO(2)$ by isometries, and the target sphere is isometerically embedded in $\RR^{3}.$ $M$ is a surface of revolution homeomorphic to $S^2$ with positive curvature near the poles. We assume the existence of polar coordinates $(r,\theta) \in (0,R) \times S^{1}$ on $M - \{p,q\},$ where $p$ and $q$ are the two poles. The curvature is assumed to be positive near the poles. The metric in these coordinates is given by $dr^{2} + f^{2}(r)d\theta^2.$ It follows from the computation of the Gaussian curvature that $f$ satisfies the following expansion (c.f. \cite{dC} p. 292)
\begin{align} \label{metric}
f(r) = r - \frac{k(r)}{6}r^3 + Y(r)
\end{align}
with
\begin{align*}
\lim_{r\rightarrow 0}\frac{Y}{r^3} = 0.
\end{align*}
Here $k$ is the Gaussian curvature. Similarly
\[f(R-r) = r - \frac{k(R-r)}{6}r^3 + X(r)\]
with
\[\lim_{r \rightarrow 0}\frac{X(r)}{r^3} = 0.\]

We now proceed as in \cite{ST1}, to prove the existence of stationary wave maps which satisfy a certain equivariance hypothesis. The expansion (\ref{metric}) will allow us to use the arguments in \cite{ST1} with minor modifications. The stationary (time equivariance) and symmetry (space equivariance) ansatze are respectively:
\begin{align}\label{time}
U(t,x) = e^{A\omega t}u(x)
\end{align}

\begin{align}\label{space}
u(r,\te) =\te^{l}\cdot u(r,1)
\end{align}

where $ u: S^2 \rightarrow S^2,~A$ is the matrix $\displaystyle  \bigl(\begin{smallmatrix} 0 & -1 & 0 \\ 1 & 0 & 0 \\ 0 & 0 & 0\end{smallmatrix}\bigr),~\omega$ is a fixed number in $\RR_{>0}$ and $l$ a fixed number in $\{1,2,3,\cdots \}.$
\begin{remark}
We assume that the action of $SO(2)$ on the target is given by the standard embedding of $SO(2)$ in $SO(3).$ Therefore in local coordinates equation (\ref{space}) reads $u(e^{As}x) = e^{Als}u(x).$ Equivariant functions are precisely the fixed point of the action of $SO(2)$ given by $(gv)(x):= g^{-l}v(gx).$
\end{remark}
Wave maps are critical points of the functional
\begin{align*}
\mathcal{Q}(U) = \frac{1}{2}\int_{S^2\times\RR}(-|U_t|^2 + |\nabla U|^2)dvol_{S^2\times\RR},
\end{align*}
and satisfy the Euler-Lagrange equations:
\begin{align}\label{UEL}
\Box U + (|U_t|^2 - |\nabla U|^2)U = 0.
\end{align}
In local coordinates $|\nabla U|^2 = |U_r|^2 + \frac{1}{f^2}|U_\te|^2$ and $\Box = {\partial_t}^2 - {\Delta}_{S^2} = {\partial_t}^2 -{\partial_r}^2 - \frac{f^{'}}{f}\partial_r -\frac{1}{f^2}{\partial_\te}^2.$\\
If the target is a surface of revolution homeomorphic to a sphere and gotten by rotating $c(\alpha):=(x,z)=(g(\alpha),h(\alpha)), ~\alpha \in [0,H],$ about the $z-$axis, the equation satisfied by $U$ becomes:
\begin{align}\label{UELgen}
\Box U = B(U)(DU,DU) := \sum_{i,j} \gamma^{ij}B(U)(\partial_iU,\partial_jU),
\end{align}
where B denotes the second fundamental form of the target and $DU$ is the total derivative of U. If $c$ is parametrized by arclength, the metric on the target in polar coordinates is $d\alpha^2 + g^{2}d\beta^2.$ As before $g$ satisfies equations similar to those satisfied by $f:$
\begin{align*}
&g(\alpha) = \alpha - \frac{K(\alpha)}{6}\alpha^3 + o(\alpha^3),\\
&g(H-\alpha) = \alpha - \frac{K(H-\alpha)}{6}\alpha^3 + o(\alpha^3),
\end{align*}
where $K$ denotes the Gaussian curvature function on the target.
Going back to the case where the target is the standard sphere, the stationary ansatz (\ref{time}) implies that $u$ satisfies
\begin{align}\label{uEL}
\Delta_{S^2}u + |\nabla u|^2u = \omega^2(A^2u + |Au|^2u).
\end{align}
The functional $\mathcal{Q}$ also reduces to the following functional defined on $H^1(S^2;S^2)$ whose critical points satisfy the Euler-Lagrange equation (\ref{uEL}):
\[\cG_\omega(u) = \frac{1}{2}\int_{S^2}(|\nabla u|^2 - \omega^2| Au|^2)dvol_{S^2}.\]
In the case of a general symmetric metric on the target this functional yields the Euler-Lagrange equations:
\begin{align}\label{uELgen}
\Delta_{S^2}u  = B(u)(\nabla u, \nabla u) + \omega^2(A^2u - < A^2u, \nu(u)>\nu(u)),
\end{align}
where $\nu$ denotes the unit normal vector to the target sphere. This equation can also be obtained by inserting the stationary ansatz (\ref{time}) into (\ref{UELgen}) (for further explanation see for instance \cite{St3}, p.233).
\begin{remark}
$\cGo$ is invariant under the action of $SO(2),$ in the sense that $\cGo(g(u)) = \cGo(u).$ This allows us to use equivariant variations to compute the Euler-Lagrange equations.
\end{remark}

The following lemma holds for general metrics on both the domain and the target spheres:
\begin{lemma}
Suppose $u \in H^1(S^2,S^2)$ satisfies the symmetry (\ref{space}) a.e.. Then with $N$ and $S$ denoting the north and the south poles on the target sphere we have:\\
(i) $u$ is Holder continuous away from the poles, that is $u \in C^{1/2}(V, \RR^3)$ for any $V\subset\subset S^2 - \{p,q\}$ and $u$ can be extended to a continuous map on all of $S^2$ sending $p$ and $q$ to poles. The Holder constant depends only on $V$ and the $H^1$ norm of $u.$\\
(ii) with $C^{N,S} := u^{-1}(\{N,S\}),$ there exists a bounded continuous function $\varphi: [0,R] \rightarrow [0,H],$ and a function $\zeta: (0,R) \rightarrow S^1 \subseteq \mathbb{C}$ continuous except possibly on the projection of $C^{N,S}$ on the first factor (in the $(r,\te)$ coordinates), so that $u$ is given by
\begin{equation}\label{phirep}
u(r,\te) = (\varphi(r),\zeta(r)\te^l).
\end{equation}
Moreover $deg~u = \pm l$ or $0,$ and if the degree is $l,~\varphi$ can be chosen so that $\varphi(R) = H$ and $\varphi(0) = 0.$
\end{lemma}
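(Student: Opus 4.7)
My plan is to use the equivariance (\ref{space}) to reduce to a one-dimensional problem: set $v(r):=u(r,1)$, so $u(r,\te)=e^{Al\te}v(r)$ for almost every $(r,\te)$. Because $e^{Al\te}$ is orthogonal and commutes with $A$, one computes $|u_r|^2=|v'(r)|^2$ and $|u_\te|^2=l^2|Av(r)|^2$, so that
\begin{equation*}
\tfrac{1}{2}\int_0^R\int_0^{2\pi}\Bigl(|u_r|^2+\tfrac{1}{f^2}|u_\te|^2\Bigr)f\,d\te\,dr = \pi\int_0^R\Bigl(|v'(r)|^2 f(r)+\frac{l^2|Av(r)|^2}{f(r)}\Bigr)\,dr,
\end{equation*}
and finite Dirichlet energy yields (a) $|v'|^2 f\in L^1(0,R)$ and (b) $|Av|^2/f\in L^1(0,R)$. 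On any compact subinterval $[\epsilon,R-\epsilon]\subset(0,R)$ the weight $f$ is bounded below, so (a) gives $v\in H^1([\epsilon,R-\epsilon];\RR^3)$ with norm controlled by $\epsilon$ and $\|u\|_{H^1}$; the one-dimensional embedding $H^1\hookrightarrow C^{1/2}$ then delivers the stated Hölder bound for $v$, and hence for $u$, on any $V\subset\subset S^2-\{p,q\}$.

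For the continuous extension at the poles it suffices to show $|Av(r)|\to 0$ as $r\to 0$ (the case $r\to R$ being symmetric), since the fixed points of the $SO(2)$-action on $\RR^3$ are exactly $\{N,S\}$. Using $|Av|^2=v_1^2+v_2^2$ on the unit sphere gives $\bigl|\tfrac{d}{dr}|Av|\bigr|\le|v'|$ pointwise. Suppose for contradiction $|Av(r_n)|\ge\delta$ for some $r_n\to 0$. By (\ref{metric}), $f(r)\sim r$ near $0$, so Cauchy--Schwarz applied to (a) on $[r_n,(1+c)r_n]$ gives
\begin{equation*}
\int_{r_n}^{(1+c)r_n}|v'|\,dr\le\bigl(\log(1+c)\bigr)^{1/2}\Bigl(\int_{r_n}^{(1+c)r_n}|v'|^2 f\,dr\Bigr)^{1/2}\longrightarrow 0
\end{equation*}
as $n\to\infty$ for any fixed small $c$, so $|Av|\ge\delta/2$ on $[r_n,(1+c)r_n]$ for all large $n$. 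Passing to a subsequence making these intervals disjoint, each contributes at least a fixed positive amount to the integral in (b) (since $\int_{r_n}^{(1+c)r_n}dr/f$ stays bounded below, again by $f\sim r$), contradicting (b). A parallel argument rules out $v$ oscillating between neighborhoods of $N$ and $S$, so $v(r)$ has a well-defined limit in $\{N,S\}$.

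For (ii), use polar coordinates $(\alpha,\beta)\in[0,H]\times S^1$ on the target and set $\varphi(r):=\alpha(v(r))$, which is continuous on $[0,R]$ with $\varphi(0),\varphi(R)\in\{0,H\}$ by (i), and $\zeta(r):=e^{i\beta(v(r))}\in S^1$, continuous off the projection of $C^{N,S}$. Since the target $SO(2)$-action sends $(\alpha,\beta)$ to $(\alpha,\beta+l\te)$, (\ref{space}) gives $u(r,\te)=(\varphi(r),\zeta(r)\te^l)$ immediately. Pulling back the target area form, $u^*(g(\alpha)\,d\alpha\wedge d\beta)=l\,g(\varphi(r))\varphi'(r)\,dr\wedge d\te$, so
\begin{equation*}
\deg u = l\cdot\frac{G(\varphi(R))-G(\varphi(0))}{G(H)},\qquad G(t):=\int_0^t g(s)\,ds.
\end{equation*}
With $\varphi(0),\varphi(R)\in\{0,H\}$ and $G(0)=0$, the four boundary cases yield $\deg u\in\{l,-l,0\}$; the value $\deg u=l$ occurs precisely when $\varphi(0)=0$ and $\varphi(R)=H$, which pins down the labeling of $N$ and $S$.

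The principal obstacle is the pole-extension step: converting the weighted integrability (b), where $1/f\sim 1/r$ is not integrable at $r=0$, into the pointwise limit $|Av(r)|\to 0$, and simultaneously ruling out oscillation between the two fixed points. The remaining steps reduce to one-dimensional Sobolev calculus and a direct pullback computation of the area form.
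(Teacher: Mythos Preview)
Your proposal follows essentially the same route as the paper: reduce to the one-variable profile $v(r)=u(r,1)$, use the weighted integrals coming from the Dirichlet energy to get $C^{1/2}$ away from the poles, force $|Av|\to 0$ at the endpoints, and compute the degree via the pullback of the area form. The only substantive stylistic difference is at the pole-extension step: the paper argues directly that $|Au(r+h)|^2-|Au(r)|^2 = 2\int_r^{r+h}\langle Au, Au_\rho\rangle\,d\rho \le \int_r^{r+h}|\nabla u|^2 f\,d\rho = o(1)$ uniformly in $r$, which immediately upgrades subsequential convergence to full convergence, whereas you run a contradiction via disjoint dyadic intervals contributing a fixed amount to $\int |Av|^2/f$. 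Both are fine; the paper's version is a line shorter and avoids the subsequence bookkeeping. One point you skate over that the paper addresses explicitly: the pullback formula for the degree is written for a smooth map, but $u$ is only $H^1$; the paper fills this by showing equivariant $H^1$ maps can be approximated in $H^1$ by smooth equivariant maps (averaging a smooth approximant over the $SO(2)$-action), so the degree formula passes to the limit. You should at least flag this approximation step.
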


\begin{proof}
We use $\rho$ to denote the embedding of $SO(2) \cong S^1$ in $SO(3)$ given in local coordinates by $e^{As}.$ With this notation (\ref{space}) reads $u(r,\te) = \rho(\te)^lu(r,1),$ and since $\rho$ is Lipschitz we get $|u(r,\te_1) - u(r,\te_2)| \lesssim |\te_1-\te_2|.$ Also note that using the local representation for the metric on the domain sphere we have
\begin{align*}
&\|u\|^2_{L^2} = \int_0^{2\pi}\int_0^R|u(r,s)|^2f(r) dr ds = \int_0^{2\pi}\int_0^R|e^{Als}u(r,0)|^2f(r) dr ds\\
& = 2\pi \int_0^R|u(r,0)|^2f(r) dr.
\end{align*}
For $(r_j,\te_j)$ two points on $S^2$ we get
\begin{align*}
&|u(r_1,\te_1)-u(r_2,\te_2)| \leq  |u(r_2,\te_1)-u(r_2,\te_2)|+ |u(r_1,\te_1)-u(r_2,\te_1)|\\
&\lesssim |\te_1-\te_2| +|\int_{r_1}^{r_2} \pdbyd{r}u(r,\te_1)dr|\\
&\leq |\te_1-\te_2|+ \left(\int_{r_1}^{r_2} |\pdbyd{r}u(r,\te_1)|^2f(r) dr \right)^{1/2} \left (\int_{r_1}^{r_2}\frac{1}{|f(r)|} dr \right)^{1/2}\\
& \leq |\te_1-\te_2|+ \|u\|_{H^1}\{\min(|f(r_j)|)\}^{-1/2}|r_1-r_2|^{1/2}. %here's the first error
\end{align*}
This establishes Holder continuity away from the poles. (To be precise, given $u$ we approximate it by a sequence $\{u_n\}$ of smooth functions which satisfy the above estimates.  These estimates allow us to conclude by Arzela-Ascoli that $\{u_n\}$ converges to a continuous function which by uniqueness of limits has to be the same as $u$ almost everywhere. We can then pass to the limits and conclude that the estimates are satisfied by $u$ as well.)\\
Now the equivariance of $u$ implies that any orbital circle (i.e. $\{\te \cdot x | \te \in SO(2)\}$) in the domain sphere is sent to a horizontal circle in the target sphere.  Therefore to show that $u$ can be continuously extended to a map sending poles to poles we need to show that $|u_1|^2+|u_2|^2 = |Au|^2 \rightarrow 0$ uniformly as $r \rightarrow 0,R$ and that $u_3$ is continuous. Again by equivariance of $u$ we have
\begin{align*}
&|\nabla u(r,s)|^2 = |\nabla e^{Als}u(r,0)|^2 = |\frac{lAe^{Als}u(r,0)}{f(r)}|^2 + |e^{Als}\pdbyd{r}u(r,0)|^2\\
&= \frac{l^2}{f^2(r)}|Au|^2 + |\pdbyd{r}u|^2.
\end{align*}
Since $\int_0^R|\nabla u|^2f(r)dr$ is finite and $\frac{l}{f(r)} \rightarrow \infty$ as $r \rightarrow 0,R,~|Au|^2$ must go to zero at least along some sequences $\{r_j^{1,2}\}$ going to $0$ and $R.$  Therefore to show that $|Au|^2 \rightarrow 0$ as $r \rightarrow 0,~R$ it suffices to prove $|Au(r +h,\te)|^2-|Au(r,\te)|^2 \rightarrow 0$ as $h \rightarrow 0:$
\begin{align*}
&|Au(r +h,\te)|^2-|Au(r,\te)|^2 = \int_r^{r + h}\pdbyd{\rho}|Au(\rho,\te)|^2 d\rho\\
&= 2\int_r^{r + h} \left < Au(\rho,\te),A\frac{\partial u}{\partial \rho}(\rho,\te) \right > d\rho\\
&\leq \int_r^{r + h}\left(\frac{|Au(\rho,\te)|^2}{f(\rho)} + |\frac{\partial u}{\partial\rho}|^2f(\rho) \right) d\rho \lesssim \int_r^{r + h}|\nabla u|^2f(\rho)d\rho = o(1).
\end{align*}
Continuity of $u_3$ follows from that of $u$:
\[|u_3(r+h,\te)- u_3(r,\te)|\leq |u(r+h,\te)- u(r,\te)| \rightarrow 0~\mathrm{as}~ h\rightarrow 0.~\]
For part $(ii)$ note that the equivariance of $u$ implies that for $(r,\te) \in ((0,R) \times S^1)-(C^{S} \cup C^{N}))$ we can write $u(r,\te) = \rho(\te)^lu(r,1) = (u_1(r,1), \rho(\te)^l \cdot u_2(r,1))$ (here $u_1$ and $u_2$ correspond to the angular coordinates $(\alpha,\beta)$ on the target, whereas up to here $u_1,~u_2,$ and $u_3$ corresponded to the standard coordinates in $\RR^3$). Therefore if we define $\varphi(r):= u_1(r,1)$ and $\zeta(r) := u_2(r,1)$ for $r \in \pi_1((0,R) \times S^1)-(C^{S} \cup C^{N}))$ and extend $\varphi$ to $\pi_1(C^{S} \cup C^{N})$ by setting $\varphi(\pi_1(C^{S})) = H$ and $\varphi(\pi_1(C^{N})) = 0$ we get the desired representation (\ref{phirep}).\\
To compute the degree of u, assume for the moment that $u$ is smooth and that $C^{N,S}$ have zero measures. Away from $C^{N,S}$ and in the $(r,\te)$ coordinates we have $u(r,\te) = (\varphi(r),\zeta(r)\cdot\te^{l}),$ so
\[u^*(dvol) = u^*(g(t)  dt d\beta) = lg(\varphi(r))\frac{d\varphi}{dr}drd\te.\]
It follows that
\begin{align*}
&deg(u) = \frac{1}{vol(S^2_{target})}\int_0^{2\pi} \int_0^{R} u^*dvol = \frac{1}{vol(S^2_{target})}\int_0^{2\pi}\int_0^{R}lg(\varphi(r))\frac{d\varphi}{dr}drd\te\\
&=\frac{2\pi l}{vol(S^2_{target})}[G(\varphi(R)) - G(\varphi(0))],
\end{align*}
where $G$ is an antiderivative for $g.$ But
\begin{align*}
&vol(S^2_{target}) = \int_0^{2\pi}\int_0^Hg(t)dtd\beta = 2\pi[G(H) - G(0)]
\end{align*}
and it follows that $deg(u)=\pm l$ or $0,$ depending on the images of $p$ and $q.$\\
For the general case we show that the subset of equivariant maps for which $C^{N,S}$ has zero measure is dense in the $H^1$ topology. For this, given a map $\varphi:[0,R]\rightarrow [0,H]$ and a small $\epsilon$ we need to find another map, $\psi:[0,R]\rightarrow[0,H]$ which satisfies the zero measure property above, and is $\epsilon$ close to $\varphi.$ We do this for the case where $\varphi(0)=0$ and $\varphi(R)=H,$ the other cases being similar. Let $\delta:[0,R] \rightarrow [0,H]$ be a smooth map which satisfies, $\delta(0)=0, ~\delta(R)=H, ~\delta(r)\leq \frac{H-\varphi(r)}{2},$ and $\delta(r)=\varphi(r)=0$ implies $r=0.$ We can now define $\psi$ as $\psi(r)=(1-\epsilon(R-r))\varphi(r)+\epsilon\delta(r).$ It follows that $\psi(0)=0$ and $\psi(R)=H,$ and that by making $\epsilon$ small $\psi$ can be made arbitrarily close to $\varphi.$ If for some $r,~ \psi(r)=0$ then we must have $\delta(r)=\varphi(r)=0$ which implies $r=0.$ If $\psi(r)=H,$ then we have $\epsilon\delta(r)=H-(1-(R-r))\varphi(r).$ But the left hand side of this equation is bounded above by $\epsilon\varphi(r)/2$ which implies a contradiction unless $r=R.$\\
%If $C^{N,S}$ do not have zero measure then the equivariance assumption implies that the part with nonzero measure of each of them consists of a disjoint union of strips in the domain sphere (there may be isolated circles in the measure zero part). Since $u$ is constant on each strip, $u^*dvol$ vanishes there and therefore if the $j^{th}$ strip is between $r = a_j$ and $r = b_j$ the degree of $u$ is
%\begin{align*}
%&\frac{2\pi}{vol(S^2_{target})}\sum_j\int_{b_j}^{a_{j+1}}g(\varphi)\frac{d\varphi}{dr}dr\\
%&\frac{2\pi}{vol(S^2_{target})}\sum_j(G(\varphi(a_{j+1})) -G(\varphi(b_j))) = \pm l~ \mathrm{or}~0
%\end{align*}
%where the last equality comes from the fact that $\varphi(a_j) = \varphi(b_j).$
Finally we relax the smoothness assumption on $u$ as follows. Note that being equivariant is the same thing as being invariant under a certain group action. Now given an equivariant $u,$ we approximate it by a sequence of smooth functions $u_n$. If we define $v_n := \frac{1}{vol~G}\int_Gg(u_n) dg$ then $v_n$ is smooth and invariant and since $u$ is also invariant we have
\begin{align*}
&\|u -v_n\|_{H^1} = \frac{1}{vol~G}\| \int_G g(u) - g(u_n) dg \|_{H^1}\\
&\leq \frac{1}{vol~G} \int_G \|g(u) - g(u_n)\|_{H^1} dg \rightarrow 0.
\end{align*}
This shows that we can approximate $u$ by smooth equivariant functions, and by the definition of the degree of an $H^1$ map we can conclude that our computations above are valid for any such map.\footnote{That the degree is $0,~l$or $-l$ can be seen topologically too. We present the argument for the case when $u$ sends $p$ to the north pole. The equivariance assumption implies that $u$ sends orbits (circles parallel to the equatorial circle) to orbits. Since we are assuming $u$ to be smooth we can find a regular value $P$ for it. Let $C$ be the orbit containing $p.$ Let $P_1$ be a point of highest altitude going to $P$ and let $C_1$ be the orbit containing it($P_1$ exists because $P$ is a regular value and therefore its pre-image is a finite number of points). Let $S$ and $S_1$ be small enough open squares around $P$ and $P_1$ respectively (with two sides parallel and two sides perpendicular to the equator), so that $u$ is a diffeomorphism from $S_1$ to $S.$ Since the north pole goes to the north pole, and $P_1$ is the closest point to the north pole that is mapped to $P,$ the upper edge of $S_1$ gets mapped to the upper edge of $S.$ Moreover by the regularity of $P$, possibly after making $S_1$ smaller, the lower edge of $S_1$ goes to the lower edge of of $S$. Otherwise the image of $S_1$ would have to fold on itself along the circle going through $P,$ so the image of $S_1$ would be a half-closed square, contradicting the fact that $u$ is a local diffeomorphism. This implies that the local degree of $u$ restricted to $S_1$ is one. Repeating the same argument for the other $l-1$ point on $C_1$ which get mapped to $P,$ we get a contribution of $l$ to the degree of $u.$ Now if there are any other points south of $P_1$ that go to $P,$ let $Q_1$ be one such point of highest altitude. By the same reasoning as above we see that a small square $T_1$ around $Q_1$ goes to a small square $T$ around $P,$ except that this time the southern edge of $T_1$ goes to the the northern edge of $T$ and vice versa. This means that the contribution of the $l$ pre-images of $P$ on the orbit through $Q_1$ to the degree of $u$ is $-l,$ making the total degree zero. Continuing in this way we conclude that the degree has to be either zero or $l.$}

\end{proof}

Define $X^0_l:=\{u\in H^1~ s.t.~u~satisfies ~(\ref{space})~a.e.\}.$ Note that if we restrict $\cGo$ to $u\in X^0_l$ of degree $l,$ the critical points still satisfy the Euler-Lagrange equations (\ref{uELgen}). The reason for this is that the degree map is continuous from $H^1$ to $\mathbb{N}$ and therefore the preimage of any integer is an open set. With this observation in mind we make the following definition\footnote{Note that $X_l$ here corresponds to $X_l^0$ and vice versa in \cite{ST1}.}
\begin{align*}
X_l := \{u \in X_l^0 ~ \mathrm{such}~ \mathrm{that}~ deg~u = l\} =\{u \in X_l^0 ~ \mathrm{such}~ \mathrm{that}~ u(p) = N,~u(q)= S\}.
\end{align*}
Since $u$ is only an $H^1$ map, to be precise we should write $\{u \in X_l^0 ~ \mathrm{such}~ \mathrm{that}~ \mathrm{if}~\mathrm{we}~\mathrm{define}~u(p) = N,~u(q)= S~\mathrm{then}~u~\mathrm{is}~\mathrm{continuous}\}.$ This is what we will always tacitly mean in the future. \\
From now on we will study the restriction of $\cGo$ to $X_{l}$ with $l>0.$\\
A standard computation shows that for $u \in X_l$
%(note: use the same argument as in the case of two standard spheres, on page $5$ of your old write up)
\[\frac{1}{2}\int_{S^2}|\nabla u|^2 dvol_{S^2}\geq lvol(S^2_{target}).\]
The right hand side is just the infimum of the functional for harmonic maps of degree $l$. It follows that
\[G^*:=\inf_{X_l}\cGo(u) \geq lvol(S^2_{target})-\sup_{u \in X_l}\frac{\omega^2}{2}\int_{S^2}|Au|^2 dvol_{S^2}.\]
Note that within each homotopy class of maps between spheres there exists a harmonic map which is an energy minimizer (see \cite{L}, page 64, theorem 8.4). It follows that the value of $\cGo$ at this function is strictly less than $lvol(S^2_{target}).$ Consequently
\[G^* < lvol(S^2_{target})\].
%$\Delta_{2l+2}(\frac{\phi}{f^l}) = \frac{l^2}{f^{l+2}}(\frac{1}{2}\sin2\phi - \phi ) + \frac{1}{f^{l}}( (\frac{l^2(1 - {f^{'}}^{2})}{f^2} - %\frac{lf^{''}}{f})\phi - \frac{\omega^2}{2}\sin2\phi  )$
We are now ready to state the main theorem of this section.
\begin{theorem}\label{main}
For fixed $l$ and $\omega$ any minimizing sequence $\{u_n\}$ in $X_l$ for $\cGo$ has a subsequence which converges strongly in $H^1$ to a smooth map in $X_l.$
\end{theorem}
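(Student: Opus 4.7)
The plan is to exploit the reduction from Lemma 1 to pass from a minimizing sequence $\{u_n\}\subset X_l$ to a minimizing sequence $\{\varphi_n\}$ for the scalar functional
\[
\Hlo(\varphi)=\pi\int_0^R\!\left[(\varphi')^{2}+\Big(\tfrac{l^2}{f^2}-\omega^2\Big)g^2(\varphi)\right]f\,dr
\]
over continuous $\varphi:[0,R]\to[0,H]$ with boundary conditions $\varphi(0)=0$, $\varphi(R)=H$, since on $X_l$ one has $\cGo(u_n)=\Hlo(\varphi_n)$ (the $\zeta$ factor contributes nothing after an inner variation since $|\zeta|=1$ may be assumed). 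First I would extract a subsequence with $\varphi_n\rightharpoonup\varphi$ weakly in $H^1((0,R);f\,dr)$ using the uniform energy bound, noting that $|\varphi_n|\le H$; the local Hölder estimates built into the space $X_l$ (Lemma 1) together with Arzel\`a--Ascoli then upgrade this to uniform convergence on every compact subinterval of $(0,R)$.

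The crucial step is to identify which of the three scenarios (i)--(iii) listed in the Detailed Outline the limit $\varphi$ falls into. Case (ii), $\varphi\equiv 0$ or $\varphi\equiv H$, is ruled out by the strict inequality $G^{*}<l\cdot\mathrm{vol}(S^2_{\mathrm{target}})$ established just above the theorem statement via the existence (from \cite{L}) of a degree-$l$ harmonic map: a constant $\varphi$ kills the $\omega^2$ contribution entirely, forcing $\liminf\Hlo(\varphi_n)\ge l\cdot\mathrm{vol}(S^2_{\mathrm{target}})>G^{*}$, a contradiction. Case (iii), where (say) $\varphi(0)=\varphi(R)=0$ but $\varphi\not\equiv 0$, is the main obstacle. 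I expect this to require the concentration/compactness argument advertised as Lemma~2 in the Detailed Outline: since $\varphi_n(R)=H$ but $\varphi_n\to\varphi$ with $\varphi(R)=0$ uniformly on compact subsets, the entire jump from near $0$ up to $H$ must occur inside a shrinking layer adjacent to $r=R$. One then surgically truncates each $\varphi_n$ on that layer, replacing it by a fixed profile that realizes the jump $0\mapsto H$ with a uniformly bounded amount of $\Hlo$-energy, and simultaneously subtracts the nontrivial bubble $\varphi\not\equiv 0$ from the interior; the resulting $\tilde\varphi_n$ still lies in the boundary class and satisfies $\Hlo(\tilde\varphi_n)\le\Hlo(\varphi_n)-\delta$ for a fixed $\delta>0$, contradicting the minimizing property.

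With $\varphi$ known to satisfy $\varphi(0)=0$, $\varphi(R)=H$, weak lower semicontinuity of the gradient term together with strong convergence (by Rellich on $[\epsilon,R-\epsilon]$ and pointwise bounds $|g^2(\varphi_n)|\le H^2$ absorbed by dominated convergence near the poles, where the weight $l^2/f^2$ blows up but $g^2(\varphi_n)$ vanishes quadratically in $\varphi_n$, which in turn is controlled by Hölder continuity forcing $\varphi_n\to 0$ at the poles) yields $\Hlo(\varphi)\le\liminf\Hlo(\varphi_n)$, so $\varphi$ is a minimizer. Since $\Hlo(\varphi_n)\to\Hlo(\varphi)$ and the potential term converges by the same reasoning, the weighted $L^2$ norms of $\varphi_n'$ converge, which combined with weak convergence upgrades to strong convergence in $H^1$, whence $u_n\to u$ strongly in $H^1(S^2,\RR^3)$. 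Finally, smoothness of $u$ follows from an elliptic bootstrap on the Euler--Lagrange equation \eqref{uELgen}: the quadratic gradient nonlinearity has the structure that permits, via the trick mentioned in the Detailed Outline of viewing the radial profile $\varphi$ as a radial function on a higher-dimensional sphere, an improved Sobolev embedding which, iterated against the ellipticity of $\Delta_{S^2}$, yields $C^\infty$ regularity.
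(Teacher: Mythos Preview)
Your overall architecture is exactly that of the paper: reduce to the one-dimensional functional $\Hlo$ via Lemma~1, extract a weak/locally-uniform limit $\varphi$, eliminate cases (ii) and (iii), upgrade to strong $H^1$ convergence, then bootstrap regularity using the higher-dimensional radial trick on the Euler--Lagrange equation. Case~(ii) you handle correctly, and the strong-convergence and regularity steps are fine in outline.

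The gap is in your mechanism for case~(iii). Your proposed surgery---replace the concentrated jump near $r=R$ by a fixed profile and ``subtract the nontrivial bubble $\varphi$ from the interior''---does not obviously yield $\Hlo(\tilde\varphi_n)\le\Hlo(\varphi_n)-\delta$. The problem is that the interior bubble $\varphi$ (with $\varphi(0)=\varphi(R)=0$) may well satisfy $\Hlo(\varphi)<0$, since it carries the negative $-\omega^2 g^2(\varphi)$ contribution without being forced to pay the degree-$l$ topological cost; in that situation removing the bubble \emph{raises} $\Hlo$. Likewise, replacing the concentrated jump by a fixed profile at best matches its cost (which is at least $l\cdot\mathrm{vol}(S^2_{\mathrm{target}})$), so there is no evident net saving. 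The paper's Lemma~2 does something quite different: after the change of variable $s=\int_{R/2}^r dt/f(t)$ it locates a point $S_n$ near which $\varphi_n$ attains a local maximum $\varphi_n(S_n)\approx\varphi(S)>0$, then a much later point $T_n$ where $\varphi_n$ returns to that same value on its final climb to $H$, and defines $\overline{\varphi_n}$ by \emph{shifting} the tail $\varphi_n|_{[T_n,\infty)}$ back to start at $S_n$. This excises the segment $[S_n,T_n]$, on which $\varphi_n$ must drop from $\varphi_n(S_n)$ down near $0$ and back up, and that drop-and-return has a gradient cost bounded below by $\frac{|\varphi_n(S_n)-\varphi_n(S_1)|^2}{S_1-S}$, independent of $n$; a short case analysis on the sign of $l^2-\omega^2 f^2$ near $S_1$ controls the potential term. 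No new profile is introduced and no bubble is subtracted.

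A smaller point: your lower-semicontinuity argument invokes dominated convergence near the poles via ``$g^2(\varphi_n)$ vanishes quadratically in $\varphi_n$, which in turn is controlled by H\"older continuity.'' But the H\"older constant from Lemma~1 degenerates as $r\to 0,R$ (it carries a factor $(\min f)^{-1/2}$), so you do not get a uniform dominating function this way. The paper simply uses Fatou on the tails, where $\frac{l^2}{f^2}-\omega^2\ge 0$ makes the integrand nonnegative.
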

The first step in the proof is to restate the theorem in terms of the function $\varphi$ defined in Lemma 1. If we think of $u$ as a map to $\RR^3$ the representation (\ref{phirep}) can be written as
\[u(r,\theta) = (g(\varphi(r))\cos(\zeta(r)+l\te),g(\varphi(r))\sin(\zeta(r)+l\te),h(\varphi(r))).\]
Here $h$ and $g$ are as in the paragraph between equations (\ref{UEL}) and (\ref{UELgen}). To be more precise we should write $\arg(\zeta)+ l\arg(\te)$ instead of $\zeta + l\te,$ where $\arg$ is the argument function with values in $(0,2\pi).$ A computation then shows that
\begin{align*}
\cGo(u) = \pi\int_0^R[(\varphi^{'})^{2}+ (\frac{l^2}{f^2}-\omega^2)g^2(\varphi)]fdr + \pi\int_0^R(\zeta^{'})^{2}g^2(\varphi)fdr.
\end{align*}
Note that the second integral above is always positive. This implies that if we define $v(r,\te) := (\varphi(r),\te^l)$ then $v \in X_l$ and $\cGo(v) \leq \cGo(u).$ Therefore if we define
\[\Hlo(\varphi):=\pi\int_0^R[(\varphi^{'})^{2}+ (\frac{l^2}{f^2}-\omega^2)g^2(\varphi)]fdr,\]
the problem reduces to studying the minima of $\Hlo$ over the set of continuous $\varphi:[0,R]\rightarrow [0,H]$ satisfying the boundary conditions $\varphi(0) =0$ and $\varphi(R)= H.$ Note that $\omega =0$ corresponds to harmonic maps between spheres.\\
Suppose we can establish the sub-sequential convergence of minimizing sequences in $X_l.$ It will then follow from equation (\ref{uELgen}) and the ellipticity of the Laplacian that to prove the smoothness of the limit function it suffices to prove its Holder continuity. With this in mind we prove the following theorem, which is the key step in the proof of Theorem 1.
\begin{theorem}\label{key}
Fix $\omega \neq 0$ and let $\{\varphi_n\}$ be a minimizing sequence for $\Hlo$ consisting of continuous functions from $[0,R]$ to $[0,H]$ satisfying $\varphi_n(0) =0$ and $\varphi_n(R) = H.$ Then there is a subsequence of $\{\varphi_n\}$ that converges in $H^1$ to a $C^{1,\gamma}$ function $\varphi,$ for some $\gamma \in (0,1),$ with $\varphi(0) =0$ and $\varphi(R) = H.$
\end{theorem}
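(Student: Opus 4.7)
The plan is the direct method of the calculus of variations, with the heart of the proof being a concentration--compactness analysis at the two endpoints $r=0,R$. First, the strict inequality $G^{*}<l\,\mathrm{vol}(S^{2}_{\mathrm{target}})$, together with the degree-based lower bound $\frac{1}{2}\int|\nabla u|^{2}\geq l\,\mathrm{vol}(S^{2}_{\mathrm{target}})$ valid for $u\in X_{l}$, yields a uniform upper bound on $\int_{0}^{R}(\varphi_{n}')^{2}f\,dr$ for any minimizing sequence (the $-\omega^{2}g^{2}(\varphi_{n})$ term is controlled since $g$ is bounded). As $f$ is bounded away from $0$ on compact subintervals of $(0,R)$, the sequence is bounded in $H^{1}_{\mathrm{loc}}((0,R))$; by Rellich--Kondrachov and a diagonal argument I extract a subsequence that converges weakly in $H^{1}_{\mathrm{loc}}$ and uniformly on compact subsets to a continuous $\varphi:(0,R)\to[0,H]$, and passing to a further subsequence I may assume the endpoint limits $\varphi(0^{+}),\varphi(R^{-})\in[0,H]$ exist.

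\textbf{Excluding concentration.} The key step is to show $\varphi(0^{+})=0$ and $\varphi(R^{-})=H$, i.e.\ to rule out the defects (ii) and (iii) listed in the introduction. For case (ii) with $\varphi\equiv 0$ (the $\varphi\equiv H$ case is symmetric), the boundary condition $\varphi_{n}(R)=H$ forces a bubble near $r=R$ carrying the full topological degree; because $g(0)=0$ and the bubble region shrinks, the $-\omega^{2}g^{2}(\varphi_{n})f$ integrand contributes $o(1)$, and the bubble energy is bounded below by that of a degree-$l$ harmonic map, namely $l\,\mathrm{vol}(S^{2}_{\mathrm{target}})$. Adding the nonnegative bulk contribution then yields $G^{*}\geq l\,\mathrm{vol}(S^{2}_{\mathrm{target}})$, contradicting $G^{*}<l\,\mathrm{vol}(S^{2}_{\mathrm{target}})$. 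Case (iii), the bump scenario $\varphi(0^{+})=\varphi(R^{-})=0$ with $\sup\varphi>0$, is the main obstacle and is where the present argument must depart from \cite{ST1}: their monotonicity argument fails because $f$ may oscillate more than $\sin\alpha$. Instead, following the indication in the introduction, I will perform a direct surgical modification of each $\varphi_{n}$: pick $r_{1}<r_{2}$ in the interior with $\varphi(r_{1}),\varphi(r_{2})$ small and $\max_{[r_{1},r_{2}]}\varphi\geq c_{0}/2$ (with $c_{0}=\sup\varphi$); replace $\varphi_{n}|_{[r_{1},r_{2}]}$ by $0$ with short linear transitions at $r_{1}$ and $r_{2}$, leaving the degree-carrying bubble near $R$ untouched. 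The erased portion contributes at least $\pi\int_{r_{1}}^{r_{2}}(\varphi_{n}')^{2}f\,dr\geq c_{1}>0$ uniformly in $n$ (by Cauchy--Schwarz applied to a function rising through height $\geq c_{0}/4$ on a fixed interval on which $f$ is bounded above and below), while the transition cost is $O\bigl(\varphi(r_{1})^{2}+\varphi(r_{2})^{2}\bigr)$ and can be made arbitrarily small by moving $r_{1},r_{2}$ closer to the zeros of $\varphi$. The resulting competitor thus satisfies $\Hlo(\tilde\varphi_{n})\leq\Hlo(\varphi_{n})-c_{1}/2$, contradicting the minimizing property.

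\textbf{Minimality and regularity.} With cases (ii) and (iii) excluded, $\varphi(0)=0$ and $\varphi(R)=H$. Weak lower semicontinuity of $\int(\varphi')^{2}f\,dr$, uniform convergence of $g^{2}(\varphi_{n})$ on compact subintervals, and uniform $L^{1}$ control of $(l^{2}/f^{2})g^{2}(\varphi_{n})f$ near the endpoints (coming from the energy bound, since $g$ vanishes where $f$ does) give $\Hlo(\varphi)\leq\liminf\Hlo(\varphi_{n})=G^{*}$; so $\varphi$ is a minimizer, and the equality of limits upgrades weak $H^{1}$ convergence to strong convergence in the weighted $H^{1}$ norm by standard uniform convexity. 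For $C^{1,\gamma}$ regularity, $\varphi$ satisfies the Euler--Lagrange ODE
\[-(f\varphi')'+f\Big(\tfrac{l^{2}}{f^{2}}-\omega^{2}\Big)g(\varphi)g'(\varphi)=0\]
on $(0,R)$, which is uniformly elliptic with bounded coefficients away from the endpoints, yielding $\varphi\in C^{\infty}_{\mathrm{loc}}((0,R))$ by bootstrapping. To extend $C^{1,\gamma}$ up to $r=0,R$ I follow \cite{ST1}: using the expansion (\ref{metric}), the singular coefficient $l^{2}/f^{2}$ is recognized (modulo an $O(r^{3})$ perturbation) as the radial part of the Laplacian on a higher-dimensional model sphere, so $\varphi$ lifts to a radial weak solution of an elliptic equation on $S^{n}$ with $n\geq 3$. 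The improved Sobolev embedding $H^{1}(S^{n})\hookrightarrow L^{p}$ for every finite $p$, $L^{p}$ elliptic theory, and Morrey's embedding then deliver $\varphi\in C^{1,\gamma}$ up to the endpoints.
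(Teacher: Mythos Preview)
Your overall structure---compactness via the direct method, the endpoint trichotomy, and dimension-lifting for regularity---matches the paper. Compactness and case (ii) are essentially as in the paper (your ``bubble'' language is fine; the point is that $\int g^2(\varphi_n)f\,dr\to 0$ by dominated convergence, forcing $\liminf\Hlo(\varphi_n)\ge l\,\mathrm{vol}(S^2_{\mathrm{target}})$).

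There is, however, a genuine gap in your treatment of case (iii). You assert that zeroing out $\varphi_n$ on $[r_1,r_2]$ removes at least the kinetic contribution $\pi\int_{r_1}^{r_2}(\varphi_n')^2f\,dr$. This is only true if the potential piece $\pi\int_{r_1}^{r_2}\bigl(\tfrac{l^2}{f^2}-\omega^2\bigr)g^2(\varphi_n)f\,dr$ is nonnegative. But the bump of $\varphi$ may sit entirely in the region where $\tfrac{l^2}{f^2}-\omega^2<0$ (nonempty as soon as $\omega>l/\max f$, and possibly most of $(0,R)$ for large $\omega$). There the bump is energetically \emph{favorable} for the potential, and erasing it \emph{raises} $\Hlo$ by a fixed amount that may exceed your kinetic gain $c_1$. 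Since there is no a priori ordering between $c_1$ and this potential loss, the conclusion $\Hlo(\tilde\varphi_n)\le\Hlo(\varphi_n)-c_1/2$ is not established.

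The paper's Lemma~2 confronts exactly this sign issue and does not simply cut to zero. After the change of variables $s=\int_{R/2}^r dt/f(t)$, it performs a \emph{shift} $\overline{\varphi_n}(x)=\varphi_n(x-S_n+T_n)$ that excises a segment on which $\varphi_n$ descends and then returns to the same value; monotonicity of $f$ for large $s$ guarantees the shifted tail has no worse potential. On the portion where $l^2-\omega^2f^2<0$, the paper replaces $\varphi_n$ by its local \emph{maximum} (not by zero), making the negative potential term \emph{more} negative---a gain---while killing the kinetic energy. Further sub-cases (e.g.\ $f(S_1)=l/\omega$) are treated separately. Your ``cut to zero with linear transitions'' does not supply any of this and would need to be replaced by an argument of this kind.

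On regularity, your sketch is in the right direction but omits the substantive steps. The paper lifts $\psi=\varphi/f^l$ (not $\varphi$) to a radial function on a ball in $\RR^{2l+2}$---the dimension is forced by $l$, not an arbitrary $n\ge 3$---and the bootstrap from $L^{p_0}$ to $W^{2,p}$ with $p>l+1$ hinges on first proving $\int_0^\rho \varphi^2/f^2\,dr<\infty$, obtained by multiplying the Euler--Lagrange ODE by $\varphi$ and integrating. That estimate is what starts the bootstrap; your invocation of Sobolev and $L^p$ elliptic theory does not by itself supply it.
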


\begin{proof}
 By the bound on $\{\Hlo(\varphi_n)\}$ we have a uniform $L^2-$bound on $\{\varphi_n^{'}\}.$ It follows that $\{\varphi_n\}$ is uniformly bounded in $H^1((0,R))$ and by the Sobolev inequalities also in $C^{0,1/2}(V)$ for any $V \subset \subset (0,R)$ \footnote{Note that by the $H^1$ norm of a function we mean the $L^2$ norm of the function and its derivative with respect to the measure $fdr.$ Since $f$ vanishes at the endpoints of $(0,R)$ we have to exclude the end points when we want to apply the Sobolev inequalities. This is also the reason why we have to consider the three cases below for the limiting function $\varphi.$} By Arzela-Ascoli we can find a continuous $\varphi$ such that $\varphi_n \rightarrow \varphi$ uniformly on compact intervals, along some subsequence which we continue to denote by $\{\varphi_n\}.$  Boundedness of $\{\varphi_n\}$ in $H^1$ also implies that
\[\varphi_n \stackrel{H^1}{\rightharpoonup} \varphi.\]
Lower semicontinuity of weak limits gives
\[\Hlo(\varphi) \leq d:=\inf_{\stackrel{\psi(0)=0}{\psi(R)=H}}\Hlo(\psi).\]
Indeed, to see that $\int_0^R(\frac{l^2}{f^2}-\omega^2)g^2(\varphi)fdr \leq \lim \int_0^R(\frac{l^2}{f^2}-\omega^2)g^2(\varphi_n)fdr$ we write the integral as $\int_{\frac{R}{2}-M}^{\frac{R}{2}+M} + \int_{[0,R]-[\frac{R}{2}-M,\frac{R}{2}+M]}$ where $M$ is chosen so large that $\frac{l^2}{f^2}-\omega^2 \geq 0$ for $s \not\in [\frac{R}{2}-M,\frac{R}{2}+M].$  For the first integral we use the uniform convergence of $\varphi_n$ and for the second we use Fatou's lemma. In particular $\Hlo(\varphi) < \infty$ and therefore $\varphi(0)$ and $\varphi(R)$ must be $0$ or $H.$\\
There are three possibilities for the limit $\varphi$:\\
(i)$\varphi(0)=0$ and $\varphi(R)=H$\\
(ii)$\varphi \equiv 0$ or $\varphi \equiv H$\\
(iii)$\varphi(0) = \varphi(R)=0$ but $\varphi$ is not constantly $0$ or the same statement with $0$ replaced by $H$.\\
In case (i) since $\varphi$ satisfies the correct boundary conditions we must have $\Hlo(\varphi) = d = \lim_{n\rightarrow \infty}\Hlo(\varphi_n).$ But as indicated above $\int_0^R(\frac{l^2}{f^2}-\omega^2)g^2(\varphi)fdr \leq \lim \int_0^R(\frac{l^2}{f^2}-\omega^2)g^2(\varphi_n)fdr$ so we must have $\|\varphi^{'}\|_{L^2}\geq {\lim}\|\varphi_n^{'}\|_{L^2}$ and thus $\varphi_n^{'} \stackrel{L^2}{\rightarrow} \varphi^{'}.$ $L^2$ convergence of $\{\varphi_n\}$ follows from the poitwise convergence and the dominated convergence theorem, and hence $\varphi_n \stackrel{H^1}{\rightarrow} \varphi.$\\
Next, we eliminate cases (ii) and (iii). Because $\varphi$ appears only in the forms $g^2(\varphi)$ and $(\frac{d\varphi}{d\alpha})^2$ it suffices to consider (ii) and (iii) with $\varphi \equiv 0,$ etc. Suppose we are in case (ii). Since the infimum of $\Hlo$ is strictly less than $l\mathrm{vol}(S^2_{target}),$ we will have reached a contradiction if we show that
\[ \lim_{n \rightarrow \infty} \Hlo(\varphi_n) \geq l\mathrm{vol}(S^2_{target}).  \]
Since $\Hlo(\varphi_n) = \mathcal{H}_{l,0}(\varphi_n) - \int_{0}^R \omega^2g^2(\varphi_n)fdr,$ and since $l\mathrm{vol}(S^2_{target})$ is a lower bound for $\Hlo,$ it suffices to show that
\begin{align*}
\lim_{n \rightarrow \infty} \int_0^R g^2(\varphi_n)fdr = 0.
\end{align*}
Since $g^2(\varphi_n)f$ is up to a constant bounded by the integrable function $f$, the above equality follows from the dominated convergence theorem. This shows that (ii) is not a possibility.\\

%REMARKS FOR MYSELF: HARMONIC MAPS BETWEEN SPHERES ALWAYS EXIST BECAUSE HOLOMORPHIC MAPS ARE HARMONIC. DON'T WORRY ABOUT SHOWING THAT THESE MINIMIZE %ENERGY. PROBABLY Z TO THE L IS A DEGREE L MAP FOR WHICH THE VALUE OF THE G FUNCTIONAL IS STRICTLY LESS THAN THE HARMONIC CASE AND YOU GET THE DESIRED %CONTRADICTION. IF YOU CARE TO LEAVE iii AS A POSSIBILITY (WHICH YOU'VE DONE SO FAR) THEN YOU NEED TO PROVE THE EQUIVALANT OF THE MONOTONICITY THEOREM %TO ELIMINATE IT. DO THIS LATER. ALSO DON'T FORGET TO SAY THAT THE BEHAVIOR OF THE DEFINING FUNCTIONS OF THE METRICS, f AND g ARE SIMILAR AT BOTH END %POINTS.
The proof that (iii) is not a possibility is more technical and therefore for the sake of exposition, we isolate it in a lemma.
\begin{lemma}
 (iii) is not a possibility.
\end{lemma}
 \begin{proof}(of lemma 2)
 We assume that $\varphi(0) = \varphi(R) = 0$ and reach a contradiction. The case where $\varphi(0) = \varphi(R) = H$ is treated similarly. We will reach the desired contradiction by modifying each $\varphi_n$ in a way that $\cGo(\varphi_n)$ is reduced by an amount independent of $n.$ This will contradict the minimizing property of $\{\varphi_n\}.$ It is convenient to introduce a change of variables here. Let $s = s(r) = \int_\frac{R}{2}^r\frac{dt}{f(t)},$ so that $s$ goes from $-\infty$ to $\infty$ as $r$ goes from $0$ to $R.$
 %(note: the motivation is similar to that in the previous write up but the exact choice is technical and is made to eliminate the factor of $f$ from two of the terms in the integral)
 In terms of this new variable the $\Hlo$ functional becomes
\[\Hlo(\varphi) = \int_{-\infty}^{\infty}[(\varphi^\prime(s))^2+(l^2 - \omega^2f^2(s)g^2(\varphi(s)))]ds.\]
To be precise we should write $r(s)$ instead of $s$ as the argument of the functions involved, however for simplicity of notation we ignore this issue and think of our functions as defined on all of $\RR$ (so for instance $f(s)$ really means $f(r(s))$). Keep in mind that our expansion for $f$ shows that $f$ is a decreasing function for $r$ near $R,$ or equivalently for large $s.$ We divide the proof into cases:\\\\
Case $1$: Suppose there exist intervals of the form $[a,\infty)$ where $\varphi\equiv0$ identically, and let $S_1$ be the smallest such $a.$\\\\
Case $1a$: $f(S_1)\not= l/\omega:$\\\\
Let $S<S_1$ be such that $\varphi(S)>0,$ $\varphi(S)=\max_{[S,S_1]}\varphi,$ $g$ is increasing on $[0,2\varphi(S)],$ and $l^2-\omega^2f^2$ doesn't change sign in $[S,S_1].$ Let $I:=\{x>S_1~s.t.~ l^2-\omega^2f^2\leq 0\},$ and let $b:=\max_I|l^2-\omega^2f^2|.$ Choose $\delta_1$ so small that $\delta_1|I|b<c\varphi^2(S)$ for a small $c$ to be chosen. Let $T_1>S_1$ be such that $l^2-\omega^2f^2>0$ and $f$ is decreasing on $[T_1,\infty)$ and choose $T>T_1$ such that $f(T)<f(x)$ for all $x \in [S,T_1].$ Choose $\delta_2\leq \varphi^2(S)/4$ so small that $g$ is increasing and smaller than $\delta_1$ on $[0,2\delta_2].$ Now let $N$ ne so large that for all $n\geq N,$ $|\varphi_n-\varphi|<\delta_2$ on $[S-1000,T+1000].$ Finally for each $n\geq N$ choose $S_n \in [S,S_1]$ such that $\varphi_n(S_n)=\max_{[S,S_1]}\varphi_n$ and $T_n>T$ such that $\varphi_n(S_n)=\varphi_n(T_n).$ \\\\
Case $1a^\prime$: $l^2-\omega^2f^2>0$ on $[S,S_1]:$\\\\
Define
\begin{align*}
\overline{\varphi_n}(x):=\left\{\begin{array}{rcl}\varphi_n(x)& \mbox{for}~ x\leq S_n\\ \varphi(x-S_n+T_n)& \mbox{for}~ x >S_n \end{array}\right\}.
\end{align*}
Then
\begin{align*}
&\int_{S_n}^\infty[{\overline{\varphi_n}^{\prime}}^2(x) + (l^2-\omega^2f^2(x))g^2(\overline{\varphi_n}(x))]dx\\
&= \int_{T_n}^{\infty}[{\varphi_n^{\prime}}^2(y) + (l^2-\omega^2f^2(y-T_n+S_n))g^2(\varphi_n(y))]dy\\
&\leq\int_{T_n}^{\infty}[{\varphi_n^{\prime}}^2(y) + (l^2-\omega^2f^2(y))g^2(\varphi_n(y))]dy.
\end{align*}
Therefore
\begin{align*}
&\Hlo(\varphi_n)-\Hlo(\overline{\varphi_n})\geq\int_{S_n}^{T_n}[{\varphi_n^{\prime}}^2(y) + (l^2-\omega^2f^2(y))g^2(\varphi_n(y))]dy\\ &\geq \int_{S_n}^{S_1}\varphi_n^2dx-b|I|g(\delta_2)\geq \frac{|\varphi_n(S_n)-\varphi_n(S_1)|^2}{S-S_n}-c\varphi^2(S)\\
&\geq\frac{|\varphi(S)-2\delta_2|^2}{S-S_1}-c\varphi^2(S)\geq(\frac{1}{4(S-S_1)}-c)\varphi^2(S)>C
\end{align*}
for some $C$ independent of $n,$ if we choose $c$ small enough.\\\\
Case $1a^{\prime\prime}$: $l^2-\omega^2f^2<0$ on $[S,S_1]:$\\\\
Define
\begin{align*}
\overline{\varphi_n}(x):=\left\{\begin{array}{rcl}\varphi_n(x)& \mbox{for}~ x\leq S_n\\ \varphi(S_n)& \mbox{for} ~S_n<x\leq S_1\\ \varphi(x-S_1+T_n)& \mbox{for}~ x>S_1 \end{array}\right\}.
\end{align*}
Since $(l^2-\omega^2f^2)g^2(\overline{\varphi_n})\geq(l^2-\omega^2f^2)g^2(\varphi_n)$ on $[S_n,S_1],$ by the same argument as before
\begin{align*}
&\Hlo(\varphi_n)-\Hlo(\overline{\varphi_n})\geq \int_{S_n}^{S_1}{\varphi_n^\prime}^2dx+\int_{S_1}^{T_n}[{\varphi_n^\prime}^2+(l^2-\omega^2f^2)g^2(\varphi_n)]dx>C.
\end{align*}
\\
Case $1b$: $f(S_1)=l/\omega$:\\\\
Define the constants used in case $1a$ with the following modifications: The requirement that the sign of $l^2-\omega^2f^2$ be constant on $[S,S_1]$ is replaced by $|l^2-\omega^2f^2|<c^\prime$ on $[S,S_1],$ for a small $c^\prime$ to be chosen. Note that since the curvature of the target is positive near the poles, we can choose $S$ so small that $g(x)\leq x$ on $[0,2\varphi(S)]$ (see the expansion for $g$ near $t=0$). Moreover, we require that $S_1-S<1.$ Now define $\overline{\varphi_n}$ in the same way as in case $1a^{\prime}.$ Note that
\begin{align*}
-\int_{S_n}^{S_1}|l^2-\omega^2f^2|g^2(\varphi_n)dx>-c^\prime\int_{S_n}^{S_1}\varphi_n^2dx\geq \frac{-5c^\prime}{4} \varphi^2(S).
\end{align*}
It follows by the same argument as before that
\begin{align*}
&\Hlo(\varphi_n)-\Hlo(\overline{\varphi_n})\geq (\frac{1}{4(S-S_1)}-c-\frac{5c^\prime}{4})\varphi^2(S),
\end{align*}
which is bounded below by a positive constant independent of $n$ if we choose $c$ and $c^\prime$ small enough.\\\\
Case $1b$: there is no $a$ such that $\varphi \equiv 0$ on $[a,\infty)$:\\\\
Choose $S_1$ so large that $f$ is increasing and $l^2-\omega^2f^2$ is positive on $[S_1,\infty),$ and such that $\varphi(S)\neq 0.$ Let $T_1>S_1$ be so that $\varphi(T_1)<\varphi(S_1)/20000.$ Choose $N$ so large that $|\varphi_n-\varphi|\leq \varphi(S_1)/1000$ in $[S_1-1000,T_1+1000]$ for $n\geq N,$ and for each such $n$ let $T_n>T_1$ be such that $\varphi(T_n)=\varphi(S_1).$ If we define $\overline{\varphi_n}$ in the same way as in case $1a^\prime$ with $S_n$ replaced by $S_1,$ it follows that $\Hlo(\varphi_n)-\Hlo(\overline{\varphi_n})\geq C$ for a constant $C$ independent of $n.$
\end{proof}
Next we obtain $C^{1,\gamma}$ bounds for $\varphi.$ Note that we are ultimately interested in showing that $u$ defined by $(\varphi(r), \te^l)$ is differentiable and for this it suffices to prove Holder estimates for $\frac{\varphi(r)}{r^l}$ (and for $\frac{\varphi(r)}{(R-r)^l}$ which will follow from the symmertry of the arguments to follow).To see this, on both spheres consider the charts identifying a neighborhood of the north pole with a ball in $\mathbb{C}.$ With $w$ denoting the complex coordinate $u$ is then given by $u(w) = \varphi(|w|)\frac{w^l}{|w|^l},$ which shows that we need to prove Holder continuity of $\frac{\varphi(r)}{r^l}.$ It follows from the expansion for $f$ that we may instead prove our Holder estimates for $\frac{\varphi}{f^l},$ and this is what we aim for.\\
Holder continuity of $u$ away from the poles proves Holder continuity of $\varphi$ away from the end-points, but the latter also follows from the elliptic Euler-Lagrange equations satisfied by $\varphi$:
\begin{equation}\label{phieq}
\varphi^{''} + \frac{f^{'}}{f}\varphi^{'} + (\omega^2 - \frac{l^2}{f^2})g(\varphi)g^{'}(\varphi) = 0.
\end{equation}

Ellpitic regularity in fact proves smoothnes of $\varphi$ away from the endpoints. Regularity at the end-points is more subtle. For this we think of $\frac{\varphi}{f^l}$ as a radial function defined on $B_\rho-\{0\},$ where $B_\rho$ is the ball of radius $\rho,$ to be determined later, in $\RR^{2l+2}.$  We equip $B_\rho - \{0\}$ with the metric $\mathrm{diag}(1,f^2(r),\cdots,f^2(r)).$ The motivation for this is that $\varphi$ already came from a function defined on $S^2.$ As will become clear momentarily, raising the dimension allows us to use appropriate Sobolev inequalities to deal with the singularity in $\frac{\varphi}{f^l}.$ We use $\RR^{2l+2}$ rather than $S^{2l+2}$ because smoothness is a local issue. With $\Delta$ denoting the Laplacian of our new modified metric on $B_{\rho}-\{0\},~\psi$ satisfies the equation

\begin{equation}\label{psieq}
\Delta \psi = \frac{l^2}{f^{l+2}}(g(\varphi)g^{\prime}(\varphi)-\varphi)+\frac{1}{f^l}((\frac{l^2(1-{f^{\prime}}^2)}{f^2}-\frac{lf^{''}}{f})\varphi - \omega^2 g(\varphi)g^{\prime}(\varphi)) := R(\psi)
\end{equation}

Thanks to the Sobolev embeddings in order to show that $\psi \in C^{0,\gamma}$ for some $\gamma,$ it suffices to show $\psi \in W^{2,p}$ with $p>l+1.$ The key point is that the expansion (\ref{metric}) for $f$ and the corresponding expansion for $g$ show that
\begin{equation}\label{errorbound}
R(\psi)\lesssim \frac{|\varphi|^3}{f^{l+2}}+|\psi| \lesssim |\psi|^{1+\frac{2}{l}} + |\psi|.
\end{equation}
Suppose we could prove that $R(\psi) \in L^{p_0}$ for some $p_0.$ Then by elliptic regularity $\psi \in W^{2,p_0} \hookrightarrow L^{q_0}$ where $\frac{1}{p_0} - \frac{1}{q_0} = \frac{1}{l+1}.$ By (\ref{errorbound}) this implies $R(\psi) \in L^{p_1}$ with $p_1 = lq/(2+l).$ We can continue recursively to get $R(\psi) \in L^{p_n}$ with
\begin{align*}
&\frac{1}{p_n}= \frac{l+2}{l}(\frac{1}{p_{n-1}}-\frac{1}{l+1}) = \cdots\\
&=(\frac{l+2}{l})^n(\frac{1}{p_0} -\frac{l+2}{2l+2}) + \frac{l+2}{2l+2}.
\end{align*}
If $p_0 > (2l+2)/(l+2)$ this means that $p_n$ can be made larger than $l+1$ which in turn, coupled with elliptic regularity, implies that $\psi \in W^{2,p_n}.$ The proof of the $c^{0,\gamma}$ bounds will therefore be complete if we can show that $R(\psi) \in L^{p_0}(B_\rho - \{0\}),~ p_0 = (2l+3)/(l+2).$ The factor of $f^{2l+1}$ in the integrals below comes from the volume form on $B_\rho - \{0\}$:
\begin{align*}
&\|R(\psi)\|_{L^{p_0}}^{p_0} \lesssim \int_0^\rho |\psi|^{p_0(\frac{l+2}{l})}f^{2l+1}dr\\
&= \int_0^\rho \frac{|\varphi|^{2+3/l}}{f^2}dr \lesssim \int_0^\rho \frac{|\varphi|^2}{f^2}dr.
\end{align*}
To bound this last integral we multiply (\ref{phieq}) by $\varphi$ and integrate on $[a,\rho],~0<a<\rho:$
\begin{align*}
&\int_a^\rho({\varphi^{'}}^2-\frac{f^{'}}{f}\varphi\varphi^{'}+(\frac{l^2}{f^2}-\omega^2)\varphi g(\varphi)g^{'}(\varphi))dr\\
& = \int_a^\rho(\varphi^{''}\varphi + {\varphi^{'}}^2)dr = \frac{1}{2}((\varphi^2)^{'}(\rho)-(\varphi^2)^{'}(a)).
\end{align*}
Since $\varphi$ is not constantly zero near the end-points (otherwise there is nothing to prove) we can find a sequence of small numbers $\{a_j\}$ tending to zero so that $\varphi^{'}(a_j) > 0$ for all $j.$ It follows that if we choose $\rho$ small enough, and upon replacinf $a$ by $a_j,$ the right hand side of the equation above becomes bounded by $C_1(\rho)$ for some constant $C_1$ independent of $a_j.$ We also have
\begin{align*}
{\varphi^{'}}^2-\frac{f^{'}}{f}\varphi\varphi^{'} = (\varphi^{'} -\frac{f^{'}}{2f}\varphi)^2 -\frac{{f^{'}}^2}{4f^2}\varphi^2 \geq -\frac{{f^{'}}^2}{4f^2}\varphi^2.
\end{align*}
Using the expansions for $g$ and $f$ we see that after possibly making $\rho$ even smaller
\begin{align*}
4l^2\varphi g(\varphi)g^{'}(\varphi) -{f^{'}}^2\varphi^2 = \varphi^2[(4l^2-f^2) + O(\varphi)] \geq C_2 \varphi^2.
\end{align*}
Putting the pieces together we have
\begin{align*}
\int_{a_j}^\rho \frac{\varphi^2}{f^2}dr \leq C_3 + \omega^2 \int_0^\rho \varphi g(\varphi)g^{'}(\varphi)dr \leq C(\rho).
\end{align*}
Sending $j$ to infinity we can conclude, using Fatou's Lemma, that
\begin{align*}
\int_{0}^\rho \frac{\varphi^2}{f^2}dr < \infty.
\end{align*}
as was to be shown. We have just obtained $C^{0,\gamma}$ bounds on $\psi.$ But once again the expansions for $f$ and $g$ imply that $R(\psi)$ is also Holder continuous and applying ellpitic regularity to equation (\ref{psieq}) we get the desired $C^{1,\gamma}$ estimates.
\end{proof}
We are now ready for the proof of Theorem 1:
\begin{proof} (of Theorem 1)
Let $\{u_n\}$ be a minimizng sequence for $\cGo$ in $X^l.$ There exists $u \in H^1(S^2)$ such that we can extract a subsequence of $\{u_n\},$ again denoted by $\{u_n\},$ satisfying
\[\nabla u_n \stackrel{L^2}{\rightharpoonup} \nabla u,~u_n\stackrel{L^2}{\rightarrow}u,~ u_n \stackrel{a.e.}{\rightarrow} u.\]
The almost-everywhere convergence follows from Arzela-Ascoli, because boundedness in $H^1$ and Lemma 1 provide us with locally uniform Holder bounds on $\{u_n\}$ away from the poles. $L^2$convergence follows as a consequence. Since $|u_n| \lesssim 1$ and by lower semicontinuity of norms $\cGo(u) \leq \lim \cGo(u_n) = G^*.$\\
$u$ is equivariant because for any $g \in SO(3)$
\[|u(gx)-g^lu(x)| \leq|u(gx)-u_n(gx)| + |g^l(u_n(x)-u(x))|.\]
which converges to zero. We can therefore find functions $\varphi$ and $\zeta$ as before such that $u(r,\te) = (\varphi(r),\zeta(r)\te^l).$ Also since $u_n \in X^l$ we can write $u_n(r,\te) = (\varphi_n(r),\zeta_n(r)\te^l)$ with $\varphi_n(0) = 0,~ \varphi_n(R)=H$ for all $n.$ Since $\{\varphi_n\}$ is minimizing, by Theorem 2 it converges strongly to a $C^{1,\gamma}$ function along some subsequence (for the same $\gamma$ as in the Theorem 2), which by uniqueness of limits has to be $\varphi.$  By our earlier degree computation $u$ has degree $l$ and hence belongs to $X^l.$ It follows that $\cGo(u) = G^*$ whence $\nabla u_n \stackrel{L^2}{\rightarrow} \nabla u$ and the $H^1$ convergence is strong. Finally if $\zeta$ is not constant we can reduce $\cGo(u)$ by replacing $\zeta$ by $1.$  It follows that $u(r, \te) = (\varphi(r),\te^l),$ which is $C^{1,\gamma}$ as argued in Theorem 2. Smoothness of $u$ follows from elliptic regularity applied to equation (\ref{uELgen}).
\end{proof} 
\section{Holder Continuity}
\begin{center}

\end{center}

\vspace*{0.5cm}

In this section and the next, we prove that the Cauchy problem for rotationally equivariant wave maps from $U: S^2\times \RR \rightarrow S^2$ (where the metrics on the spheres are as in the previous section) with smooth initial data of small energy has globally smooth solutions. The small time existence of solutions follows from the standard theory of nonlinear waves, so we need to prove that a solution can be smoothly extended to all times. In the current section we prove uniform Holder bounds on the solution which allow its extension as a Holder continuous map. In the next section we prove higher regularity. The Cauchy problem for $U$ is
\begin{equation}\label{cauchyproblem}
\left\{\begin{array}{rcl}\Box U = \sum \gamma^{ij}B(U)(\partial_i U, \partial_j U)) =: Q(U)\\ (U(x,t_0),U_t(x,t_0))=(U_0(x),U_1(x))\end{array}\right\},
\end{equation}
where $U_j$ satisfy the equivariance ansatz $U_j(r,\theta)=\theta^l\cdot U_j(r,0)$ and $\gamma$ is the metric on the domain. We want to prove the following
\begin{theorem}
There exists an $\epsilon >0$ such that if the energy of $(U_0,U_1)$ is smaller than $\epsilon$ then the Cauchy problem (\ref{cauchyproblem}) has a unique smooth solution which exists for all times.
\end{theorem}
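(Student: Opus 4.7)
The plan is to couple local well-posedness (standard for semilinear wave equations with a smooth target) with a continuation criterion: a smooth equivariant solution extends as long as a suitable weighted $L^\infty$ norm of its first derivatives remains bounded. By finite speed of propagation and the symmetry, the first potential singularity lies on the axis $r=0$, and after a time translation it suffices to bound $u$ and $\nabla u$ uniformly in a backward characteristic cone with vertex at $(t,r)=(0,0)$ for initial data posed at $t=-1$. In characteristic coordinates $\eta=t+r$, $\xi=t-r$, introduce
\begin{align*}
\A^2 = \tfrac{f}{2}\bigl(|\partial_\eta u|^2 + \tfrac{l^2}{f^2}|Au|^2\bigr), \qquad \B^2 = \tfrac{f}{2}\bigl(|\partial_\xi u|^2 + \tfrac{l^2}{f^2}|Au|^2\bigr),
\end{align*}
and the norm $\X$ defined in the overview. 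The aim is to derive a bootstrap inequality $\X \lesssim 1 + \epsilon\,\X$, which for $\epsilon$ small forces $\X \lesssim 1$.

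The analytic core is the representation obtained from the fundamental solution of \cite{F} on a curved background:
\begin{align*}
u(t,r,\te) = \int_\xi^\eta \int_{-2-\eta^\p}^{\xi} \int_{\{(t-t^\p)^2 \geq d^2\}} \frac{w^{+}\, Q(U)\, f(r^\p)}{\sqrt{(t-t^\p)^2 - d^2((r,\te),(r^\p,\te^\p))}} \, d\te^\p\, d\xi^\p\, d\eta^\p,
\end{align*}
where $d$ is the geodesic distance on $M$. Together with the pointwise nonlinearity bound $|Q(U)| \lesssim \A\B/f(r)$ and the energy--flux identity on characteristic slabs (which allows $\int \B^2$ along an $\eta$-line to be absorbed by $\epsilon$), this representation produces the desired smallness once one can pass $\partial_\eta$ inside and control the singular kernel.

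Differentiating in $\eta$ yields a boundary contribution $I$ from the upper limit $\eta^\p = \eta$ and a bulk contribution $II$. I would handle $I$ via a dedicated lemma (lemma 1 in the paper), which in turn depends on the geometric comparison statements (lemmas 2--3) that estimate $d$ and its derivative near the diagonal using triangle comparison theorems (Toponogov/Rauch) against the flat and spherical models. For $II$ I would partition the backward cone into the near region $K_1 = \{|t-t^\p|\leq r+r^\p\}$ and the far region $K_2$, writing $II = II_1+II_2$. On $K_1$ the arguments of \cite{CT} apply only after replacing the flat law of cosines with quantitative comparison estimates on $d((r,\te),(r^\p,\te^\p))$ and on its $\eta$-derivative; this is where the geometric lemmas 4--7 enter, culminating in lemma 7 which uses all the previous ones. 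On $K_2$ the domain of integration is $\eta$-independent, so lemma 8 reduces the matter to essentially the flat analysis of \cite{CT}. A parallel (and simpler) estimate for $\partial_\xi u$ uses the flux identity and provides the smallness factor of $\epsilon$.

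Combining these estimates gives $\X \leq C(1+\epsilon\X)$, hence $\X \leq 2C$ for $\epsilon \leq 1/(2C)$. Boundedness of $\X$ plus the analogous control of $\B$ gives uniform Holder estimates on $u$ by integrating $\partial_\eta u$ and $\partial_\xi u$ along characteristics with the $(f(r))^{1/2-\delta}$ weight, contradicting any putative blow-up at the axis; the solution therefore extends globally as a Holder map, and smoothness will be recovered in Section 4 by differentiating the equation covariantly and re-running the same scheme for $w = u_r + \tfrac{l u}{f}$. Uniqueness within the equivariant class is classical. The main obstacle, and the main novelty relative to \cite{CT} and \cite{ST1}, is the geometric package: one must track how the singularity $((t-t^\p)^2 - d^2)^{-1/2}$ and its $\eta$-derivative behave when $d$ is only implicitly defined by the curved metric $dr^2 + f^2(r)\,d\te^2$, and the proof of lemma 7 in particular is where the triangle comparison machinery is pushed hardest.
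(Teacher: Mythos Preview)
Your proposal is correct and follows essentially the same route as the paper: Friedlander's representation, the split $\partial_\eta u = I + II$ with $II = II_1 + II_2$ over $K_1$ and $K_2$, the geometric comparison lemmas replacing the law of cosines, the bootstrap $\X \lesssim 1 + \epsilon\X$, and higher regularity via the intertwined quantity $w = u_r + \frac{l}{f}u$. The only minor imprecision is your phrase ``a parallel estimate for $\partial_\xi u$'': the paper does not bound $\partial_\xi u$ pointwise in the same way but rather extracts the smallness factor from the flux integral $\int \B^2\,d\xi' \le \epsilon^2$ inside the kernel estimates, exactly as you indicate elsewhere.
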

As in \cite{ST1} and \cite{CT} we may assume that the initial data are given at time $t_0 =-1$ and that a smooth solution $U$ exists up to time $t=0.$ Using the rotational symmetry, we may also assume as in the aforementioned papers, that the first possible blow happens at $r=0.$ It follows that we can conclude the statement of the theorem if we can find uniform bounds on $U$ and its derivatives near the origin $(t,r)=(0,0).$ This and finite speed of propagation allow us to assume that we are working in a neighborhood of the origin in $\RR^2 \times \RR$ equipped with a non-standard metric. We shall occasionally refer to the origin as the north pole. The rest of this section will be devoted to finding uniform Holder bounds on $U.$\\
To this end, we use the integral representation derived in \cite{F} for the solution to the wave equation on a curved space-time. To be able to use this representation the domain has to be causal in the terminology of \cite{F}. A domain $D$ is called causal if\\
(1) D is contained in a geodesically convex domain $D_0$ such that\\
(2) with $J^{+}$ and $J^{-}$ denoting the forward and backward light cones respectively, $J^{+}(p)\cap J^{-}(q)$ is compact for any $p$ and $q$ in $D_0.$\\
It is proved in theorem 4.4.1 in \cite{F} (p. 147) that every point is contained in such a neighborhood (see \cite{F} section $4.4$ for further explanation). We may therefore assume that we are working in a neighborhood $V:=\Omega \times (T,0)$ which satisfies the desired convexity conditions. Let $K > \max_{p \in S^2}k(p).$  We may take $\Omega$ to be such that it contains a small ball around $r=0$ and so small that it has non-negative curvature and that any geodesic triangle in $\Omega$ has a comparison triangle in $S^2(\frac{1}{\sqrt{2K}}).$ For simplicity of notation we set $T=-1.$

%(i) if $r \leq 2R,~ (r,\theta) \in \Omega$ for any $\theta$\\

%(ii) any triangle with vertices the north pole, $(r,0)$ and $(r^{'},\theta^{'}),$ with $r,~r^{'} \leq 2R,$ has a comparison triangle %in $S^2(\frac{1}{\sqrt{2K}}).$

\begin{remark}

By a comparison triangle we mean a triangle whose sides have the same lengths as the sides of the original triangle.The point is that we want to use triangle comparison theorems to obtain estimates on the fundamental solution, but the sphere of curvature $K$ may be very small. Therefore we need to work in a small neighborhood of the north pole where the comparison triangles fit on a small sphere in $\RR^3$. To be precise we use two kinds of comparison triangles. The first is when we take a triangle in a simply connected space of constant curvature, whose three sides have the same lengths as the sides of the original triangle. The other kind is when in the triangle in the constant curvature space, the two sides emanating from the pole (or the origin when the comparison space is $\RR^2$) have the same lengths as the corresponding sides in the original triangle and also they meet at the same angle in the two triangles. The corresponding comparison results are: 1. (\cite{B}, pp. 121-122) Suppose $p,q,r$ are three points in $M$ and that the curvature of $M$ is bounded below by $\delta$ (respectively above by $\Delta$). If we take three points $p^\p,q^\p,r^\p$ in the model space (constant curvature, simply connected) $M_{\delta}$ (respectively $M_\Delta$) of curvature $\delta$ (respectively $\Delta$) such that the triangle between them lies in a normal chart and $d(p,q)=d(p^\p,q^\p),~ d(p,r)=d(p^\p,r^\p)$ and the angles $A=A^\p$ at $p$ and $p^\p$ then $d_M(q,r)\leq d_{M(\delta)}(q^\p,r^\p)$ (respectively $d_M(q,r)\geq d_{M(\Delta)}(q^\p,r^\p)$). 2. (\cite{B}, pp. 121-122 or \cite{K}, section 2.7) Under the same assumptions as before, except that if instead of assuming $A=A^\p$ we assume $d(q,r)=d(q^\p,r^\p)$ then the angles in $M$ are greater than (less than) or equal to the respective angles in $M_{\delta}$ ($M_{\Delta}$).

\end{remark}

\begin{remark}
The expansion for $f$ near the origin in the previous section implies in particular that $f \sim r$ near $r=0.$ This will be used throughout this work without further mention, so we will often use $f$ and $r$ interchangeably in our estimates.
\end{remark}

Following \cite{F} we denote by $J^{-}(P)$ the backward light cone through $P \in V.$ With $(r,\theta, t)$ polar coordinates for $P$ on $S^2 \times \RR$ and $d$ denoting the geodesic distance function, Friedlander's representation of the solution is (up to a factor of $2\pi$)

\begin{align*}
U(r,\theta, t) = \int_{J^{-}(p)}\frac{W^{+}(r,\theta,t,r^{'},\theta^{'},t^{'})Q(U(r^{'},\theta^{'},t^{'}))}{\sqrt{(t-t^{'})^{2}- d^{2}((r,\theta),(r^{'},\theta^{'}))}}d\mu(r^{'},\theta^{'},t^{'}).
\end{align*}

Here $Q$ denotes the nonlinear term in the wave maps equation, $W^+$ is defined in the same way as in in \cite{F} and satisfies the asymptotic behavior obtained there (we only use the fact that $W^{+}$ is smooth, but see \cite{F}, p. 246 for the asymptotic expansion and further explanation). See \cite{F}, p. 249, equation (6.3.20) and the explanation following it for the derivation of this representation. See also theorems 6.3.1 and 6.2.1 in \cite{F}.\\

Now $J^{-}(P)$ is

\begin{align*}
&\{(r^{'},\theta^{'}, t^{'})| (t-t^{'})^2 \geq d^2((r,\theta),(r^{'},\theta^{'}))\} \\
&= \{(r^{'},\theta^{'}, t^{'})| (r^{'},t^{'}) \in K(r,t),~ \mathrm{and}~\theta^\p ~\mathrm{satisfies}~ (t-t^{'})^2 \geq d^2((r,\theta),(r^{'},\theta^{'}))\},
\end{align*}

where $K(r,t)$ is the backward light cone:

\begin{align*}
\{(r^{'},t^{'}) | t^{'} \leq t,~ r^{'} \geq 0,~ t-t^{'} \geq |r-r^{'}|\}.
\end{align*}

Friedlander's representation becomes

\begin{equation}\label{Urepresentation}
\int_{K(r,t)}\int_{\{\theta^{'}|(t-t^{'})^2 \geq d^2((r,\theta),(r^{'},\theta^{'}))\}}\frac{W^+Q(U)f(r^{'})}{\sqrt{(t-t^{'})^2- d^2((r,\theta),(r^{'},\theta^{'}))}}d\theta^{'}dr^{'}dt^{'}.
\end{equation}

Here $f$ is the function defining the metric on $S^2,~ds^2 = dr^2 + f^2(r)d\theta^2.$ Introducing characteristic coordinates

\begin{align*}
\xi = t-r,~ \eta = t+r
\end{align*}
\\
\begin{center}
\begin{tikzpicture}[scale=1]
\draw [<->,thick] (0,1) node (yaxis) [above] {$t^{\prime}$}
        |- (5,0) node (xaxis) [right] {$r^{\prime}$};
    % Draw two intersecting lines
    \draw [thick](0,0)  -- (0,-5) ;
    \draw [thick](0,0)  -- (-1,0) ;
    \draw (1,-2)  -- (4,-5) ;
    \draw (1,-2)  -- (0,-3) ;
    \draw (0,-5) node[left] {$-1$}  -- (4,-5) ;
    \draw [dashed](-3/2,-3/2) node [left] {$\xi$}  -- (2,-5) ;
    \draw [dashed](-1/2,-1/2) node [left] {$\eta$}  -- (1,-2) ;
    \draw [dashed](1,-2) -- (3/2,-3/2) node [right] {$\xi$} ;
    \draw [dashed](4,-5) -- (9/2,-9/2) node [right] {$-2-\eta$} ;
    \draw [dashed](1,-2) -- (1,0) node [above] {$r$} ;
    \draw [dashed](1,-2) node [below =40, right=2] {$\mathbf{K_1}$}node [below =65, left = -1] {$\mathbf{K_2}$}-- (0,-2) node [left] {$t$} ;
    % Calculate the intersection of the lines a_1 -- a_2 and b_1 -- b_2
    % and store the coordinate in c.
    \draw [->,thick](-2.5,-2.5) --(1,1) node [right] {$\eta^{\prime}$};
    \draw [->,thick] (5,-5)-- (-1,1) node [left] {$\xi^{\prime}$};
\end{tikzpicture}
\\
\end{center}
as in \cite{ST1}, (\ref{Urepresentation}) becomes

\begin{align*}
\int_{\xi}^{\eta}\int_{-2-\eta^\p}^{\xi}\int_{\{\theta^{'}|(t-t^{'})^2 \geq d^2((r,\theta),(r^{'},\theta^{'}))\}}\frac{W^+Q(U)f(r^{'})}{\sqrt{(t-t^{'})^2- d^2((r,\theta),(r^{'},\theta^{'}))}}d\theta^{'}d\xi^\p d\eta^\p.
\end{align*}

By the equivariance assumption $U(r,\theta,t) = \theta^l \cdot U(r,0,t),$ so from now on we shall write $u(r,t)$ for $U(r,0,t)$ and $w^{+}(r,t,r^{'},t^{'},\theta^{'})$ for $W^{+}(r,0,t,r^{'},\theta^{'},t^{'})$. With this notation we have the following representation for $u(\xi,\eta)$ \footnote{From here on we should set $\theta = 0$ but we omit this point for aesthetic reasons.}
\begin{equation}\label{urepresentation}
\int_{\xi}^{\eta}\int_{-2-\eta^{'}}^{\xi}\int_{\{\theta^{'}|(t-t^{'})^2 \geq d^2((r,\theta),(r^{'},\theta^{'}))\}}\frac{w^{+}Q(u)f(r^{'})}{\sqrt{(t-t^{'})^2- d^2((r,\theta),(r^{'},\theta^{'}))}}d\theta^{'}d\xi^\p d\eta^\p.
\end{equation}

Since we want to obtain Holder estimates we will try to bound the derivatives of $u$ with respect to the characteristic coordinates near the north pole. We start by giving an overview of the plan of the proof, which is the same as in \cite{ST1}. With $e$ and $m$ denoting the energy and the momentum respectively, define
\begin{align*}
\A^2 := f(r)(e+m)= \frac{f}{2}(|\partial_\eta u|^2 + \frac{l^2}{f^2}|Au|^2),\\
\B^2 := f(r)(e-m)= \frac{f}{2}(|\partial_\xi u|^2 + \frac{l^2}{f^2}|Au|^2).
\end{align*}

For $\tilde{t} \leq 0$ and a fixed $\delta \in (0,1/2)$ define
\begin{align*}
&Z(\tilde{t}) = \{(t^\p,r^\p)| -1 \leq t^\p \leq \tilde{t},~0\leq r^\p \leq \tilde{t} -t^\p\}\\
&\X(\tilde{t}) = \sup_{Z(\tilde{t})}\big\{(f(r))^{(\frac{1}{2}-\delta)}\A(t,r)\big\}\\
&\X = \X(0).
\end{align*}
Our aim will be to prove that $\X$ is bounded provided the bound $\epsilon$ on energy is small enough. This will be accomplished by showing that $\X(t) \lesssim 1+ \epsilon\X(t)$ with a constant independent of $t.$ From this we can deduce Holder continuity as follows. For $\xi^\p=\xi=$constant
\begin{align*}
&|u(\xi, \eta_2)-u(\xi,\eta_1)|=\big|\int_{\eta_1}^{\eta_2}u_\eta(\xi,\eta^\p)d\eta^\p\big|\lesssim \X\int_{\eta_1}^{\eta_2}|\xi-\eta^\p|^{\delta-1}d\eta^\p\\
&\lesssim |r_2-r_1|^\delta.
\end{align*}
The $\eta^\p=\eta=$constant characteristic is a bit more involved. Using the boundedness of $\X$
%First note that for small $r,$ using the boundedness of $\X$ and integrating the first component of the divergence free momentum-energy tensor over the region bounded between the backward and forward light cones intersecting along the circle with coordinates $(t^\p,r^\p)=(t,r)$ we get (the integration is to give the first equality; I'll work out the details of this in the section on the energy-momentum tensor)
\begin{align*}
&\int_0^r\B^2(t+r-r^\p,r^\p)dr^\p=\int_\xi^\eta\B^2(\eta,\xi^\p)d\xi^\p= \int_\xi^\eta\A^2(\eta^\p,\xi)d\eta^\p\\
&\lesssim\int_\xi^\eta|\eta^\p-\xi|^{2\delta-1}d\eta^\p \lesssim r^{2\delta}
\end{align*}
With $q:=u_\theta\cdot u_t$ denoting the charge density, the second inequality above comes from applying the divergence theorem to the divergence free vector-field $T=(e,-m,q)$ over the region $\{(\eta^\p,\xi^\p,\theta^\p)|\xi\leq\eta^\p,~\xi^\p\leq\eta\}$ in the backward light cone through the origin. Integrating by parts we see that for any $\delta^\p \in (0,\delta)$
\begin{align*}
&\int_{\xi_1}^{\xi_2}|\xi^\p-\eta|^{-2\delta^\p}\B^2(\eta,\xi^\p)d\xi^\p\sim\int_{r_1}^{r_2}r^{-2\delta^\p}\B^2(t_1+r_1-r,r)dr\\
&=\int_{r_1}^{r_2}r^{-2\delta^\p}d\big(\int_{r_1}^{r}\B^2(t_1+r_1-r^{\p\p},r^{\p\p})dr^{\p\p}\big)\\
&=r_2^{-2\delta^\p}\int_{r_1}^{r_2}\B^2(t_1+r_1-r^{\p\p},r^{\p\p})dr^{\p\p}\\
&+2\delta^\p\int_{r_1}^{r_2}r^{-1-2\delta^\p}\big(\int_{r_1}^{r}\B^2(t_1+r_1-r^{\p\p},r^{\p\p})dr^{\p\p}\big)dr\\
\end{align*}
\begin{equation}\label{fluxbound}
\lesssim r_2^{2\delta-2\delta^\p},
\end{equation}
because
\begin{align*}
&\int_{r_1}^{r}\B^2(t_1+r_1-r^{\p\p},r^{\p\p})dr^{\p\p}\leq \int_{0}^{r}\B^2(t_1+r_1-r^{\p\p},r^{\p\p})dr^{\p\p}\\
&=\int_{0}^{r}\B^2(t+r-r^{\p\p},r^{\p\p})dr^{\p\p}\lesssim r^{2\delta^\p}
\end{align*}
as argued above. It follows that
\begin{align*}
&|u(\xi_1,\eta)-u(\xi_2,\eta)|\leq\int_{\xi_1}^{\xi_2}\frac{|\B|}{\sqrt{|\xi^\p-\eta|}}d\xi^\p\\
&\lesssim \Bigg(\int_{\xi_1}^{\xi_2}|\xi^\p-\eta|^{-2\delta^\p}\B^2d\xi^\p\Bigg)^{1/2}\Bigg(\int_{\xi_1}^{\xi_2}|\xi^\p-\eta|^{2\delta^\p-1}d\xi^\p\Bigg)^{1/2}\\
&\lesssim |r_2|^{\delta-\delta^\p}|r_2^{2\delta^\p}-r_1^{2\delta^\p}|^{1/2}=r_2^\delta\big|1-(\frac{r_1}{r_2})^{2\delta^\p}\big|^{1/2}\\
&\leq r_2^{\delta}\big|1-(\frac{r_1}{r_2})^{2\delta}\big|^{1/2}= \big|r_2^{2\delta}-r_1^{2\delta}\big|^{1/2} \lesssim |r_2-r_1|^\delta,
\end{align*}
which finishes the proof of Holder continuity.\\ \\
Going back to the main argument, we start by recording an estimate on the nonlinearity $Q$ which will be used throughout. As computed in the previous section $Q$ is given by
\begin{align*}
\sum \gamma^{ij}B(U)(\partial_i U, \partial_j U) = B(U)(U_t-U_r,U_t + U_r) + \frac{1}{f^2}B(U)(lAU,lAU),
\end{align*}
where $B$ denotes the second fundamental form of the target sphere, and $A = \displaystyle  \bigl(\begin{smallmatrix} 0 & -1 & 0 \\ 1 & 0 & 0 \\ 0 & 0 & 0\end{smallmatrix}\bigr).$\\
We can therefore bound the nonlinearity as
\begin{equation}\label{nonlinearitybound}
|Q|\lesssim \frac{\A\B}{f(r)},
\end{equation}
which can be rewritten as
\begin{align*}
|Q(u(t^\p,r^\p))| \lesssim (f(r^\p))^{\delta - 3/2}\X\B.
\end{align*}
Next we compute the derivative of $u$ in the characteristic direction $\eta$ which appears in the definition of $\A$ and hence $\X:$
\begin{align*}
&\partial_\eta u = \\
&\lim_{\eta^{'} \rightarrow \eta} \int_{-2-\eta^{'}}^{\xi}\int_{\{\theta^{'}|(t-t^{'})^2 \geq d^2((r,\theta),(r^{'},\theta^{'}))\}}\frac{w^{+}Q(U)f(r^{'})}{\sqrt{(t-t^{'})^2- d^2((r,\theta),(r^{'},\theta^{'}))}}d\theta^{'}d\xi^\p +\\
& \int_{\xi}^{\eta}\int_{-2-\eta^\prime}^{\xi} \partial_\eta[\int_{\{\theta^{'}|(t-t^{'})^2 \geq d^2((r,\theta),(r^{'},\theta^{'}))\}}\frac{w^+Q(U)f(r^{'})}{\sqrt{(t-t^{'})^2- d^2((r,\theta),(r^{'},\theta^{'}))}}d\theta^{'}]d\xi^\p d\eta^\p
\end{align*}
\begin{equation}\label{I-II-def}
 =: I + II.
\end{equation}
%We write $I = I^{'} + I^{''}$ where $I^{'}$ is the integral over the region where $r^{'}<2R$ and $I^{''} = I - I^{'}.$ Since $d((r,\theta),(r^{'},\theta^{'}))$ is greater than a fixed constant for $r < R$ and $r^{'} \geq 2R,~I^{''}$ is bounded by a constant independent of $r$ and $r^{'}.$
To bound $I$ we need to control $\int_{\{\theta^{'}|(t-t^{'})^2 \geq d^2((r,\theta),(r^{'},\theta^{'}))\}}\frac{f(r^{'})}{\sqrt{(t-t^{'})^2- d^2((r,\theta),(r^{'},\theta^{'}))}}d\theta^{'}$ near $\eta^{'} =\eta.$ Note that $\eta = \eta^{'}$ means $t-t^{'} = r^{'}-r$ so as $\eta^{'} \rightarrow \eta$ the region of integration $\{\theta^{'}|(t-t^{'})^2 \geq d^2((r,\theta),(r^{'},\theta^{'}))\}$ shrinks to $\{\theta^{'} =\theta\}$ and on the other hand the integrand $\frac{f(r^{'})}{\sqrt{(t-t^{'})^2- d^2((r,\theta),(r^{'},\theta^{'}))}}$ blows up.
The first key step is the following
\begin{lemma}
$\lim_{\eta^{'} \rightarrow \eta} \int_{\{\theta^{'}|(t-t^{'})^2 \geq d^2((r,\theta),(r^{'},\theta^{'}))\}}\frac{f(r^{'})}{\sqrt{(t-t^{'})^2- d^2((r,\theta),(r^{'},\theta^{'}))}}d\theta^{'} \lesssim \sqrt{\frac{r^\p}{r}}$
\end{lemma}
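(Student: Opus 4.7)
The plan is to exploit the near-Euclidean structure of the metric near the pole to reduce the integral, in the limit $\eta'\to\eta$, to an explicit one-dimensional computation. Since $\tau := t - t' \to r' - r$ as $\eta'\to\eta$ and the integration region $\{(t-t')^2 \geq d^2\}$ collapses to the single point $\theta' = \theta$, only a small neighborhood of that point will matter, and on that neighborhood the curved distance $d$ differs from its flat counterpart only by a controlled quantitative error.

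The first ingredient is a small-angle expansion of the geodesic distance. By the triangle comparisons in Remark 2 (upper bound $d \leq d_0$ coming from the non-negative curvature in $\Omega$, matching spherical lower bound $d \geq d_K$ on $S^2(1/\sqrt{2K})$), together with the metric expansion \eqref{metric}, I would establish
\begin{equation*}
d^2((r,0),(r',\theta')) = (r-r')^2 + rr'\theta'^2\bigl(1 + O(r^2 + r'^2 + \theta'^2)\bigr)
\end{equation*}
for $r,r'$ in a neighborhood of the pole and $\theta'$ small. Indeed both $d_0^2 = (r-r')^2 + 2rr'(1-\cos\theta')$ and the spherical distance $d_K$ admit this same leading expansion, which pins down $d$ to the same order.

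With this in hand, write $\tau = r'-r+\delta$ with $\delta\searrow 0$. On the shrinking integration region $\theta'$ is of order $\sqrt\delta$, so $\tau^2 - d^2 = 2(r'-r)\delta - rr'\theta'^2\bigl(1+O(r^2)\bigr) + O(\delta^2)$. Using $f(r')\sim r'$ near the pole and the substitution $x = \sqrt{rr'/[2(r'-r)]}\,\theta'$ converts the integral into
\begin{equation*}
\frac{r'}{\sqrt{rr'}}\bigl(1+O(r^2)\bigr)\int_{-\sqrt\delta}^{\sqrt\delta}\frac{dx}{\sqrt{\delta - x^2}} = \pi\sqrt{\frac{r'}{r}}\bigl(1+O(r^2)\bigr),
\end{equation*}
which gives the desired bound.

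The main obstacle is the distance expansion itself: it requires the triangle comparisons in Remark 2 in their quantitative form, combined with the expansion \eqref{metric} of $f$, to pin down not merely the leading term $(r-r')^2 + rr'\theta'^2$ but a relative error of the right size. In particular one must handle the thin sliver $\{d \leq \tau\}\setminus\{d_0\leq\tau\}$, where the naive Euclidean comparison $d \leq d_0$ fails to bound the integrand --- on this set the curvature-dependent refinement $d_0 - d = O(r^4/d_0)$ is exactly what keeps $\tau^2 - d^2$ from being small enough to produce a divergent contribution. Once the expansion is in hand, the remainder is the explicit substitution above and the standard identity $\int_{-1}^{1}dx/\sqrt{1-x^2} = \pi$.
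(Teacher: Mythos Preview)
Your approach is essentially correct and genuinely different from the paper's. You localize at $\theta'=0$, write down the second-order Taylor expansion $d^2=(r-r')^2+rr'\theta'^2(1+O(r^2+{r'}^2+\theta'^2))$ pinned down by the two-sided triangle comparison $d_K\le d\le d_0$, and then reduce the integral by rescaling to the elementary identity $\int_{-1}^1(1-x^2)^{-1/2}dx=\pi$. This gives the limit $\pi\sqrt{r'/r}\,(1+O(r^2+{r'}^2))$ directly.

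The paper proceeds very differently: it changes variable to $y=d^2$, invokes the eikonal relation $|\nabla y|^2=4y$ to write $y_{\theta'}=f(r')\sqrt{4y-y_{r'}^2}$, and then proves two standalone geometric lemmas --- $y_{r'}=2d\cos\alpha$ (Lemma~2) and $\sin\alpha\sim\sin\alpha_0\sim\sin\alpha_K$ (Lemma~3) --- which turn the kernel into $\int_a^b\frac{dy}{2\sin\alpha\sqrt{y(b-y)}}$. The integral is then bounded explicitly for all $b\ge a$ (not just in the limit), splitting into the cases $r<r'$ and $r>r'$, and only at the end is $b\to a$ taken. Your argument is shorter and more elementary for this particular limit; the paper's detour, however, is not wasted: Lemmas~2 and~3 and their corollary $\frac{\sin\alpha}{r}\sim\frac{\sin\beta}{r'}\sim\frac{\sin\theta'}{d}$ are the workhorses of the entire $II_1$ estimate that follows, where no small-$\theta'$ localization is available.

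Two minor remarks. First, the ``thin sliver'' $\{d\le\tau\}\setminus\{d_0\le\tau\}$ you flag is a concern for a pointwise comparison argument (bounding $(\tau^2-d^2)^{-1/2}$ by $(\tau^2-d_0^2)^{-1/2}$), but your own method does not use that comparison --- you work directly with the expansion of $d^2$, and the square-root singularity at the true boundary $\{d=\tau\}$ is integrable by the same mechanism as in the flat case. So that paragraph is defending against an obstacle your argument never encounters. Second, you tacitly take $r'>r$, which is indeed the only case that survives in the limit $\eta'\to\eta$ (since then $\tau=t-t'\to r'-r\ge0$); the paper's Case~2 ($r>r'$) is extra generality not needed for the lemma as stated.
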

\begin{proof}
We make the change of variable $y (\theta^{'})= d^2((r,\theta),(r^{'},\theta^{'})).$ Note that $y$ is a function of $r^{'}$ (and $r$) as well as $\theta^{'}.$ We also let $b = (t-t^{'})^2$ and $a = (r-r^{'})^2$ to simplify notation. In terms of the new variable $y$ we have:

\begin{align*}
&\int_{\{\theta^{'}|(t-t^{'})^2 \geq d^2((r,\theta),(r^{'},\theta^{'}))\}}\frac{f(r^{'})}{\sqrt{(t-t^{'})^2- d^2((r,\theta),(r^{'},\theta^{'}))}}d\theta^{'}\\
& = \int_{a}^{\min \{b,(r+r^{'})^2\}}\frac{f(r^{'})dy}{(\frac{\partial{y}}{\partial{\theta^{'}}})\sqrt{b-y}}.
\end{align*}
Since $\eta^{'} \rightarrow \eta,~ \min\{b,(r+r^{'})^2\} =b.$ Also as computed in \cite{F} Theorem 1.2.3, equation (1.2.13), pp.17-19, $<\nabla y,\nabla y> = 4y.$ Since $\nabla y=(y_{r^\p},\frac{y_{\theta^\p}}{f^2}),$ this implies $\partial_{\theta^{'}} y= f(r^{'})\sqrt{4y - (\partial_{r^{'}}y)^2 }.$ The last integral above is therefore equal to
\begin{equation}\label{boundarysingularkernel}
\int_a^b\frac{dy}{\sqrt{4y - (\partial_{r^{'}}y)^2 } \sqrt{b-y}}.
\end{equation}
The triangle connecting the north pole, $(r,0)$ and $(r^{'},\theta^{'})$ will be of special interest to us and from now on we will refer to it as $\Delta.$ The angle at the noth pole is $\theta^{'},$ the angle facing the edge of length $r$ will be called $\alpha,$ and the angle opposite the side of length $r^\p$ will be called $\beta$.\\ \\ \\
\begin{center}
\begin{tikzpicture}
 \draw (-1,0) .. controls (-2,-1) and (-2.5,-2) .. node [left = 1pt]{$r$} (-3,-3);
 \draw (-1,0) .. node[left = 35, above =25] {$\theta^\p$} controls (0,-1) and (0.5,-2) .. node [right = 1pt]{$r^\p$} node[below =60, right=5]{$\alpha$}(1,-4);
 \draw (-3,-3) ..  node[left = 45, above =13]{$\beta$} node [above= 10, left=3] {$d$} controls (-2,-3.5) and (0,-3.85) .. (1,-4);
\end{tikzpicture}
\end{center}
\begin{lemma}
$\partial_{r^{'}}y = 2d\cos\alpha$, where $d = \sqrt{y}.$
\end{lemma}
\begin{proof}(of lemma 2)
Given a small positive number $h$ let $\gamma$ be the angle between the geodesic connecting $(r^{'},\theta{'})$ to $(r^{'}+h,\theta{'})$ and the one connecting $(r^{'},\theta{'})$ to $(r,\theta),$ so that $\gamma = \pi - \alpha$ and therefore $\cos\gamma = - \cos\alpha.$\\ \\ \\
\begin{center}
\begin{tikzpicture}
 \draw (-1,0) .. controls (-2,-1) and (-2.5,-2) .. node [left = 1pt]{$r$} (-3,-3);
 \draw  (1,-4) .. controls (1.1,-4.4) and (1.1, -4.6) .. node [left = 6 pt, above=0.5] {$\gamma$} node[left= 6 pt, above = -15 pt]{$\delta$} node[left=-6 pt, above = -10 pt]{$h$} (1.1,-5);
 \draw (-1,0) .. node[left = 35, above =25] {$\theta^\p$} controls (0,-1) and (0.5,-2) .. node [right = 1pt]{$r^\p$} node[below =60, right=5]{$\alpha$} (1,-4);
 \draw (-3,-3) ..  node[left = 45, above =13]{$\beta$}  controls (-2,-3.5) and (0,-3.85) .. (1,-4);
 \draw (1.1,-5) ..  controls (0.,-5) and (-2.5,-4.5) .. (-3,-3);
\end{tikzpicture}
\end{center}

Since our surface has nonnegative curvature in the region we are considering, a classical triangle comparison result gives (this is also a special case of Topagonov's triangle comparison theorem, see \cite{B}, sections 3.2 and 6.4 or \cite{K} section 2.7)
\[ y(r^{'}+h,\theta^{'})- y(r^{'},\theta^{'}) \leq h^2 - 2hd\cos\gamma,\]
and therefore
\begin{align*}
\partial_{r^{'}}y(r^{'},\theta^{'}) = \lim_{h \rightarrow 0^+}\frac{y(r^{'}+h,\theta^{'})- y(r^{'},\theta^{'})}{h}\leq -\lim_{h \rightarrow 0^+} \frac{2hd\cos\gamma}{h} = 2d\cos\alpha.
\end{align*}
On the other hand if $\delta$ is the angle between the geodesic connecting the north pole to $(r^{'}+h, \theta^{'})$ and the one connecting $(r^{'}+h, \theta^{'})$ to $(r,\theta)$
\begin{align*}
&-\partial_{r^{'}}y(r^{'},\theta^{'}) = \lim_{h \rightarrow 0^{+}}\frac{y(r^{'},\theta^{'})-y(r^{'}+h,\theta^{'})}{h}\\
&\leq -\lim_{h \rightarrow 0^{+}} \frac{2h\sqrt{y(r^{'}+h,\theta^{'})}\cos\delta}{h} = -2d\cos\alpha.
\end{align*}
This together with the previous inequality imply the statement of lemma 2.
\end{proof}
It follows form the lemma that (\ref{boundarysingularkernel}) is equal to
\begin{align*}
\int_a^b\frac{dy}{2\sin\alpha \sqrt{y(b-y)}}.
\end{align*}
%(I may be able to show that in the limit $\alpha$ can't be very close to $\pi$ so that we may not really need to consider the second case below. However, it is cleaner to consider the two cases and I expect the computations to be useful later anyway.)
To bound $\sin\alpha$ we consider two cases: when $\alpha \leq \pi/2$ and when $\alpha \geq \pi/2.$  Let $\Delta_0$ and $\Delta_K$ be comparison triangles to $\Delta$ (i.e. with sides of lengths $r,r^{'},$ and $d$) in $\RR^2$ and $S^2(\frac{1}{\sqrt K})$ respectively and $\alpha_0,$ $\alpha_K,$ $\theta_0,$ $\theta_K,$ $\beta_0$ and $\beta_K$ the corresponding angles. We have from triangle comparison theorems that $\alpha_0 \leq \alpha \leq \alpha_K.$\\
\begin{lemma}
$\sin\alpha_0\sim\sin\alpha\sim\sin\alpha_K,$ with similar statements holding for the other angles $\beta$ and $\theta.$
\end{lemma}
\begin{proof}(of lemma 3)
the laws of cosines in the flat and spherical cases give
\begin{align*}
\cos \alpha_0 = \frac{d^2+{r^{'}}^2-r^2}{2dr^{'}}
\end{align*}
and
\begin{align*}
\cos{\alpha_K} = \frac{\cos(\sqrt{K}r)-\cos(\sqrt{K}r^{'})\cos(\sqrt{K}d)}{\sin(\sqrt{K}r^{'})\sin(\sqrt{K}d)}.
\end{align*}
In the following computation we keep in mind that $r,~r^{'}$ and $d$ are all very small and therefore we freely replace $\sin r$  by $r,$ etc. It is to be understood that we are omitting multiplication by a constant which we can make arbitrarily close to one by working in a small enough neighborhood of the north pole. It follows, using the trigonometric identity $\cos q-\cos p=2\sin(\frac{p+q}{2})\sin(\frac{p-q}{2})$, that
\begin{align*}
&\sin^2\alpha_K=\frac{(\cos \sqrt{K}r -\cos\sqrt{K}(d+r^\p))(\cos\sqrt{K}(d-r^\p)-\cos \sqrt{K}r)}{\sin^2\sqrt{K}d\sin^2\sqrt{K}r^\p}\\
&=\frac{4\sin(\sqrt{K}(\frac{d+r^\p+r)}{2})\sin(\frac{\sqrt{K}(d+r^\p-r)}{2})\sin(\frac{\sqrt{K}(r+d-r^\p)}{2})\sin(\frac{\sqrt{K}(r-d+r^\p)}{2})}{\sin^2\sqrt{K}d\sin \sqrt{K}r^\p}\\
&\sim\frac{((d+r^\p)^2-r^2)(r^2-(d-r^\p)^2)}{d^2{r^\p}^2}=\sin^2\alpha_0.
\end{align*}
It now suffices to prove that $\sin\alpha\sim\sin\alpha_0.$ That $\alpha_0\leq\alpha\leq\alpha_K$ is not sufficient to guarantee this because a priori we may have an $\alpha$ which is close to $\pi/2$ and a very small $\alpha_0$ and $\alpha_K=\pi-\alpha_0.$ We need to do a little more work. If $\alpha\leq \pi/2$ then
\begin{align*}
\frac{\sin \alpha_0}{\sin\alpha} \leq 1.
\end{align*}
If $\alpha >\pi/2$
\begin{align*}
\frac{\sin\alpha_0}{\sin\alpha}\lesssim\frac{\sin\alpha_K}{\sin\alpha}\leq 1,
\end{align*}
so it remains to show that $\frac{\sin\alpha}{\sin\alpha_0}\lesssim 1.$ If $\alpha_0\geq\pi/2$
\begin{align*}
\frac{\sin\alpha}{\sin\alpha_0}\leq 1.
\end{align*}
If $\alpha_K\leq\pi/2$
\begin{align*}
\frac{\sin\alpha}{\sin\alpha_0}\lesssim\frac{\sin\alpha}{\sin\alpha_K}\leq 1
\end{align*}
and again we are done. The difficulty is of course when $\alpha_0<\pi/2$ and $\alpha_K>\pi/2.$ Since $\frac{\sin\alpha}{\sin\alpha_0}\lesssim \frac{\sin\alpha}{\sin\alpha_K}$ it suffices to show that $\alpha_K$ is bounded away from $\pi$ if $\alpha_0<\pi/2.$ For this we need to show that $1+\cos\alpha_K$ stays away from zero. Note that since $\cos\alpha_0$ is positive when $\alpha_0<\pi/2$ we have $d^2+{r^\p}^2>r^2$ by flat the law of cosines. We set $K=1$ to simplify notation, but the argument for the general case is exactly the same:
\begin{align*}
&1+\cos\alpha_K=\frac{\sin d \sin r^\p -\cos d \cos r^\p +\cos r}{\sin d \sin r^\p}= \frac{\cos r -\cos(d+r^\p)}{\sin d \sin r^\p}\\
&= \frac{2\sin(\frac{(d+r^\p)-r)}{2})\sin(\frac{(d+r^\p)+r)}{2})}{\sin d \sin r^\p}\geq \frac{c}{2}\Big(\frac{(d+r^\p)^2-d^2}{dr^\p}\Big)\geq c,
\end{align*}
as desired.
\end{proof}
We will not need the following corollary in the proof of lemma 1, but we record it for future use. The proof is immediate from the flat and spherical laws of sines, and the lemma.
\begin{corollary}
$\frac{\sin\beta}{r^\p}\sim\frac{\sin\alpha}{r}\sim\frac{\sin\theta^\p}{d}.$
\end{corollary}
To finish the proof of lemma $1$ we consider two cases:\\
Case $1:r<r^\p.$
In this case $\sqrt{a}=r^\p-r.$ We use $\sin\alpha_0\sim\sin\alpha$ in this case to estimate
%In this case we compare $\Delta$ with its flat counterpart $\Delta_0.$ The Euclidean law of cosines gives
%\begin{align*}
%\cos \alpha_0 = \frac{d^2+{r^{'}}^2-r^2}{2dr^{'}}
%\end{align*}
%and therefore
%\begin{align*}
%&\sin \alpha \geq \sin {\alpha}_{0} = \frac{\sqrt{(2dr^{'}+ d^2+{r^{'}}^2-r^2)(2dr^{'}-(d^2+{r^{'}}^2-r^2))}}{2dr^{'}}\\
%&= \frac{\sqrt{((d+r^{'})^2-r^2)(r^2-(d-r^{'})^2)}}{2dr^{'}}
%\end{align*}
%Note moreover that since $\alpha_0 \leq \pi/2$ and we are considering small values of $\theta$ and hence of  $\theta_0$ we must have $r^{'} \geq r$ so that $a = r^{'}-r.$ To be precise, by the law of sines we have:
%\begin{align*}
%\frac{r^\p}{r}= \frac{\sin \beta_0}{\sin \alpha_0}
%%\geq \frac{c}{\sin \alpha_0}
%\end{align*}
%so if $r > r^\p,~\sin\alpha_0>\sin\beta_0.$ Since $\alpha\leq \pi/2$ this implies that $\beta_0>\pi/2$ and $\pi-\beta_0<\alpha_0.$ But since $\alpha_0+\theta_0+\beta_0=\pi,$ this implies the absurd inequality $\theta_0+\alpha_0<\alpha_0.$ We must therefore have $r^\p \geq r$ and can estimate:
\begin{align*}
&\int_a^b\frac{dy}{2\sin \alpha \sqrt{y(b-y)}} \leq \int_a^b\frac{r^{'}dy}{\sqrt{b-y}\sqrt{(d+r^{'})^2-r^2)}\sqrt{(r^2-(d-r^{'})^2)}}\\
& \leq \int_{a}^{b} \frac{r^{'}dy}{\sqrt{b-y}\sqrt{r^2-(d-r^{'})^2}\sqrt{(r^{'}-r+r^{'})^2-r^2}}\\
 &= \frac{\sqrt{r^{'}}}{2\sqrt{r^{'}-r}}\int_{a}^{b} \frac{dy}{\sqrt{b-y}\sqrt{r^2-(d-r^{'})^2}}.
\end{align*}
Letting $z=\sqrt y,~\mu = \sqrt{a},~$ and $\nu = \sqrt{b}$ this last integral becomes:
\begin{align*}
&\frac{\sqrt{r^{'}}}{\sqrt{\mu}}\int_\mu^\nu\frac{z dz}{\sqrt{\nu^2-z^2}\sqrt{r^2 - (z-\mu -r)^2}}\\
&\leq\frac{\nu\sqrt{r^{'}}}{\sqrt{2}\mu}\int_\mu^\nu \frac{dz}{\sqrt{\nu-z}\sqrt{z-\mu}\sqrt{r-(z-\mu-r)}}\\
&\leq \frac{\nu\sqrt{r^{'}}}{\sqrt{2}\mu\sqrt{2r-(\nu-\mu)}}\int_\mu^\nu\frac{dz}{\sqrt{\nu-z}\sqrt{z-\mu}}.
\end{align*}
To compute the last integral above we make the change of variables $t= \frac{z-\mu}{\nu-\mu}$ to get
\[\int_\mu^\nu\frac{dz}{\sqrt{\nu-z}\sqrt{z-\mu}} = \int_0^1{dt}{\sqrt{t(t-1)}}= \pi.\]
Putting everything together we have obtained
\begin{equation}\label{lessthannintybound}
\frac{1}{2}\int_a^b\frac{dy}{\sqrt{y}\sqrt{b-y}\sin\alpha} \leq \frac{\pi\nu\sqrt{r^{'}}}{\sqrt{2}\mu\sqrt{2r-(\nu-\mu)}}.
\end{equation}
Case $2:r>r^\p.$
In this case $a=r-r^\p$ and we use the comparison $\sin\alpha_K=\sin\alpha.$  We set $K=1$ as its exact value is irrelevant in our estimates.
\begin{align*}
&\int_a^b\frac{dy}{\sqrt{y}\sqrt{b-y}\sin\alpha}\\
&\leq \int_a^b\frac{(\sin r^{'})(\sin d)dy}{\sqrt{y(b-y)}\sqrt{\cos r - \cos(r{'}+d)}\sqrt{\cos(r^{'} -d)- \cos r}}\\
&= \int_\mu^\nu\frac{2\sin r^{'} \sin z}{\sqrt{\nu -z}\sqrt{\nu+z}\sqrt{\cos r - \cos(z- \mu +r)}\sqrt{\cos(z+\mu-r) -\cos r}}dz\\
&\lesssim \frac{\sin \nu \sin r^{'}}{\sqrt{\mu}\sqrt{\cos(\nu+\mu-r) -\cos r}} \int_\mu^\nu\frac{dz}{\sqrt{\nu-z}\sqrt{\sin (\frac{z-\mu}{2})\sin(\frac{z+r+r^\p}{2})}}\\
&\lesssim \frac{\sin \nu \sin r^{'}}{\sqrt{\sin r}\sqrt{\mu}\sqrt{\cos(\nu+\mu-r) -\cos r}}\int_\mu^\nu\frac{dz}{\sqrt{\nu -z}\sqrt{z - \mu}}
\end{align*}
\begin{equation}\label{morethannintybound}
\sim \frac{\sin \nu \sin r^{'}}{\sqrt{\sin r}  \sqrt{\mu}\sqrt{\cos(\nu+\mu-r) -\cos r}}.
\end{equation}
Note that
\begin{align*}
&\lim_{\mu \rightarrow \nu}(\cos(\nu+\mu-r) -\cos r) = \cos((r-r^\p)-r^\p) - \cos((r-r^\p)+r^\p)\\
&= 2\sin r^\p\sin(r-r^\p).
\end{align*}
%We can bound the term outside the integral as follows
%\begin{align*}
%\cos(\nu+\mu-r) -\cos r = 2(\sin^2\frac{r}{2} - \sin^2\frac{r-2r^{'}}{2})\sim 2(\frac{r^2}{4}- \frac{(r-2r^{'})^2}{4})= %2r^{'}(r-r^{'})
%\end{align*}
%Putting everything together we get
%\begin{align*}
%&\int_a^b\frac{dy}{\sqrt{y}\sqrt{a-y}\sin\alpha} \lesssim \frac{2\nu\sin \nu \sin r^{'}}{\sqrt{\sin r} \mu %\sqrt{\mu}\sqrt{2r^{'}(r-r^{'})}}\int_\mu^\nu\frac{dz}{\sqrt{\nu -z}\sqrt{z - \mu}}\\
%&\sim \frac{2\nu\sin \nu \sin r^{'}}{\sqrt{\sin r} \mu \sqrt{\mu}\sqrt{2r^{'}(r-r^{'})}}
%\end{align*}
With this in mind, taking the limit as $\mu$ approaches $\nu$ in equations (\ref{lessthannintybound}) and (\ref{morethannintybound}) we get
\begin{align*}
\lim_{\eta^\p \rightarrow \eta}\int_a^b\frac{dy}{\sqrt{y}\sqrt{a-y}\sin\alpha} \lesssim \sqrt{\frac{r^{'}}{r}},
\end{align*}
as desired.
\end{proof}
%MOVED TO THE BEGINNING: The next step is to estimate the nonlinearity $Q(U).$ As previously computed $Q$ is given by
%\begin{align*}
%\sum \gamma^{ij}B(U)(\partial_i U, \partial_j U)) = B(U)(U_t-U_r,U_t + U_r) + \frac{1}{f^2}B(U)(lAU,lAU)
%\end{align*}
%where $B$ denotes the second fundamental form of the target sphere, and $A = \displaystyle  \bigl(\begin{smallmatrix} 0 & -1 & 0 \\ 1 & 0 & 0 \\ 0 & 0 & 0\end{smallmatrix}\bigr)$ as in the elliptic section.\\
%Recall that we had defined $\A$ and $\B$ as
%\begin{align*}
%\A^2 = f(r)(e+m)= \frac{f}{2}(|\partial_\eta u|^2 + \frac{l^2}{f^2}|Au|^2)\\
%\B^2 = f(r)(e-m)= \frac{f}{2}(|\partial_\xi u|^2 + \frac{l^2}{f^2}|Au|^2)
%\end{align*}
%We can now bound the nonlinearity as
%\begin{equation}\label{nonlinearitybound}
%|Q|\lesssim \frac{\A\B}{f(r)}.
%\end{equation}
%which can be rewritten as
%\begin{align*}
%|Q(u(t^\p,r^\p))| \lesssim (f(r^\p))^{\delta - 3/2}\X\B.
%\end{align*}
Going back to (\ref{I-II-def}) and using (\ref{nonlinearitybound}) (or the alternative version given right after (\ref{nonlinearitybound})) we have
\begin{align*}
&|I| \lesssim \frac{\X}{\sqrt{f(r)}}\int_{-2-\eta}^\xi (f(r^\p))^{\delta-1}\B|_{\eta^\p = \eta} d\xi^\p\\
&\lesssim \frac{\X}{\sqrt{f(r)}} \big{(}\int_{-2-\eta}^{\xi} \B^2 d \xi^\p \big{)}^{1/2}\big(\int_{-2-\eta}^{\xi}(f(r^\p))^{2\delta-2}d\xi^\p\big)^{1/2}\\
&\sim\frac{\X}{\sqrt{f(r)}} \big{(}\int_{-2-\eta}^{\xi} \B^2 d \xi^\p \big{)}^{1/2}\big(\int_{r}^{r+t+1}{r^\p}^{2\delta-2}dr^\p\big)^{1/2}
\end{align*}
\begin{equation}\label{Ibound}
\lesssim_\delta r^{\delta-1}\X\epsilon.
\end{equation}
The bound on the integral involving $\B^2$ comes from the energy-flux inequality
\begin{align*}
\int_{C_t}(e-m)d\mu_C \leq E(t) < \epsilon^2.
\end{align*}
The last inequality is just the small energy hypothesis.\\
The more difficult step is obtaining a similar bound for $II.$ We start with a remark about symmetry.
\begin{remark}
By the same arguments as above $y_{\theta}((r,0),(r^\p, \theta^\p))= -f(r)\sqrt{4y-y_r^2} = -2f(r)d\sin\beta$ (note that we are working with $\theta =0$ and $\theta^\p\geq0$ so $y_\theta$ has to be negative and $y_{\theta^\p}$ positive). Now
\begin{align*}
&y_{\theta^\p}((r,0),(r^\p, \theta^\p))=\lim_{h \rightarrow 0}\frac{y((r,0),(r^\p, \theta^\p+h))-y((r,0),(r^\p, \theta^\p))}{h}\\
&=\lim_{h \rightarrow 0}\frac{y((r,-h),(r^\p, \theta^\p))-y((r,0),(r^\p,\theta^\p))}{h}=-y_{\theta}((r,0),(r^\p, \theta^\p)),
\end{align*}
and we get the important equality
\begin{align*}
f(r)\sin\beta=f(r^\p)\sin\alpha.
\end{align*}
Now recall that the Gauss-Bonnet theorem implies that $\lim_{\theta^\p\rightarrow0}(\alpha+\beta)=\pi.$ We want to examine the non-generic case where $\alpha$ and $\beta$ do not approach $0$ and $\pi$ more carefully.
\begin{align*}
\alpha(0)=\pi-\beta(0) \Rightarrow \sin\alpha(0)=\sin\beta(0).
\end{align*}
If $\alpha$ and $\beta$ are not $0$ or $\pi$ then $\sin\alpha$ and $\sin\beta$ are nonzero and the above equality implies that $f(r)=f(r^\p).$ Since $f$ is one to one (at least near the poles where all of our computations are done) $r=r^\p.$ We conclude by symmetry that $\alpha(0)=\beta(0)=\pi/2.$
\end{remark}
Going back to the main argument we divide the cone $K$ into two parts $K_1$ and $K_2$ corresponding to the regions $t-t^\p\geq r+r^\p$ and $t-t^\p \leq r+r^\p$ respectively. We also divide $II$ into the two corresponding integrals $II_1+II_2.$ Recall that in the flat case \cite{CT} proceed by writing
\begin{align*}
&\int_{\{\theta^{'}|(t-t^{'})^2 \geq d^2((r,\theta),(r^{'},\theta^{'}))\}\cap K_1}\frac{r^{'}}{\sqrt{(t-t^{'})^2- d^2((r,\theta),(r^{'},\theta^{'}))}}d\theta^{'}\\
&= 2\int_0^\mu \frac{r^\p}{\sqrt{2rr^\p(\cos\theta^\p-\cos\mu)}},
\end{align*}

where $\mu = \mu(r,r^\p,t-t^\p)$ is the positive angle such that $y(r,r^\p,\mu)= (t-t^\p)^2.$ The derivative inside the integral in $II_1$ is then written as
\begin{align*}
&\partial_{\eta}\Big[\int_0^\mu \frac{r^\p d\theta^\p}{\sqrt{2rr^\p(\cos\theta^\p-\cos\mu)}}\Big]\\
&=(\partial_{\eta}\mu)\partial_{\mu}\Big[\int_0^\mu \frac{r^\p d\theta^\p}{\sqrt{2rr^\p(\cos\theta^\p-\cos\mu)}}\Big] +  \int_0^\mu \partial_\eta\Big[\frac{r^\p}{\sqrt{2rr^\p(\cos\theta^\p-\cos\mu)}}\Big]d\theta^\p\\
&=(\partial_{\eta}\mu)\partial_{\mu}\Big[\int_0^\mu \frac{r^\p d\theta^\p}{\sqrt{2rr^\p(\cos\theta^\p-\cos\mu)}}\Big] -  \frac{1}{r\sqrt{2rr^\p}}\int_0^\mu \frac{r^\p d\theta^\p}{\sqrt{(\cos\theta^\p-\cos\mu)}}.
\end{align*}

Our first goal is to write $\partial_{\eta}[\int\frac{Q(U)w^{+}f(r^{'})}{\sqrt{(t-t^{'})^2- d^2((r,\theta),(r^{'},\theta^{'}))}}d\theta^{'}]$ as the sum of two similar terms. The difficulty lies in proving the following

\begin{lemma}
 $|\partial_{\eta}\frac{1}{\sqrt{y(\mu)-y(\theta^\p)}}| \lesssim \frac{1}{r\sqrt{rr^\p(\cos\theta^\p -\cos\mu)}}$ in $K_1.$
\end{lemma}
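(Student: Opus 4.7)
The plan is to differentiate, split the resulting expression into numerator and denominator, and estimate each using the geometric comparison machinery of the preceding remarks and lemmas together with the $K_1$ hypothesis $\tau := t-t' \geq r+r'$.

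The first step is chain-rule differentiation. Since $y(\mu)=\tau^2$ depends only on $(t,t')$ while $y(\theta')=d^2((r,0),(r',\theta'))$ depends only on $(r,r',\theta')$, and since $\partial_\eta = \tfrac12(\partial_t+\partial_r)$ with $(\xi,r',t',\theta')$ held fixed, one obtains
\begin{align*}
\partial_\eta\bigl(y(\mu)-y(\theta')\bigr) = \tau - \tfrac12\partial_r y(\theta') = \tau - d\cos\beta,
\end{align*}
where the identification $\tfrac12\partial_r y = d\cos\beta$ is just the $r$-analogue of Lemma 2 (holding $r',\theta'$ fixed and varying $r$, so that the relevant vertex of $\Delta$ is $(r,0)$ rather than $(r',\theta')$). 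Hence
\begin{align*}
\left|\partial_\eta\tfrac{1}{\sqrt{y(\mu)-y(\theta')}}\right| = \frac{|\tau-d\cos\beta|}{2(\tau^2-d^2)^{3/2}},
\end{align*}
and the lemma will follow from two inequalities: (a) a lower bound $\tau^2-d^2 \geq 2rr'(\cos\theta'-\cos\mu)$ for the denominator, and (b) an upper bound $|\tau-d\cos\beta|\cdot r \lesssim \tau^2-d^2$ for the numerator.

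For (a), since $\Omega$ has nonnegative curvature, Toponogov's theorem (Remark 1, first comparison) applied to $\Delta$ gives $d \leq d_0$, where $\Delta_0$ is the Euclidean comparison triangle with the same two sides $r,r'$ and included angle $\theta'$, i.e.\ $d_0^2 = r^2+r'^2-2rr'\cos\theta'$. With $\cos\mu := (r^2+r'^2-\tau^2)/(2rr')$ interpreted formally (possibly $\leq -1$ in $K_1$), this yields $\tau^2-d^2 \geq \tau^2-d_0^2 = 2rr'(\cos\theta'-\cos\mu) \geq 0$.

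For (b), the flat analogue $(\tau - d_0\cos\beta_0)r \leq \tau^2 - d_0^2$ is direct: the flat law of cosines gives $d_0\cos\beta_0 = r - r'\cos\theta'$, and the inequality rearranges to $(\tau-r)^2 \geq r'^2$, which is exactly $\tau \geq r+r'$. Transferring to the curved setting requires controlling the difference $d\cos\beta - d_0\cos\beta_0$. This is the main obstacle: Lemma 3 compares only sines of the angles, while we need quantitative control of cosines, and in borderline configurations $\tau$ close to $r+r'$ and $\theta'$ close to $0$ or $\pi$ the quantity $\tau^2-d_0^2$ may vanish, so we cannot rely on a uniform gap to absorb errors. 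My plan is to Taylor-expand $d$ and $\beta$ about their flat values using the metric expansion \eqref{metric} together with the spherical law of cosines for $\Delta_K$ (sphere of curvature $2K$, as in Remark 1), observing that at the degenerate configurations the triangle is geodesically radial, so the curvature corrections to both $d$ and $\cos\beta$ are of higher order in $r,r',\theta'$ than the quantities generating the flat inequality; a case split according to whether $\beta$ is near $0$, $\pi/2$ or $\pi$, paralleling the analysis in Lemma 3, should handle the sign changes in $\cos\beta$. Combining (a) and (b) gives
\begin{align*}
\frac{|\tau-d\cos\beta|}{2(\tau^2-d^2)^{3/2}} \lesssim \frac{1}{r\sqrt{\tau^2-d^2}} \leq \frac{1}{r\sqrt{2rr'(\cos\theta'-\cos\mu)}},
\end{align*}
which is the claim.
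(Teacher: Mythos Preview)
There are two interlocking errors that make step (b) unrecoverable.

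First, you have misidentified $K_1$. Despite the paper's typo in the definition, $K_1$ is the region $\tau=t-t'\le r+r'$: this is forced by the fact that $\mu\in(0,\pi)$ is used throughout the $K_1$ analysis, and by the $K_2$ computation where the $\theta'$--integration runs over all of $[0,\pi]$ (which requires $\tau\ge r+r'$). With the correct hypothesis $\tau\le r+r'$, your rearranged flat inequality $(\tau-r)^2\ge r'^2$ is simply false.

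Second, and more fundamentally, the derivative in the lemma is taken with $\mu$ held \emph{fixed} (the paper says so explicitly right after the statement). Then $y(\mu)=d^2((r,0),(r',\mu))$ is a function of $r$, and the numerator is
\[
\partial_\eta\bigl(y(\mu)-y(\theta')\bigr)=y_r(\mu)-y_r(\theta')=2d(\mu)\cos\beta(\mu)-2d(\theta')\cos\beta(\theta'),
\]
not your $\tau-d\cos\beta$. The difference matters: your numerator does not vanish at $\theta'=\mu$ (it equals $\tau(1-\cos\beta(\mu))>0$ there), while the denominator $\tau^2-d^2$ does; that is exactly why your flat (b) breaks at $\theta'=\mu_0$. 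With the correct fixed-$\mu$ numerator the flat case is an \emph{identity}: $y_{0,r}(\mu_0)-y_{0,r}(\theta')=2r'(\cos\theta'-\cos\mu_0)$ while $y_0(\mu_0)-y_0(\theta')=2rr'(\cos\theta'-\cos\mu_0)$, so the ratio is exactly $1/r$.

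The paper's proof exploits precisely this cancellation structure. Using the two-sided comparison $\dfrac{\cos\theta'-\cos\mu}{y(\mu)-y(\theta')}\sim\dfrac{1}{rr'}$ (which, incidentally, is what (a) requires for the \emph{curved} $\mu$; your Toponogov step only gives the flat $\mu_0$), it reduces to bounding
\[
\frac{d(\theta')\cos\beta(\theta')-d(\mu)\cos\beta(\mu)}{r'(\cos\theta'-\cos\mu)}
\]
uniformly for $\theta'\in[0,\mu]$. Since numerator and denominator both vanish at $\theta'=\mu$, it invokes the observation that at an interior extremum the ratio equals the ratio of $\theta'$--derivatives, and then controls $(d\cos\beta)_{\theta'}/(r'\sin\theta')$ via sharp two-sided estimates on $\beta_{\theta'}$ coming from flat and spherical comparison triangles together with Gauss--Bonnet. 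Your Taylor-expansion sketch does not engage with this vanishing-at-$\theta'=\mu$ mechanism, and cannot succeed as written.
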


Of course here we treat $\mu$ as a constant so in particular $\partial_\eta = \partial_r$ as there is no time dependency.\\ \\
$\mathbf{Notation:}$ From now on we write $s$ for $t-t^\p.$\\ \\
We need two auxiliary lemmas.
\begin{lemma}
%$(i)$ There exist universal constants (i.e. independent of $r$ and $r^\p$) $\Theta_0 > 0$ and $\Theta_1 < \pi$ such that if $\theta_K \geq \Theta_1$ then $\theta_0 \geq \pi/2$ and if $\theta_0 \leq \Theta_0$ then $\theta_K \leq \pi/2.$ The corresponding statements for the angles $\alpha$ and $\beta$ are also correct.\\
%$(ii)~\frac{\sin\mu}{s} \sim \frac{\sin\alpha}{r}$\\
$\frac{\cos\theta^\p -\cos\mu}{y(\mu)-y(\theta^\p)}\sim\frac{1}{rr^\p}.$
\end{lemma}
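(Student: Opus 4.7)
The plan is to derive the comparison directly from the formula for $\partial_{\theta^\p} y$ established in the proof of Lemma 1, combined with Corollary 1 (the law of sines, up to comparable constants) and one application of the fundamental theorem of calculus. No geometric input beyond what has already been set up is needed.

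First I would recall that combining Lemma 2 ($\partial_{r^\p}y = 2d\cos\alpha$) with the identity $\langle\nabla y,\nabla y\rangle = 4y$ used in the proof of Lemma 1 yields
\[
\partial_{\theta^\p} y \;=\; 2 f(r^\p)\, d\, \sin\alpha,
\]
where $\alpha = \alpha(\theta^\p)$ is the angle at the vertex $(r^\p,\theta^\p)$ of the triangle $\Delta$ and $d=\sqrt{y}$. Applying Corollary 1 in the form $\sin\alpha\sim r\sin\theta^\p/d$, together with the expansion $f(r^\p)\sim r^\p$ valid near the pole, this collapses to the pointwise comparison
\[
\partial_{\theta^\p} y(\theta^\p)\;\sim\; 2\, r\, r^\p\, \sin\theta^\p,
\]
with constants that are uniform in $\theta^\p\in(0,\mu)$.

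Next I would integrate this from $\theta^\p$ to $\mu$. Since $\sin\tau>0$ on $(0,\pi)$ and $\mu\leq\pi$ in $K_1$ (the region in which the equation $y(\mu)=(t-t^\p)^2$ admits a solution), the uniform pointwise comparison passes to the integrated one:
\[
y(\mu)-y(\theta^\p) \;=\; \int_{\theta^\p}^{\mu}\partial_\tau y(\tau)\, d\tau \;\sim\; 2rr^\p\int_{\theta^\p}^{\mu}\sin\tau\, d\tau \;=\; 2rr^\p(\cos\theta^\p-\cos\mu).
\]
Rearranging immediately yields the desired estimate
\[
\frac{\cos\theta^\p-\cos\mu}{y(\mu)-y(\theta^\p)}\;\sim\;\frac{1}{rr^\p}.
\]

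The only subtlety is ensuring that the $\sim$ in the pointwise bound for $\partial_\tau y$ is truly uniform in $\tau\in(\theta^\p,\mu)$, so that it integrates cleanly; but this is precisely what Corollary 1 delivers in the small neighborhood of the pole where we are working, together with the asymptotic $f(r^\p)\sim r^\p$. I do not expect any serious obstacle: the argument is essentially a one-line consequence of the previously established geometric lemmas once $\partial_{\theta^\p}y$ is written in the form above.
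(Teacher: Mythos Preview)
Your proof is correct and notably cleaner than the paper's own argument. Both proofs rest on the identity $\partial_{\theta^\p}y = 2f(r^\p)\,d\sin\alpha$ together with Corollary~1, but they deploy these facts differently. The paper sets $m(\theta^\p)=\frac{\cos\theta^\p-\cos\mu}{y(\mu)-y(\theta^\p)}$ and bounds it by examining three cases separately: the limit $\theta^\p\to\mu$ (via l'H\^opital and Corollary~1), interior critical points of $m$ (where $m=\sin\theta^\p/y_{\theta^\p}$, reducing to the previous case), and the endpoint $\theta^\p=0$ (which requires a different comparison, namely $y_K\le y\le y_0$ for triangles with two sides $r,r^\p$ meeting at angle $\theta^\p$). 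Your route bypasses this trichotomy entirely: once Corollary~1 gives $d\sin\alpha\sim r\sin\theta^\p$ uniformly, you have $\partial_{\theta^\p}y\sim rr^\p\sin\theta^\p$ pointwise, and integration over $[\theta^\p,\mu]$ finishes the proof in one stroke. The gain is that you never need the second type of comparison triangle, and you avoid the critical-point calculus. The paper's approach, on the other hand, foreshadows the ``Observation~1'' technique used repeatedly in later lemmas (Lemmas~6 and~7), so it has some expository value as a warm-up; but as a standalone proof of this lemma, your argument is more economical.
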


\begin{proof}(of lemma 7)
 Let $m(\theta^\p):=\frac{\cos\theta^\p -\cos\mu}{y(\mu)-y(\theta^\p)},$ defined for $\theta^\p \in (0,\mu).$ To find upper and lower bounds for $m$ we need to consider three cases: $\theta^\p =0,~\theta^\p\rightarrow \mu,$ and $m^\p(\theta^\p)=0.$
First consider the case where $\theta^\p \rightarrow \mu:$
\begin{align*}
&\lim_{\theta^\p \rightarrow \mu}\frac{\cos \theta^\p - \cos \mu}{y(\mu) -y(\theta^\p)} = \lim_{\theta^\p \rightarrow \mu}\frac{\sin(\theta^\p)}{y_{\theta^\p}(\theta^\p)}\\
&\lim_{\theta^\p \rightarrow \mu}\frac{\sin\theta^\p}{2df(r^\p)\sin\alpha}
\end{align*}
\begin{equation}\label{disanceanglebound}
\sim \frac{\sin\mu}{sr^\p\sin\alpha(\mu)} \sim \frac{1}{rr^\p},
\end{equation}
from Corollary $1.$ The next case we consider is $m^\p(\theta^\p) =0.$ Note that
 \begin{align*}
 0=m^\p(\theta^\p)=\frac{1}{(y(\mu)-y(\theta^\p))^2}[-\sin\theta^\p(y(\mu)-y(\theta^\p))+y_{\theta^\p}(\theta^\p)(\cos\theta^\p-\cos\mu)],
 \end{align*}
 so at a point where the derivative is zero
 \begin{align*}
 m(\theta^\p)=\frac{\sin\theta^\p}{y_{\theta^\p}(\theta^\p)}=\frac{\sin\theta^\p}{2f(r^\p)d(\theta^\p)\sin\alpha(\theta^\p)}.
 \end{align*}
 But we already estimated this term in (\ref{disanceanglebound}).\\
 It remains to consider the $\theta^\p=0$ case. Note that $y(0)=y_0(0)=y_K(0)=(r-r^\p)^2.$ Here we are using comparison triangles with two sides of length $r$ and $r^\p$ meeting with an angle of $\theta^\p,$ and the comparison theorems imply $y_K\leq y\leq y_0.$ We get
 \begin{align*}
 &m(0) = \frac{1-\cos\mu}{y(\mu)-y(0)}\leq\frac{1-\cos\mu}{y_K(\mu)-y_K(0)}\lesssim \frac{1}{rr^\p}.\\
 %&=\frac{1-\cos\mu}{\cos\sqrt{Ky_K(\mu)} - \cos\sqrt{Ky_K(0)}}\frac{\cos\sqrt{Ky_K(\mu)} - \cos\sqrt{Ky_K(0)}}{y_K(\mu)-y_K(0)}\\
% &\sim\frac{1}{\sin\sqrt{K}r\sin\sqrt{K}r^\p}\frac{\sin\sqrt{K\tilde{y}_K}}{\sqrt{\tilde{y}_K}}
 \end{align*}
% for some $\tilde{y}_K \in (y_K(0),y_K(\mu)).$
 This gives the upper bound. The lower bound computation is similar
 \begin{align*}
 &m(0) = \frac{1-\cos\mu}{y(\mu)-y(0)}\geq\frac{1-\cos\mu}{y_0(\mu)-y_0(0)}\geq \frac{c}{rr^\p}.
 \end{align*}
 \end{proof}
 \begin{lemma}
$(i)~ \frac{\sin\beta}{d} \leq \alpha_r \leq \frac{\sqrt{K}\sin\beta}{\sin \sqrt{K}d}.$ $(ii)~ \frac{-\sqrt{K}f(r)\cos\beta}{\sin \sqrt{K}d}\leq \alpha_{\theta^\p} \leq \frac{-f(r)\cos\beta}{d}.$\\
$(iii)~\frac{-\sqrt{K}f(r^\p)\cos\alpha}{\sin \sqrt{K}d}\leq \beta_{\theta^\p} \leq \frac{-f(r^\p)\cos\alpha}{d}.$
\end{lemma}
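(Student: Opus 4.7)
The plan is to derive all three bounds from a common "small triangle + comparison + law of sines" template: for each of the three derivatives, form an auxiliary geodesic triangle in $M$ whose short side has length $\sim h$, apply the comparison theorems of Remark 4 to bound the angle opposite the short side between its flat and spherical comparison values, and then take $h\to 0$. Part (ii) is the exception, because the angle $\alpha$ is measured at the vertex being moved; there we use Gauss--Bonnet to reduce to (iii). The common analytic tool is the first variation of arc length, which applied to the geodesic from $(r,0)$ to $(r^\p,\theta^\p)$ yields $d_r=\cos\beta$ and $d_{\theta^\p}=f(r^\p)\sin\alpha$ (the second being consistent with the formula for $y_{\theta^\p}$ in Remark 5).

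For (i), let $T_h$ be the geodesic triangle in $M$ with vertices $(r,0)$, $(r+h,0)$, $(r^\p,\theta^\p)$. Its side lengths are $h$, $d(r)$, $d(r+h)$; its angle at $(r^\p,\theta^\p)$ equals $|\alpha(r+h)-\alpha(r)|$ (the angular separation of the two geodesics to $(r,0)$ and $(r+h,0)$, both measured from the fixed geodesic to the pole), and its angle at $(r,0)$ equals $\pi-\beta(r)$. Remark 4 bounds $|\alpha(r+h)-\alpha(r)|$ between the angles $\omega_0$ and $\omega_K$ opposite the side of length $h$ in the flat and $K$-spherical comparison triangles with the same three side lengths, and the two laws of sines give
\begin{align*}
\omega_0=\frac{h\sin\sigma_0}{d(r+h)},\qquad \omega_K=\frac{\sin(\sqrt{K}h)\sin\sigma_K}{\sin(\sqrt{K}d(r+h))},
\end{align*}
where $\sigma_*$ is the comparison angle opposite the side of length $d(r+h)$. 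As $h\to 0$, the law of cosines applied to the collapsing comparison triangles (using $d(r+h)=d(r)+h\cos\beta+O(h^2)$) forces $\sigma_0,\sigma_K\to\pi-\beta(r)$, so dividing by $h$ and sending $h\to0$ yields the two bounds of (i).

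Part (iii) proceeds in exact parallel: replace $T_h$ by the triangle with vertices $(r,0)$, $(r^\p,\theta^\p)$, $(r^\p,\theta^\p+h)$, whose side lengths are $d(\theta^\p)$, $d(\theta^\p+h)$, $\ell(h)=f(r^\p)h+O(h^3)$. Its angle at $(r,0)$ equals $|\beta(\theta^\p+h)-\beta(\theta^\p)|$, and now the comparison angle at $(r^\p,\theta^\p)$ limits to $\pi/2+\alpha$ (its sine tending to $\cos\alpha$), so the law of sines in the flat and spherical comparison triangles yields the stated bounds on $\beta_{\theta^\p}$ after dividing by $h$.

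For (ii), the small-triangle trick does not apply because the angle $\alpha$ sits at the moving vertex. Instead, apply Gauss--Bonnet to the main triangle $\Delta$: $\alpha+\beta+\theta^\p=\pi+\int_\Delta K\,dA$, so differentiating in $\theta^\p$ (with $r,r^\p$ fixed) gives
\begin{align*}
\alpha_{\theta^\p}=-1-\beta_{\theta^\p}+\partial_{\theta^\p}\!\!\int_\Delta K\,dA.
\end{align*}
Inserting the bounds of (iii) for $-\beta_{\theta^\p}$, bounding the curvature-integral derivative between $0$ and $K\cdot\partial_{\theta^\p}\mathrm{Area}$, and using the flat projection-of-sides identity $d=f(r)\cos\beta+f(r^\p)\cos\alpha$ (together with its spherical analogue) to rewrite $f(r^\p)\cos\alpha/d-1=-f(r)\cos\beta/d$, assembles the bounds into the stated form. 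The main obstacle is this last bookkeeping step: one must check that the curvature correction from Gauss--Bonnet combines cleanly with the inequality from (iii) and the projection identity to produce exactly the expressions $-f(r)\cos\beta/d$ and $-\sqrt{K}f(r)\cos\beta/\sin(\sqrt{K}d)$ rather than their asymmetric cousins involving $\cos\alpha$ and $f(r^\p)$. Tracking the signs of $\cos\alpha$ and $\cos\beta$ (which may change when the angles cross $\pi/2$) is also a non-trivial piece of the argument, and is handled by splitting into cases exactly as in the analysis of $\sin\alpha$ in Lemma 3.
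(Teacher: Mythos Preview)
Your treatment of (i) is essentially the paper's argument (you use the law of sines where the paper uses the law of cosines, but the limiting computation is the same). Your direct small-triangle argument for (iii) also works, though the paper takes a different and shorter route: it differentiates the exact identity $f(r^\p)\sin\alpha=f(r)\sin\beta$ of Remark~5 in $\theta^\p$ to get $\beta_{\theta^\p}=\frac{f(r^\p)\cos\alpha}{f(r)\cos\beta}\,\alpha_{\theta^\p}$, and then reads (iii) off from (ii).

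The real problem is (ii). Your Gauss--Bonnet route requires, at the end, the ``projection identity'' $d=f(r)\cos\beta+f(r^\p)\cos\alpha$ (and a spherical analogue) to convert $-1+f(r^\p)\cos\alpha/d$ into $-f(r)\cos\beta/d$. But that identity is the Euclidean projection formula $c=a\cos B+b\cos A$ applied with the \emph{curved} angles $\alpha,\beta$ and with $f(r),f(r^\p)$ in place of the side lengths $r,r^\p$; it is not valid on a general surface, and there is no reason the curvature correction $\partial_{\theta^\p}\!\int_\Delta k\,dA=\int_0^{r^\p}k f\,dr^{\p\p}$ should close the gap exactly. You flag this yourself as ``the main obstacle'' and do not carry it out; as stated, the argument for (ii) is incomplete.

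The paper avoids this entirely. It first observes, by the symmetry of the argument in (i), that $\frac{\sin\alpha}{d}\le\beta_{r^\p}\le\frac{\sqrt K\sin\alpha}{\sin\sqrt K d}$. Then it uses the equality of mixed partials $y_{r^\p\theta^\p}=y_{\theta^\p r^\p}$ (with $y_{\theta^\p}=2df(r)\sin\beta$ and $y_{r^\p}=2d\cos\alpha$, both known exactly) to derive the \emph{exact} relation
\[
\alpha_{\theta^\p}=-\frac{f(r)\cos\beta}{\sin\alpha}\,\beta_{r^\p},
\]
from which (ii) follows immediately upon substituting the bounds on $\beta_{r^\p}$. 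This mixed-partial trick is the missing idea: it replaces your approximate Gauss--Bonnet bookkeeping by an algebraic identity, and it handles the sign issues automatically.
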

\begin{proof}(of lemma 8)
$(i)$ Let all variables be defined as in the following picture
\begin{center}
\begin{tikzpicture}
 \draw (-1,0) .. controls (-2,-1) and (-2.5,-2) .. node [left = 1pt]{$r$} (-3,-3);
 \draw (-3,-3) .. controls (-3.2,-3.4) and (-3.25, -3.6) .. node [left = 4 pt, above=0.5] {$h$}(-3.3,-3.65);
 \draw (-1,0) .. node[left = 35, above =25] {$\theta$} controls (0,-1) and (0.5,-2) .. node [right = 1pt]{$r^\p$} node[below =60, right=5]{$\alpha$} node[below =67, left=2]{$\tilde{\alpha}$}(1,-4);
 \draw (-3,-3) ..  node[left = 45, above =13]{$\beta$} node [above= 10, left=3] {$d$} controls (-2,-3.5) and (0,-3.85) .. (1,-4);
 \draw (-3.3,-3.65) .. node [below= 8, left =3] {$\tilde{d}$} controls (-2,-4.5) and (0,-4.2) .. (1,-4);
\end{tikzpicture}
\end{center}
Note that we are working so close to the north pole that the curvature is positive. Since we are estimating the derivative $h$ is small so $\sin\tilde{\alpha}_0 \leq \sin\tilde{\alpha}\leq\sin\tilde{\alpha}_K.$
\begin{align*}
\alpha_r = \lim_{h\rightarrow 0} \frac{\tilde{\alpha}}{h} = \lim_{h \rightarrow 0}\frac{\sin\tilde{\alpha}}{h} \in (\lim_{h \rightarrow 0}\frac{\sin\tilde{\alpha}_0}{h},~\lim_{h \rightarrow 0}\frac{\sin\tilde{\alpha}_K}{h}).
\end{align*}
We compute the endpoints of this interval using laws of cosines, starting with the upper bound. To get the second line in the computation below we have used the trigonometric identity $\cos h-\cos(d+\tilde{d})= 2\sin(\frac{h-(d+\tilde{d})}{2})\sin(\frac{h+(d+\tilde{d})}{2}),$ and the fact that $\tilde{d}$ approaches $d$ as $h$ goes to zero:
\begin{align*}
&\lim_{h \rightarrow 0}\frac{\sin\tilde{\alpha}_K}{h} = \lim_{h \rightarrow 0}\frac{\sqrt{(\cos \sqrt{K}h - \cos\sqrt{K}(d+\tilde{d}))(\cos\sqrt{K}(\tilde{d}-d)-\cos\sqrt{K} h)}}{h\sin \sqrt{K}d \sin\sqrt{K}\tilde{d}}\\
&=\frac{\sqrt{2}}{\sin \sqrt{K}d}\lim_{h \rightarrow 0}\frac{\sqrt{\cos\sqrt{K}(\tilde{d}-d)-\cos \sqrt{K}h}}{h}\\
 &= \frac{2}{\sin \sqrt{K}d}\lim_{h \rightarrow 0}\frac{\sqrt{\sin\sqrt{K}(\frac{h+d-\tilde{d}}{2})\sin\sqrt{K}(\frac{h+\tilde{d}-d}{2})}}{h}\\
&=\frac{2\sqrt{K}}{\sin \sqrt{K}d}\sqrt{\frac{1}{4}\lim_{h\rightarrow 0}(\frac{h+d-\tilde{d}}{h})(\frac{h+\tilde{d}-d}{h})}= \frac{\sqrt{K}\sqrt{1-d_r^2}}{\sin \sqrt{K}d}= \frac{\sqrt{K}\sin\beta}{\sin \sqrt{K}d}.
\end{align*}
The lower bound computation is similar:
\begin{align*}
&\lim_{h\rightarrow 0}\frac{\sin\tilde{\alpha}_{0}}{h}= \lim_{h\rightarrow 0}\frac{\sqrt{(h^2-(d-\tilde{d})^2)((d+\tilde{d})^2-h^2)}}{2d\tilde{d}h}\\
&=\frac{1}{d}\lim_{h\rightarrow 0}\frac{\sqrt{h^2-(\tilde{d}-d)^2}}{h}=\frac{1}{d}\sqrt{1-d_{r}^2}=\frac{\sin\beta}{d}.
\end{align*}
This concludes the proof of part $(i).$\\
Part $(ii)$ now follows easily. By the symmetry of the argument in $(i)$ we have $\frac{\sin\alpha}{d}\leq\beta_{r^\p}\leq\frac{\sqrt{K}\sin\alpha}{\sin \sqrt{K}d}.$ We can finish the proof as follows
\begin{align*}
&y_{r^\p\theta^\p}=y_{\theta^\p r^\p},\\
&\partial_{r^\p}(2df(r)\sin\beta)=\partial_{\theta^\p}(2d\cos\alpha),\\
&2f(r)\cos\alpha\sin\beta+2df(r)\cos\beta\beta_{r^\p}=2f(r)\cos\alpha\sin\beta-2d\sin\alpha\alpha_{\theta^\p},\\
&2df(r)\cos\beta\beta_{r^\p}=-2d\sin\alpha\alpha_{\theta^\p}=-2d\frac{f(r)}{f(r^\p)}\sin\beta\alpha_{\theta^\p},\\
&\alpha_{\theta^\p}=\frac{-f(r^\p)\cos\beta\beta_{r^\p}}{\sin\beta}=\frac{-f(r)\cos\beta\beta_{r^\p}}{\sin\alpha}.
\end{align*}
Part $(ii)$ now follows from part $(i).$\\
To get $(iii),$ we use remark $7$ to write
\begin{align*}
&f(r^\p)\sin\alpha=f(r)\sin\beta,\\
&f(r^\p)\cos\alpha\alpha_{\theta^\p}= f(r)\cos\beta\beta_{\theta_\p},
\end{align*}
\begin{equation}\label{alphatobetader}
\beta_{\theta^\p}=\frac{f(r^\p)\cos\alpha}{f(r)\cos\beta}\alpha_{\theta^\p}.
\end{equation}
$(iii)$ now follow from $(ii).$
\end{proof}
To avoid repetition we also record the following observation which was already tacitly used in the proof of lemma $5.$
\begin{observation}
Suppose $f$ and $g$ are smooth functions such that $g$ does not vanish in the interior of the interval $[a,b],$ and that we seek to bound $\frac{f}{g}$ in absolute value over this interval. We set the derivative equal to zero and find that at an interior extremum we must have $\frac{f}{g}=\frac{f^\p}{g^\p}.$ It therefore suffices to consider the values of $\frac{f}{g}$ at the endpoints and also bound $\frac{f^\p}{g^\p}$ in $[a,b].$ Moreover if both $f$ and $g$ vanish at one (or both) of the endpoints, then by l'Hopital's rule estimating that fraction at the endpoint again reduces to estimating the derivative fraction at that endpoint.
\end{observation}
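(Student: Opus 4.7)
The plan is to treat this as a straightforward consequence of compactness and the quotient rule, so the proposal is really an organization of the elementary calculus involved rather than anything deep. First I would assume that $g$ is nonzero on the entire closed interval $[a,b]$ (the vanishing-endpoint case will be reduced to this one via l'Hopital at the end). Then $f/g$ is a continuous, and in fact differentiable, real-valued function on the compact interval $[a,b]$, so it attains its maximum and minimum. These extrema occur either at one of the two endpoints $a,b$ or at an interior point $c\in(a,b)$ where $(f/g)'(c)=0$.

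Next I would compute $(f/g)'=(f'g-fg')/g^2$, so the interior critical-point equation $(f/g)'(c)=0$ is equivalent to $f'(c)g(c)=f(c)g'(c)$. Provided $g'(c)\neq 0$, this rearranges to $f(c)/g(c)=f'(c)/g'(c)$. (If $g'(c)=0$ then the equation forces $f'(c)=0$ as well, so $c$ is a common critical point of $f$ and $g$; in that case one may either perturb or, more cleanly, observe that the identity $f(c)/g(c)=f'(c)/g'(c)$ should be interpreted as an inequality $|f(c)/g(c)|\le\sup_{[a,b]}|f'/g'|$ read in the limiting sense, which is what we will actually use.) Combining the two possibilities, any value of $|f/g|$ on $[a,b]$ is bounded by
\begin{align*}
\max\{|f(a)/g(a)|,\ |f(b)/g(b)|,\ \sup_{[a,b]}|f'/g'|\},
\end{align*}
which is the content of the main claim.

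For the l'Hopital addendum, suppose $f(a)=g(a)=0$. Then $f(x)/g(x)$ is not defined at $x=a$, but by smoothness $f(x)=f'(a)(x-a)+O((x-a)^2)$ and $g(x)=g'(a)(x-a)+O((x-a)^2)$, so if $g'(a)\neq 0$ we have $\lim_{x\to a^+}f(x)/g(x)=f'(a)/g'(a)$, which is bounded by $\sup_{[a,b]}|f'/g'|$. If $g'(a)=0$ one iterates the argument with $f'$ and $g'$ in place of $f$ and $g$, using that both still vanish at $a$; the process must terminate since the functions are smooth and we are assuming $g$ does not vanish in the interior. Thus the endpoint evaluation at a degenerate endpoint reduces to evaluating (or bounding) the derivative quotient, exactly as stated.

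No step here is a genuine obstacle; the only point requiring a touch of care is the degenerate subcase where $g'$ also vanishes at an interior critical point, and this is handled either by a limiting/perturbation argument or by absorbing it into the $\sup|f'/g'|$ bound as indicated. The observation will then be applied in the body of the paper precisely in situations of the form encountered in Lemma 5, where $f$ and $g$ are explicit trigonometric or distance-type expressions whose derivatives are easier to estimate than the quotient itself.
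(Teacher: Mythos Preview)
Your proposal is correct and follows exactly the reasoning the paper intends: the paper does not supply a separate proof of this observation, since the argument is essentially contained in the statement itself (set $(f/g)'=0$, rearrange via the quotient rule, invoke l'Hopital at degenerate endpoints). Your write-up simply fills in the routine calculus details and handles the $g'(c)=0$ degeneracy with appropriate care, which is fine.
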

 We can now prove lemma $6$.
 \begin{proof} (of lemma 6)
 \begin{align*}
 &r\sqrt{rr^\p(\cos\theta^\p - \cos \mu)}\partial_{\eta}[\frac{1}{\sqrt{y(\mu) - y(\theta^\p)}}]\\
 &= \frac{r}{2}\sqrt{rr^\p(\cos\theta^\p - \cos \mu)}\frac{y_r(\theta^\p)-y_r(\mu)}{(y(\mu)-y(\theta^\p))^{3/2}}\\
 &\sim \frac{y_r(\theta)-y_r(\mu)}{r^\p(\cos\theta^\p-\cos\mu)}
 %&\sim \frac{y_{r\theta^\p}\big|_{\theta^\p=\tilde{\theta}}(\mu-\theta^\p)}{r^\p(\cos\theta^\p-\cos\mu)}= \frac{2f(r^\p)\partial_r[d\sin\alpha]\big|_{\theta^\p=\tilde{\theta}}(\mu - \theta^\p)}{r^\p(\cos\theta^\p-\cos\mu)}
 \end{align*}
 \begin{equation}\label{partialetabound}
 =\frac{2d(\theta^\p)\cos\beta(\theta^\p)-2d(\mu)\cos\beta(\mu)}{r^\p(\cos\theta^\p-\cos\mu)}.
 %\sim\frac{(\cos\beta\sin\alpha + d\cos\alpha\alpha_r)\big|_{\theta^\p=\tilde{\theta}}(\mu-\theta^\p)}{\cos\theta^\p-\cos\mu}
 \end{equation}
To complete the proof of lemma $6$ we need to show that (\ref{partialetabound}) is bounded independently of $r,r^\p,$ and $d.$ Note that $\theta^\p$ is between $0$ and $\mu$ so in view of the observation we differentiate the numerator and denominator in (\ref{partialetabound}) with respect to $\theta^\p$ to get the derived fraction (up to a constant factor)
\begin{equation}\label{partialboundder}
\frac{f(r^\p)\sin\alpha\cos\beta-d\sin\beta\beta_{\theta^\p}}{r^\p\sin\theta^\p}.
\end{equation}
To bound this expression we substitute the two extreme values for $\beta_{\theta^\p}$ from part $(iii)$ of lemma $6.$ Setting $\beta_{\theta^\p}=\frac{-f(r^\p)\cos\alpha}{d}$ in (\ref{partialboundder}) we get
\begin{align*}
\frac{f(r^\p)\sin(\alpha+\beta)}{r^\p\sin\theta^\p}\sim\frac{\sin(\alpha+\beta)}{\sin\theta^\p},
\end{align*}
$\theta^\p \in [0,\pi].$ If $\theta\in [\frac{\pi}{2}-\delta_0,\frac{\pi}{2}+\delta_0]$ for some small $\delta_0>0$ then this fraction is bonded by a constant. Otherwise, we can use the observation again to replace this fraction by
\begin{align*}
\frac{\partial_{\theta^\p}\sin(\alpha+\beta)}{\partial_{\theta^\p}\sin\theta^\p}=\frac{\cos(\alpha+\beta)(\alpha+\beta)_{\theta^\p}}{\cos\theta^\p},
\end{align*}
which is bounded on $[0,\frac{\pi}{2}-\delta_0]$ and $[\frac{\pi}{2}+\delta_0,\pi]$ if we can bound $(\alpha+\beta)_{\theta^\p}.$ This derivative can be computed using the Gauss-Bonnet formula
\begin{equation}\label{gaussbonnet}
\alpha+\beta+\theta^\p = \pi + \int_0^{\theta^\p}\int_0^{\rho(\phi)}k(r^{\p\p})f(r^{\p\p})dr^{\p\p} d\phi,
\end{equation}
where $\rho(\phi)$ is the distance from the north pole the point on the geodesic connecting $(r,0)$ to $(r^\p, \phi)$ whose $\theta-$cordinate is $\phi^.$ In particular $\rho(\theta^\p) = r^\p$ and $\rho(0)=r:$\\

\begin{center}
\begin{tikzpicture}
 \draw (-1,0) .. controls (-2,-1) and (-2.5,-2) .. node [below = 5pt, left = 1pt]{$r$} (-3,-3);
 \draw (-1,0) .. controls (-0.5,-1) and (0,-2) .. node [left = 10pt, below=2pt]{$\rho(\phi)$} (0.2,-3.87);
 \draw (-1,0) .. node[left = 35, above =18] {$\phi$} controls (0,-1) and (0.5,-2) .. node [right = 12pt, below = 2pt]{$r^\p$} (1,-4);
 \draw (-3,-3) .. node [below=3] {$d$} controls (-2,-3.5) and (0,-3.85) .. (1,-4);
 \draw (-1.3,-0.33) .. controls (-1.15,-0.53) and (-1,-0.53) .. (-0.85,-0.33);
\end{tikzpicture}
\end{center}
Taking the derivative of both sides of (\ref{gaussbonnet}) we get
\begin{equation}\label{alphaplusbetader}
(\alpha+\beta)_{\theta^\p}= \int_0^{r^\p}k(r^{\p\p})f(r^{\p\p})dr^{\p\p} -1,
\end{equation}
which is bounded.\\
We consider the other extreme case of (\ref{partialboundder}) next. Setting $\beta_{\theta^\p}=\frac{-\sqrt{K}f(r^\p)\cos\alpha}{\sin\sqrt{K}d}$ and taking the difference with the other extreme case for $\beta_{\theta^\p}$ we see that we need to bound
\begin{align*}
\frac{f(r^\p)\cos\alpha\sin\beta(\frac{\sqrt{K}d}{\sin\sqrt{K}d}-1)}{r^\p\sin\theta^\p}.
\end{align*}
By corollary $1$ this is bounded by
\begin{align*}
\frac{r^\p(\frac{\sqrt{K}d}{\sin\sqrt{K}d}-1)}{d},
\end{align*}
which is bounded.\\
To finish bounding (\ref{partialetabound}) it remains to consider the $\theta^\p=0$ endpoint:
\begin{align*}
\frac{2|r-r^\p|\cos\beta(0)-2d(\mu)\cos\beta(\mu)}{r^\p(1-\cos\mu)}.
\end{align*}
We can again use the observation, this time for the variable $\mu \in [0,\pi].$ At $\mu=\pi$ the fraction is bounded, and at $\mu=0$ both the numerator and the denominator vanish, so by the observation is suffices to consider the differentiated fraction
\begin{align*}
\frac{f(r^\p)\sin\alpha\cos\beta-d\sin\beta\beta_{\mu}}{r^\p\sin\mu}.
\end{align*}
But this expression was already bounded in (\ref{partialboundder}).
\end{proof}
We are now in the position to estimate $II_1.$ We make one more change of variables just to be able to use the results from \cite{ST1} and \cite{CT} directly: $\nu = \cos \mu.$ Define
\begin{align*}
P(\nu):= \int_0^\mu\frac{d\theta^\p}{\sqrt{\cos\theta^\p-\cos\mu}}=\int_\nu^1\frac{dz}{\sqrt{1-z^2}\sqrt{z-\nu}}.
\end{align*}
We want to write $II_1$ in a form that resembles its flat analogue in \cite{CT}, so we start with a brief explanation of the estimates in that papers. Note that in the flat case $\nu = \frac{r^2+{r^\p}^2-s^2}{2rr^\p}.$ \cite{CT} write $II_1$ as ($Q$ is independent of $\theta^\p$ there)
\begin{align*}
&\int_{K_1(r,t)}Q(r^\p,t^\p)\partial_\eta\Big[\int_0^\mu\frac{d\theta^\p}{\sqrt{y(\mu)-y(\theta^\p)}}\Big]r^\p dr^\p dt^\p\\
&= \int_{K_1(r,t)}Q(r^\p,t^\p)\partial_\eta[\frac{1}{\sqrt{2rr^\p}}P(\nu)]r^\p dr^\p dt^\p\\
&=c_1\int_{K_1(r,t)}\frac{Q(r^\p,t^\p)}{r^{3/2}{r^\p}^{1/2}}P(\nu)r^\p dr^\p dt^\p \\
&+ c_2\int_{K_1(r,t)}\frac{(r-s)^2-{r^\p}^2}{2\sqrt{2}r^{5/2}{r^\p}^{3/2}}Q(r^\p,t^\p)P^\p(\nu)r^\p dr^\p dt^\p.
\end{align*}
The nonlinearity $Q$ is then replaced by its upper bound $\frac{\A\B}{r}$ and the two terms are then bounded separately (i.e. there are no further cancelations; see also \cite{ST1} pp. $251-252$ and \cite{SS1} sections $8.2$ and $8.3$). We would therefore like to set it as our goal to write $II_1$ as
\begin{align*}
\int_{K_1(r,t)}Q(r^\p,t^\p)\bigo(\frac{1}{r^{3/2}{r^\p}^{1/2}})P(\nu)f(r^\p)dr^\p dt^\p
\end{align*}
\begin{equation}\label{ctcomparison}
+ \int_{K_1(r,t)}Q(r^\p,t^\p)\bigo(\frac{(r-s)^2-{r^\p}^2}{2\sqrt{2}r^{5/2}{r^\p}^{3/2}})P^\p(\nu)f(r^\p)dr^\p dt^\p.
\end{equation}
Unfortunately, this will not be possible. The reason is that unlike the flat case $\frac{\cos\theta^\p-\cos\mu}{y(\mu)-y(\theta^\p)}$ is not independent of $\theta^\p$ and therefore some extra derivatives will appear in our computations. Nevertheless, we will be able to bound all these derivatives in a way that allows us to prove that
\begin{equation}\label{IIbound}
|II_1|\lesssim f^{\delta-1}(r)\X \epsilon.
\end{equation}
We start by estimating $\partial_\eta\nu.$ Taking derivatives form both sides of the equation $y((r,0),(r^\p,\mu))=s^2$ we get
\begin{align*}
&y_r+ \mu_ry_{\theta^\p}=0,\\
&\mu_ty_{\theta^\p}=2s,
\end{align*}
and therefore
\begin{align*}
&\mu_\eta=\frac{2s-y_r}{y_{\theta^\p}}=\frac{1-\cos\beta}{f(r)\sin\beta},
&-\nu_\eta=(\sin\mu)\mu_\eta=\frac{\sin\mu(1-\cos\beta)}{f(r)\sin\beta}.
\end{align*}
To bound this last term, note that by corollary $1$, $\frac{\sin\mu}{\sin\beta(\mu)}\sim\frac{s}{r^\p}.$ Moreover, since $\cos x$ is decreasing on $(0,\pi),$
\begin{align*}
&1-\cos\beta \leq 1-\cos\beta_K = 1- \frac{\cos\sqrt{K}r^\p-\cos\sqrt{K}r\cos\sqrt{K}s}{\sin\sqrt{K}r\sin\sqrt{K}s}\\
&= \frac{\cos\sqrt{K}(r-s)-\cos\sqrt{K}r^\p}{\sin\sqrt{K}r\sin\sqrt{K}s}\sim\frac{(r-s)^2-{r^\p}^2}{rs}.
\end{align*}
It follows that
\begin{align*}
|\nu_\eta|\lesssim \frac{(r-s)^2-{r^\p}^2}{r^2r^\p}.
\end{align*}
Using lemma $4$ and this last computation, $II_1$ can be bounded by
\begin{align*}
&\int_{K_1(r,t)} \frac{\|w^+Q\|_{L^\infty(\theta^\p)}}{r^{3/2}{r^\p}^{1/2}}P(\nu)f(r^\p)dr^\p dt^\p\\
&+ \int_{K_1(r,t)}\frac{\|w^{+}_\eta Q\|_{L^\infty(\theta^\p)}}{r^{1/2}{r^\p}^{1/2}}P(\nu)f(r^\p)dr^\p dt^\p\\
&+\int_{K_1(r,t)} (\partial_\eta\nu)\partial_\nu\Big[\int_0^\mu\frac{w^+ Qd\theta^\p}{\sqrt{y(\mu)-y(\theta^\p)}}\Big]f(r^\p)dr^\p dt^\p\\
&=\int_{K_1(r,t)} \bigo(\frac{Q}{r^{3/2}{r^\p}^{1/2}})P(\nu)f(r^\p)dr^\p dt^\p\\
\end{align*}
\begin{equation}\label{muetadichotomy}
+ \int_{K_1(r,t)} \bigo(\frac{(r-s)^2-{r^\p}^2}{r^2r^\p})\partial_\nu\Big[\int_0^\mu\frac{w^+ Q d\theta^\p}{\sqrt{y(\mu)-y(\theta^\p)}}\Big]f(r^\p)dr^\p dt^\p.
\end{equation}
This is already somewhat similar to (\ref{ctcomparison}).\\
%\begin{align*}
%\partial_\nu[\int_0^\mu\frac{d\theta^\p}{\sqrt{y(\mu)-y(\theta^\p)}}] \lesssim \frac{1}{\sqrt{rr^\p}}P^\p(\nu)
%\end{align*}
With $z=\cos\theta^\p,$ $\nu=\cos\mu$ and $\phi=w^{+}Q$ we have
\begin{align*}
\int_0^\mu\frac{\phi d\theta^\p}{\sqrt{y(\mu)-y(\theta^\p)}}=\int_\nu^1\frac{\phi(z)dz}{\sqrt{1-z^2}\sqrt{y(\nu)-y(\theta^\p)}}.
\end{align*}
Letting $x^2(\theta^\p)=y(\theta^\p)-y(0),$ and $\lambda=x(\mu)$ this becomes
\begin{align*}
&\int_0^\lambda\frac{2x\phi dx}{y_{\theta^\p}(\theta^\p)\sqrt{\lambda^2-x^2}}=\int_0^\lambda\frac{d}{dx}(\sin^{-1}(\frac{x}{\lambda}))\frac{2x\phi}{y_{\theta^\p}(\theta^\p)}dx\\
&=\frac{\pi\lambda\phi(\nu)}{y_{\theta^\p(\mu)}}-2\int_0^\lambda\sin^{-1}(\frac{x}{\lambda})\frac{d}{dx}\big(\frac{x\phi}{y_{\theta^\p}(\theta^\p)}\big)dx.
\end{align*}
Differentiating we get
\begin{align*}
&\partial_\nu\int_\nu^1\frac{\phi d\theta^\p}{\sqrt{y(\mu)-y(\theta^\p)}}=\frac{d\lambda}{d\nu}\frac{d}{d\lambda}\Bigg[\frac{\pi\lambda\phi(\nu)}{y_{\theta^\p}(\mu)}-2\int_0^\lambda\sin^{-1}(\frac{x}{\lambda})\frac{d}{dx}\big(\frac{x\phi(x)}{y_{\theta^\p}(\theta^\p)}\big)dx\Bigg]\\
&=\frac{-2}{\lambda}\frac{d\lambda}{d\nu}\int_0^\lambda\frac{x}{\sqrt{\lambda^2-x^2}}\frac{d}{dx}\big(\frac{x\phi(z)}{y_{\theta^\p}(\theta^\p)}\big)dx\\
&=\frac{-2}{\lambda}\frac{d\lambda}{d\nu}\int_\nu^1\frac{\sqrt{y(\theta^\p)-y(0)}}{\sqrt{y(\mu)-y(\theta^\p)}\sqrt{1-z^2}}\frac{d}{d\theta^\p}\big(\frac{\phi(\theta^\p)\sqrt{y(\theta^\p)-y(0)}}{y_{\theta^\p}(\theta^\p)}\big)dz.
\end{align*}
Since $\frac{d\lambda}{d\nu}=\frac{-y_{\theta^\p}(\mu)}{2\lambda\sqrt{1-\nu^2}}$ and in view of lemma $5$ this last expression is of the same order as
\begin{align*}
\frac{y_{\theta^\p}(\mu)}{rr^\p(1-\nu)\sqrt{1-\nu^2}}\int_\nu^1\frac{\frac{d}{d\theta^\p}\big(\frac{\phi(\theta^\p)\sqrt{y(\theta^\p)-y(0)}}{y_{\theta^\p(\theta^\p)}}\big)}{\sqrt{(z-\nu)(1+z)}}dz.
\end{align*}
$y_{\theta^\p}=2df(r^\p)\sin\alpha\sim rr^\p\sin\mu=rr^\p\sqrt{1-\nu^2}$ and therefore the expression above is equivalent to
\begin{align*}
\frac{1}{1-\nu}\int_\nu^1\frac{\frac{d}{d\theta^\p}\big(\frac{\phi(\theta^\p)\sqrt{y(\theta^\p)-y(0)}}{y_{\theta^\p(\theta^\p)}}\big)}{\sqrt{(z-\nu)(1+z)}}dz=: A+B,
\end{align*}
according to whether the $\theta^\p-$derivative falls on $\phi$ or the other term. We start by bounding $B$ which is more difficult.
\begin{lemma}
$\frac{d}{d\theta^\p}\big(\frac{\sqrt{y(\theta^\p)-y(0)}}{y_{\theta^\p}(\theta^\p)}\big)\lesssim \frac{1}{\sqrt{rr^\p}(1+\cos\theta^\p)}.$
\end{lemma}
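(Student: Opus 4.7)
The plan is to compute $F(\theta^\p) := \sqrt{y(\theta^\p)-y(0)}/y_{\theta^\p}(\theta^\p)$ explicitly, reduce the derivative to its flat-background counterpart using the preceding geometric lemmas, and then close the estimate by an algebraic cancellation. Setting $D := y-y(0) = d^2 - (r-r^\p)^2$ and applying the quotient rule gives $F^\p = 1/(2\sqrt D) - \sqrt D\, y_{\theta^\p\theta^\p}/y_{\theta^\p}^2$. From the identity $y_{\theta^\p} = 2df(r^\p)\sin\alpha$ already recorded in the proof of Lemma 3 one obtains $d_{\theta^\p} = y_{\theta^\p}/(2d) = f(r^\p)\sin\alpha$, and differentiating once more yields $y_{\theta^\p\theta^\p} = 2f^2(r^\p)\sin^2\alpha + 2df(r^\p)\cos\alpha\cdot\alpha_{\theta^\p}$. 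Substituting and using $d^2 - D = (r-r^\p)^2$, we arrive at
\[F^\p(\theta^\p) = \frac{(r-r^\p)^2}{2d^2\sqrt D} - \frac{\sqrt D\,\cos\alpha\cdot\alpha_{\theta^\p}}{2df(r^\p)\sin^2\alpha}.\]

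I would then replace each geometric quantity by its flat-model value using the triangle comparison theorems: Corollary 1 gives $\sin\alpha\sim r\sin\theta^\p/d$, Lemma 5 together with $\alpha_0\leq\alpha\leq\alpha_K$ allows replacement of $\cos\alpha$ and $\cos\beta$ by their Euclidean law-of-cosines expressions $(d^2+r^{\p 2}-r^2)/(2dr^\p)$ and $(d^2+r^2-r^{\p 2})/(2dr)$, Lemma 8(ii) gives $\alpha_{\theta^\p} \sim -f(r)\cos\beta/d$, and the expansion of the spherical law of cosines (with $f\sim r$ near the pole) gives $D\sim 2rr^\p(1-\cos\theta^\p)$ as well as $d^2 \sim r^2+r^{\p 2}-2rr^\p\cos\theta^\p$. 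After substitution the second term of $F^\p$ reduces to $\sqrt D\,\cos\alpha\cos\beta/(2rr^\p\sin^2\theta^\p)$, and combining it with the first term via $\sin^2\theta^\p = (1-\cos\theta^\p)(1+\cos\theta^\p)$ and the key algebraic identity $(r-r^\p)^2 + D = d^2$ (which cancels the offending $1/d^2$ factor) collapses the full expression into
\[F^\p(\theta^\p) \sim \frac{\sqrt{1-\cos\theta^\p}}{2\sqrt{2rr^\p}\,(1+\cos\theta^\p)},\]
whence the desired bound follows from $\sqrt{1-\cos\theta^\p}\leq\sqrt 2$.

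The main obstacle is that the first summand of $F^\p$ is, on its own, of order $1/(\sqrt{rr^\p}\,\theta^\p)$ as $\theta^\p\to 0$, which is strictly larger than the target bound. Hence the estimate cannot be obtained by bounding the two pieces of $F^\p$ separately; the proof rests essentially on the cancellation $(r-r^\p)^2 + D = d^2$ between them, forcing the flat-case comparison to be carried out at the level of the full expression. A secondary subtlety is that Lemma 8(ii) only pinches $\alpha_{\theta^\p}$ between $-\sqrt{K}f(r)\cos\beta/\sin\sqrt{K}d$ and $-f(r)\cos\beta/d$; substituting the two extremes in turn (as was already done in the proof of Lemma 6) the multiplicative discrepancy $\sqrt{K}d/\sin\sqrt{K}d = 1 + O(Kd^2)$ produces only an error absorbed into the implicit constant, since $Kd^2$ can be taken as small as we wish in our neighborhood.
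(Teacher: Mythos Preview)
Your computation of $F'(\theta')$ as the sum of two explicit terms is correct, and the flat-case collapse to $\sqrt{1-\cos\theta'}/(2\sqrt{2rr'}(1+\cos\theta'))$ is a nice check. However, there is a genuine gap when you pass to the curved background.

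You correctly identify that each of the two summands is individually of order $1/(\sqrt{rr'}\,\theta')$ as $\theta'\to 0$ while the target is only $O(1/\sqrt{rr'})$, so everything hinges on their cancellation. The problem is that the comparison lemmas you invoke (Corollary~1 for $\sin\alpha$, Lemma~5 for $D$, Lemma~8(ii) for $\alpha_{\theta'}$) furnish only bounded-ratio control, at best of the form $1+O(K\max(r,r',d)^2)$, where the correction is a \emph{fixed} small constant determined by the neighborhood. When you multiply such a correction into a term of size $1/(\sqrt{rr'}\,\theta')$ and then subtract, the residual is of order $K(r-r')^2/(\sqrt{rr'}\,\theta')$, which blows up as $\theta'\to 0$ with $r,r'$ held fixed. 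Your remark that the discrepancy $\sqrt{K}d/\sin\sqrt{K}d=1+O(Kd^2)$ is ``absorbed into the implicit constant'' is exactly where this breaks: it would be absorbed if it multiplied the \emph{target}, but it multiplies the much larger second summand. The same objection applies to replacing $\cos\alpha,\cos\beta$ by their flat law-of-cosines values---$\alpha_0\le\alpha\le\alpha_K$ does not give $\cos\alpha\sim\cos\alpha_0$ (consider $\alpha$ near $\pi/2$), and even the exact trigonometric identity $\cos\alpha\cos\beta=\cos(\alpha+\beta)+\sin\alpha\sin\beta$ combined with Gauss--Bonnet leaves you needing $d^2\sin\alpha\sin\beta$ exactly, not merely up to a constant.

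The paper sidesteps this difficulty entirely. Writing $F'=(y_{\theta'}^2-2Dy_{\theta'\theta'})/(2y_{\theta'}^2\sqrt{D})$ and using $y_{\theta'}^2\sim(rr')^2(1-\nu^2)$, $\sqrt{D}\sim\sqrt{rr'(1-\nu)}$, the claim reduces to
\[
\frac{y_{\theta'}^2-2Dy_{\theta'\theta'}}{(rr')^2(1-\cos\theta')^{3/2}}\lesssim 1.
\]
Now Observation~1 is applied: at $\theta'=0$ both numerator and denominator vanish, and the $\theta'$-derivative of the numerator is $2y_{\theta'}y_{\theta'\theta'}-2y_{\theta'}y_{\theta'\theta'}-2Dy_{\theta'\theta'\theta'}=-2Dy_{\theta'\theta'\theta'}$---the cancellation happens \emph{exactly}, at the level of the derivative, without any approximation. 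After cancelling $D\sim rr'(1-\cos\theta')$ against the denominator's derivative, one is left with bounding $y_{\theta'\theta'\theta'}/(rr'\sqrt{1+\cos\theta'})$. This is then handled by a lengthy but robust computation: write $y'''=6d'd''+2dd'''$, compute $\alpha''$ via differentiating Gauss--Bonnet twice, substitute the extreme values $\alpha'=-f(r)\cos\beta/d$, $\beta'=-f(r')\cos\alpha/d$, and show the resulting expression factors as $\sin(\alpha+\beta)(\alpha'+\beta')^2$; the substitution errors are then controlled term-by-term since at this stage no large cancellation is required.
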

Assuming the lemma is correct, and in view of the integral equality $\int\frac{dz}{(1+z)^{3/2}\sqrt{z-\nu}}=\frac{2\sqrt{z-\nu}}{(1+\nu)\sqrt{z+1}}$, we get
\begin{align*}
|B|\lesssim \frac{\|Q\|_{L^\infty(\theta^\p)}}{1-\nu}\int_\nu^1\frac{1}{\sqrt{rr^\p(z-\nu)}(1+z)^{3/2}}dz
\end{align*}
\begin{equation}\label{Bbound}
=\bigo(\frac{\A\B}{r^\p\sqrt{rr^\p}(1-\nu)^{1/2}(1+\nu)}).
\end{equation}
We start the proof of the lemma:
\begin{proof}
\begin{align*}
\frac{d}{d\theta^\p}\big(\frac{\sqrt{y(\theta^\p)-y(0)}}{y_{\theta^\p}(\theta^\p)}\big)=\frac{y^2_{\theta^\p}-2y_{\theta^\p\theta^\p}(y(\theta^\p)-y(0))}{2y^2_{\theta^\p}(\theta^\p)\sqrt{y(\theta^\p)-y(0)}}\sim\frac{y^2_{\theta^\p}-2y_{\theta^\p\theta^\p}(y(\theta^\p)-y(0))}{(rr^\p)^{5/2}(1-\nu^2)\sqrt{1-\nu}},
\end{align*}
and therefore to prove the lemma it suffices to show that
\begin{align*}
\frac{y^2_{\theta^\p}-2y_{\theta^\p\theta^\p}(y(\theta^\p)-y(0))}{(rr^\p)^2(1-\cos\theta^\p)^{3/2}}\lesssim 1.
\end{align*}
According to observation 1 it suffices to bound the differentiated fraction
\begin{align*}
\frac{y_{\theta^\p\theta^\p\theta^\p}(\theta^\p)(y(\theta^\p)-y(0))}{(rr^\p)^2(1+\cos\theta^\p)^{1/2}(1-\cos\theta^\p)}\sim\frac{y_{\theta^\p\theta^\p\theta^\p}(\theta^\p)}{rr^\p\sqrt{1+\cos\theta^\p}}
\end{align*}
for $\theta^\p \in [0,\pi].$ Denoting $\partial_{\theta^\p}$ by $\prime$ and noting that $y=d^2$ we have $y^{\p\p\p}=6d^\p d^{\p\p} +2dd^{\p\p\p}.$ We first compute
\begin{equation}\label{dprimeddoubleprime}
d^\p d^{\p\p}=f^2(r^\p)\cos\alpha\sin\alpha\alpha^{\p}(\theta^\p).
\end{equation}
For the second term we have
\begin{equation}\label{ddtripleprime}
dd^{\p\p\p}=-f(r^\p)d\sin\alpha\alpha_{\theta^\p}^2+df(r^\p)\cos\alpha\alpha^{\p\p}.
\end{equation}
To compute the second derivative of $\alpha$ we differentiate the Gauss-Bonnet equation (\ref{gaussbonnet}) twice to see that $\alpha^{\p\p}+\beta^{\p\p}=0.$ We can also differentiate (\ref{alphatobetader}) to write $\beta^{\p\p}$ in terms of $\alpha^{\p\p}:$
\begin{align*}
-\alpha^{\p\p}=\beta^{\p\p}=\frac{f(r^\p)\cos\alpha}{f(r)\cos\beta}\alpha^{\p\p}+\Big(\frac{f(r^\p)\cos\alpha}{f(r)\cos\beta}\Big)^\p\alpha^\p.
\end{align*}
So
\begin{align*}
&\Big(1+\frac{f(r^\p)\cos\alpha}{f(r)\cos\beta}\Big)\alpha^{\p\p}=\frac{f(r)f(r^\p)\sin\alpha\cos\beta\alpha^\p-f(r)f(r^\p)\cos\alpha\sin\beta\beta^\p}{f^2(r)\cos^2\beta}\alpha^\p\\
&=\frac{f^2(r)\sin\beta\cos^2\beta\alpha^\p-f^2(r^\p)\cos^2\alpha\sin\beta\alpha^\p}{f^2(r)\cos^3\beta}\alpha^\p,
\end{align*}
so
\begin{align*}
&\alpha^{\p\p}=\frac{\sin\beta(f^2(r)\cos^2\beta-f^2(r^\p)\cos^2\alpha)}{f(r)\cos^2\beta(f(r)\cos\beta+f(r^\p)\cos\alpha)}(\alpha^{\p})^2\\
&=\frac{\sin\beta(f^2(r)\cos^2\beta-f^2(r^\p)\cos^2\alpha)}{f^2(r)\cos^3\beta(\alpha^\p+\beta^\p)}(\alpha^\p)^3.
\end{align*}
Plugging this into (\ref{ddtripleprime}) we get
\begin{align*}
&(r^2\cos^3\beta(\alpha^\p+\beta^\p))(3d^\p d^{\p\p}+dd^{\p\p\p})=\Bigg[3f^2(r)f^2(r^\p)\cos\alpha\sin\alpha\cos^3\beta(\alpha^\p)^2\\
&+3f^2(r)f^2(r^\p)\cos\alpha\sin\alpha\cos^3\beta\alpha^\p\beta^\p-f(r^\p)f^2(r)d\sin\alpha\cos^3\beta(\alpha^\p)^3\\
&-f(r^\p)f^2(r)d\sin\alpha\cos^3\beta(\alpha^\p)^2\beta^\p +f^2(r)f(r^\p)d\cos\alpha\sin\beta\cos^2\beta(\alpha^\p)^3
\end{align*}
\begin{equation}\label{thelongexpression}
-f^3(r^\p)d\sin\beta\cos^3\alpha(\alpha^\p)^3\Bigg].
\end{equation}
Recall that we want to bound $(3d^\p d^{\p\p}+dd^{\p\p\p})/rr^\p\sqrt{1+\cos\theta}.$ Our strategy will be to replace $\alpha^\p$ and $\beta^\p$ by and $-f(r)\cos\beta/d$ and $-f(r^\p)\cos\alpha/d$ respectively in the expression above, and use lemma 6 to bound the corresponding error in each term. First we make these substitutions:
\begin{align*}
&\frac{(\ref{thelongexpression})}{r^\p r^3\cos^3\beta}=\frac{1}{d^2}\times \Bigg[3f(r)f(r^\p)\cos^2\beta\cos\alpha\sin\alpha\\ &+3f^2(r^\p)\cos^2\alpha\cos\beta\sin\alpha +f^2(r)\sin\alpha\cos^3\beta\\
&+f(r)f(r^\p)\sin\alpha\cos\alpha\cos^2\beta-f^2(r)\sin\beta\cos\alpha\cos^2\beta+f^2(r^\p)\sin\beta\cos^3\alpha\Bigg]\\
&=\frac{1}{d^2}\times\Big[2f(r)f(r^\p)\cos\beta\cos\alpha\sin(\alpha+\beta)\\
&+f^2(r)\cos^2\beta\sin(\alpha+\beta)+f^2(r^\p)\cos^2\alpha\sin(\alpha+\beta)\Big]\\
&=\frac{\sin(\alpha+\beta)(f(r^\p)\cos\alpha+f(r)\cos\beta)^2}{d^2}=\frac{\sin(\alpha+\beta)(f(r)\cos\beta(\frac{\alpha^\p+\beta^\p}{\alpha^\p}))^2}{d^2}\\
&\sim\sin(\alpha+\beta)(\alpha^\p+\beta^\p)^2.
\end{align*}
Since $\alpha^\p+\beta^\p$ is bounded, with $\alpha^\p=\frac{-f(r)\cos\beta}{d}$ and $\beta^\p=\frac{-f(r^\p)\cos\alpha}{d},$
\begin{align*}
\frac{y^{\p\p\p}}{rr^\p\sqrt{1+\cos\theta^\p}}\lesssim\frac{\sin(\alpha+\beta)}{\sqrt{1+\cos\theta^\p}}\lesssim\frac{\sin(\alpha+\beta)}{\sin\theta^\p}.
\end{align*}
It suffices to bound this fraction on $[0,\pi/3]\cup [2\pi/3,\pi],$ and since $\alpha+\beta\rightarrow 0,\pi$ as $\theta^\p\rightarrow \pi,0$ in view of the Gauss-Bonnet theorem, it suffices to consider the differentiated fraction
\begin{align*}
\frac{\cos(\alpha+\beta)(\alpha^\p+\beta^\p)}{\cos\theta^\p},
\end{align*}
which is bounded.\\\\
It remains to do the error analysis resulting from setting $\alpha^\p=\frac{-f(r)\cos\beta}{d}=:a$ and $\beta^\p=\frac{-f(r^\p)\cos\alpha}{d}=:b$. To this end, we divide the left hand side of (\ref{thelongexpression}) by $f^3(r)f(r^\p)\cos^3\beta\sqrt{1+\cos\theta^\p},$ and use lemma 6 to bound each of the resulting 6 terms. For simplicity of notation we set $K=1$ and write $r$ and $r^\p$ instead of $f(r)$ and $f(r^\p)$ respectively.\\\\
First term:\\\\
Note that
\begin{align*}
\alpha_{\theta^\p}^2-a^2=(\alpha^\p-a)(\alpha^\p+a)\sim r^2\cos^2\beta\big(\frac{d-\sin d}{d^3}\big)\lesssim r^2\cos^2\beta,
\end{align*}
where the last bound is obtained by applying l`Hopital's rule to the term in the parenthesis. The first term is therefore bounded by
\begin{align*}
\frac{r^4{r^\p}^2\cos\alpha\sin\alpha\cos^5\alpha}{r^3r^\p\cos^3\beta\sqrt{1+\cos\theta^\p}}\big(\frac{d-\sin d}{d^3}\big)\lesssim\frac{rr^\p\sin\alpha}{\sqrt{1+\cos\theta^\p}}.
\end{align*}
To bound this last expression, we only need to consider the region $\theta^\p\geq\frac{\pi}{2},$ where we bound the fraction by
\begin{align*}
\frac{rr^\p\sin\alpha}{\sin\theta^\p}\lesssim\frac{r^2r^\p}{d(\theta^\p)}\leq\frac{r^2r^\p}{d(\frac{\pi}{2})}\leq\frac{r^2r^\p}{d_K(\frac{\pi}{2})}\sim\frac{r^2r^\p}{\sqrt{r^2+{r^\p}^2}}\lesssim1,
\end{align*}
where $d_K$ is the corresponding edge in the comparison triangle with sides $r$ and $r^\p$ meeting at angle $\theta^\p.$\\\\
Second term:\\
\begin{align*}
&\alpha\p\beta^\p-ab=\alpha^\p(\beta^\p-b)+b(\alpha^\p-a)\sim\frac{rr^\p\cos\beta\cos\alpha}{d}\big(\frac{1}{d}-\frac{1}{\sin d}\big)\\
&\lesssim rr^\p\cos\alpha\cos\beta.
\end{align*}
It follows that the second term can be bounded by
\begin{align*}
\frac{{r^\p}^2\sin\alpha}{\sqrt{1+\cos\theta^\p}},
\end{align*}
which is bounded as before.\\\\
Third term:\\
\begin{align*}
&d\big({\alpha^\p}^3-a^3\big)=d\big(\alpha^\p({\alpha^\p}^2-a^2)+{\alpha^\p}^2(\alpha^\p-a)\big)\sim\frac{r^3\cos^3\beta}{d}\big(\frac{1}{d}-\frac{1}{\sin d}\big)\\
&\lesssim r^3\cos^3\beta,
\end{align*}
so the third term is bounded by
\begin{align*}
\frac{r^2\sin\alpha}{\sqrt{1+\cos\theta^\p}}\lesssim 1.
\end{align*}
Fourth and fifth terms:\\\\
Here again we have a factor of $d{\alpha^\p}^3,$ so these are bounded exactly in the same way as the third term.\\\\
Sixth term:\\
\begin{align*}
&d\big({\alpha^\p}^2\beta^\p-a^2b\big)=d\big({\alpha^\p}^2(\beta^\p-b)+b({\alpha^\p}^2-a^2)\big)\sim\frac{r^\p r^2\cos\alpha\cos^2\beta}{d}\big(\frac{1}{d}-\frac{1}{\sin d}\big)\\
&\lesssim r^\p r^2\cos\alpha\cos^2\beta,
\end{align*}
and therefore this term is bounded by
\begin{align*}
\frac{rr^\p\sin\alpha}{\sqrt{1+\cos\theta^\p}}\lesssim 1.
\end{align*}
\end{proof}
Bounding $A$ is easier. Since $y_{\theta^\p}\sim rr^\p\sin\theta^\p,~\frac{\sqrt{y(\theta^\p)-y(0)}}{y_{\theta^\p}}\lesssim \frac{1}{\sqrt{rr^\p(1+\nu)}}.$ Moreover since
\begin{align*}
Q(U)=B(U)(U_\xi,U_\eta)+\frac{1}{f^2}(lAU,lAU),
\end{align*}
$Q_{\theta^\p}$ is bounded just as $Q$ by $\frac{\A\B}{f(r^\p)}.$ It follows that $A$ is also bounded by the right hand side of (\ref{Bbound}).\\

We continue to bound $II$ over $K_1.$ Keep in mind that in this part of the cone

\begin{align*}
&|\nu-1|=|\cos\mu-\cos0|\geq c\big|\frac{y(\mu)-y(0)}{rr^\p}\big|= c\frac{(\xi-\xi^\p)(\eta -\eta^\p)}{rr^\p},\\
&|\nu+1|=|\cos\mu-\cos\pi|\geq c\big|\frac{y(\mu)-y(\pi)}{rr^\p}\big|=c\frac{(\eta^\p-\xi)(\eta-\xi^\p)}{rr^\p}.
\end{align*}

We now go back to (\ref{muetadichotomy}). The first term can be bounded just as in \cite{ST1}. Note that according to lemma $3.2$ in \cite{CT}
\begin{align*}
&P(\nu)=\bigo\Big(\log(1+\frac{1}{\sqrt{|1+\nu|}})\Big)=\bigo\Big(\log(1+c\sqrt{\frac{rr^\p}{(\eta^\p-\xi)(\eta-\xi^\p)}})\Big)\\
&=\bigo\Big(\log(1+c\sqrt{\frac{r}{(\eta^\p-\xi)}})\Big).
\end{align*}
Therefore keeping in mind the bound (\ref{nonlinearitybound}) on the nonlinearity,
\begin{align*}
&\Bigg|\int\int_{K_1}\frac{PQ}{r^{3/2}{r^\p}^{1/2}}f(r^\p)dr^\p dt^\p\Bigg|\lesssim r^{-3/2}\int\int_{K_1}\log(1+\frac{1}{\sqrt{\nu+1}})\frac{\A\B}{\sqrt{r^\p}}dr^\p dt^\p\\
&\lesssim \X r^{-3/2}\int_\xi^\eta\log(1+c\sqrt{\frac{r}{(\eta^\p-\xi)}})\Big(\int_{-2-\eta}^\xi (\eta^\p-\xi^\p)^{\delta-1}\B d\xi^\p\Big)d\eta^\p\\
&\lesssim \frac{\X}{r^{3/2}}\int_\xi^\eta\log(1+c\sqrt{\frac{r}{(\eta^\p-\xi)}})\times\\
&~~~~~~~~~~~~~~~~~~~~~\Big(\int_{-2-\eta}^\xi (\eta^\p-\xi^\p)^{2\delta-2} d\xi^\p\Big)^{1/2}\Big(\int_{-2-\eta}^\xi\B^2d\xi^\p\Big)^{1/2}d\eta^\p\\
&\lesssim \epsilon\X r^{-3/2}\int_\xi^\eta(\eta^\p-\xi)^{\delta -1/2}\log(1+c\sqrt{\frac{r}{(\eta^\p-\xi)}})d\eta^\p\\
&\lesssim \epsilon\X r^{-3/2}\Bigg[(\eta^\p-\xi)^{\delta+1/2}\log(1+c\sqrt{\frac{r}{\eta^\p-\xi}})\Big|_\xi^\eta+\tilde{c}\int_\xi^\eta\frac{\sqrt{r}(\eta^\p-\xi)^{\delta-1}}{1+c\sqrt{\frac{r}{\eta^\p-\xi}}}d\eta^\p\Bigg]\\
&\lesssim\epsilon\X r^{-3/2}[r^{\delta+1/2}+\sqrt{r}(\eta^\p-\xi)^\delta\big|_\xi^\eta]\lesssim\epsilon\X r^{\delta-1}.
\end{align*}
To bound the second term in (\ref{muetadichotomy}) we need to consider (\ref{Bbound}).
\begin{align*}
&\frac{(r-s)^2-{r^\p}^2}{r^2r^\p}\frac{1}{\sqrt{rr^\p}(1+\nu)(1-\nu)^{1/2}}\\
&\lesssim\frac{(\xi-\xi^\p)(\eta^\p-\xi)(rr^\p)^{3/2}}{r^2r^\p\sqrt{rr^\p}\sqrt{(\eta-\eta^\p)(\xi-\xi^\p)}(\eta^\p-\xi)(\eta-\xi^\p)}\\
&=\frac{(\xi-\xi^\p)^{1/2}(rr^\p)^{3/2}}{r^2r^\p\sqrt{rr^\p}(\eta-\xi^\p)\sqrt{\eta-\eta^\p}}\leq\frac{(rr^\p)^{3/2}}{r^2r^\p\sqrt{rr^\p}\sqrt{(\eta-\xi^\p)(\eta-\eta^\p)}}\\
&\leq\frac{1}{r\sqrt{(\eta^\p-\xi^\p)(\eta-\eta^\p)}}\lesssim\frac{1}{r{r^\p}^{1/2}\sqrt{\eta-\eta^\p}},
\end{align*}
and therefore the second term in (\ref{muetadichotomy}) can be bounded by
\begin{align*}
&r^{-1}\int\int_{K_1}\frac{\A\B}{\sqrt{r^\p(\eta-\eta^\p)}}d\xi^\p d\eta^\p\lesssim \X r^{-1}\int_{\xi}^{\eta}\frac{1}{\sqrt{\eta-\eta^\p}}\Big{(}\int_{-2-\eta}^{\xi}(\eta^\p-\xi^\p)^{\delta-1}\B d\xi^\p \Big{)}d\eta^\p\\
&\lesssim \epsilon r^{-1}\X\int_\xi^\eta\frac{(\eta^\p-\xi)^\delta}{\sqrt{(\eta^\p-\eta)(\eta^\p-\xi)}}d\eta^\p\lesssim\epsilon r^{\delta-1}\X\int_\xi^\eta\frac{1}{\sqrt{(\eta^\p-\eta)(\eta^\p-\xi)}}d\eta^\p.\\
&\lesssim \epsilon r^{\delta-1}\X
\end{align*}
The final step for obtaining Holder estimates is bounding $II$ in $K_2.$ Since the region of integration for $\theta^\p$ is fixed in this case (i.e. $0$ to $\pi$ independently of $s$ and $d$) we can use a direct comparison with the flat case and use the results from \cite{CT}. Keeping in mind the bound (\ref{nonlinearitybound}) on the nonlinearity, the term that needs to be bounded is
\begin{align*}
&\int_{K_2}\partial_\eta\Big[\int_0^\pi\frac{Qw^{+}f(r^\p)}{\sqrt{s^2-d^2}}d\theta^\p\Big] d\xi^\p d\eta^\p\\
&=\int_{K_2}\int_0^\pi\frac{(\partial_\eta w^{+})Qf(r^\p)}{\sqrt{s^2-d^2}}d\theta^\p d\xi^\p d\eta^\p + \int_{K_2}\int_0^\pi\partial_{\eta}\Big[\frac{1}{\sqrt{s^2-d^2}}\Big]Qw^{+}f(r^\p)d\theta^\p d\xi^\p d\eta^\p\\
&=\bigo\Bigg( \int_{K_2}\A\B\int_0^\pi \Big(\frac{1}{\sqrt{s^2-d^2}}+\Bigg|\partial_\eta \Big[\frac{1}{\sqrt{s^2-d^2}}\Big]\Bigg|\Big)d\theta^\p d\xi^\p d\eta^\p\Bigg).~~~~~~~~~~~~~~~~~~(*)
\end{align*}
Since $d_0\geq d$ we expect to be able to bound $\partial_\eta \Big[\frac{1}{\sqrt{s^2-d^2}}\Big]$ by $\partial_\eta \Big[\frac{1}{\sqrt{s^2-d_0^2}}\Big]$ (here the comparison triangle is the triangle with the angle at the pole equal to $\theta^\p$ and the sides equal to $r$ and $r^\p$). This is the next step.
\begin{lemma}
$\partial_\eta \Big[\frac{1}{\sqrt{s^2-d^2}}\Big]= \bigo\Bigg(\partial_\eta \Big[\frac{1}{\sqrt{s^2-d_0^2}}\Big]\Bigg).$
\end{lemma}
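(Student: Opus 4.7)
The plan is to compute both derivatives explicitly and then dominate the curved-metric expression by its flat counterpart using the geometric comparison results already established. By Lemma 2 and the symmetric statement $y_r = 2d\cos\beta$, together with $\partial_\eta = (\partial_t + \partial_r)/2$ and the $t$-independence of $d$, one obtains $\partial_\eta(s^2 - d^2) = s - d\cos\beta$, so that
\[\Big|\partial_\eta\Big[\frac{1}{\sqrt{s^2-d^2}}\Big]\Big| = \frac{s - d\cos\beta}{2(s^2-d^2)^{3/2}},\]
and an analogous identity with $d_0,\beta_0$ holds for the Euclidean comparison triangle whose two sides $r,r'$ meet at angle $\theta'$.

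Next, I would split $s - d\cos\beta = (s-d) + d(1-\cos\beta)$, which decomposes the left-hand side into the two positive summands
\[T_1 = \frac{1}{2(s-d)^{1/2}(s+d)^{3/2}}, \qquad T_2 = \frac{d(1-\cos\beta)}{2(s-d)^{3/2}(s+d)^{3/2}},\]
with corresponding $T_1^0, T_2^0$ in the flat case. In $K_2$ one has $s \geq r+r' \geq d_0 \geq d \geq 0$, so $s+d$ and $s+d_0$ both lie in $[s,2s]$ (hence are comparable), and $s-d \geq s-d_0 \geq 0$. This immediately yields $T_1 \lesssim T_1^0$. For $T_2$, the inequality $(s-d)^{-3/2} \leq (s-d_0)^{-3/2}$ reduces the task to showing $d(1-\cos\beta) \lesssim d_0(1-\cos\beta_0)$.

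The last inequality is the crux. Applying the proof of Lemma 3 to $\beta$ (symmetrically to $\alpha$) gives $1-\cos\beta \leq 1-\cos\beta_K$, and the spherical law of cosines, expanded exactly as in the $\sin\alpha_K$ computation in that proof, yields
\[1-\cos\beta_K = \frac{2\sin(\sqrt K(r'+r-d)/2)\sin(\sqrt K(r'-r+d)/2)}{\sin\sqrt K r\,\sin\sqrt K d} \sim \frac{(r'+r-d)(r'-r+d)}{2rd},\]
to be compared with $1-\cos\beta_0 = (r'+r-d_0)(r'-r+d_0)/(2rd_0)$. Multiplying by $d$ and $d_0$ respectively, the estimate becomes $(r'+r-d)(r'-r+d) \lesssim (r'+r-d_0)(r'-r+d_0)$. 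Since the curvature correction $d_0-d$ vanishes to third order in the small distances (standard expansion of geodesic distance on a curved surface), the two products differ by a higher-order amount and the ratio remains universally bounded.

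The principal obstacle is controlling the last ratio near the degenerate configurations $\theta' \to 0$ (with $r$ close to $r'$) and $\theta' \to \pi$, where both products vanish. I would handle each by Taylor expansion to leading nonzero order: for instance near $\theta' = \pi$, both $r+r'-d$ and $r+r'-d_0$ have the same leading behavior $rr'(\pi-\theta')^2/[2(r+r')]$, since the spherical and flat laws of cosines coincide at leading order in the small distances, while the cubic curvature discrepancy is negligible against this quadratic main term (and at $\theta'=\pi$ itself both distances equal $r+r'$, so the ratio is $1$ by symmetry). An analogous Taylor expansion at $\theta' = 0$ with $r = r'$ shows that $r'-r+d$ and $r'-r+d_0$ also vanish at the same rate, yielding the required universal comparison constant and completing the proof.
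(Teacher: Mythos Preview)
Your decomposition $s - d\cos\beta = (s-d) + d(1-\cos\beta)$ differs from the paper's, which instead writes $2(s-d\cos\beta) = (s+d)(1-\cos\beta) + (s-d)(1+\cos\beta)$ and reduces to bounding the two ratios $\frac{1\mp\cos\beta}{1\mp\cos\beta_0'}$ separately. Your route is a legitimate alternative: the $T_1$ term is indeed trivially controlled, and the reduction of $T_2$ to $d(1-\cos\beta)\lesssim d_0(1-\cos\beta_0)$, and thence to
\[
(r'+r-d)(r'-r+d)\;\lesssim\;(r'+r-d_0)(r'-r+d_0),
\]
is correct. The gap is in this last step, which you only sketch. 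Writing the difference as $(d_0-d)(d_0+d-2r)$, the assertion that ``$d_0-d$ vanishes to third order, so the ratio remains bounded'' is not sufficient by itself: one needs this \emph{uniformly} in $(r,r',\theta')$, including regimes where the denominator $r'^2-(d_0-r)^2$ is small while $r/r'$ is unconstrained. Your Taylor expansions at $\theta'\to 0,\pi$ address the extremes, but there is no compactness argument bridging them to the interior, and the ``generic'' case is left as a heuristic.

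The paper handles the analogous difficulty (bounding the individual factors $\frac{r+r'-d}{r+r'-d_0}$, $\frac{r'-r+d}{r'-r+d_0}$, $\frac{d_0}{d}$) by a systematic device it records as an ``observation'': for a ratio $f/g$ of functions vanishing at the same endpoints, the extremum on $[0,\pi]$ occurs either at an endpoint (where l'H\^opital gives $f'/g'$) or at an interior critical point (where again $f/g=f'/g'$). Differentiating in $\theta'$ reduces each such ratio to $\frac{\partial_{\theta'}d}{\partial_{\theta'}d_0}\sim\frac{\sin\beta}{\sin\beta_0'}\sim\frac{d_0}{d}$ via the flat and spherical laws of sines, and $d_0/d$ is then bounded by a direct comparison of $d_0^2$ with $d_K^2$ using the spherical law of cosines. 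This closes the argument without any case analysis in $\theta'$ or appeal to higher-order smallness of $d_0-d$. You could complete your version with the same device: since the factor $\frac{r'-r+d}{r'-r+d_0}\le 1$, it suffices to bound $\frac{r+r'-d}{r+r'-d_0}$, and the observation reduces that to the same $d_0/d$ estimate.
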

\begin{proof}
\begin{align*}
&\partial_\eta \Big[\frac{1}{\sqrt{s^2-d^2}}\Big]= \frac{-1}{2(s^2-d^2)^{3/2}}\Big((s+d)\partial_\eta(s-d)+(s-d)\partial_\eta(s+d)\Big).\\
&\partial_\eta \Big[\frac{1}{\sqrt{s^2-d_0^2}}\Big]= \frac{-1}{2(s^2-d_0^2)^{3/2}}\Big((s+d_0)\partial_\eta(s-d_0)+(s-d_0)\partial_\eta(s+d_0)\Big).
\end{align*}
 Taking note that $s\geq d_0 \geq d \geq 0$ and the fact that $\partial_\eta(s\pm d)=1\pm \cos\beta \geq 0$ we get
\begin{align*}
&\frac{\partial_\eta \Big[\frac{1}{\sqrt{s^2-d^2}}\Big]}{\partial_\eta \Big[\frac{1}{\sqrt{s^2-d_0^2}}\Big]}\lesssim\frac{\partial_\eta(s-d)}{\partial_\eta(s-d_0)}+ \frac{\partial_\eta(s+d)}{\partial_\eta(s+d_0)}\\
&= \frac{1-\cos\beta}{1-\cos\beta^\p_0} + \frac{1+\cos\beta}{1+\cos\beta^\p_0}.
\end{align*}
Here $\beta^\p_0$ is the corresponding angle (facing $r^\p$) in the flat triangle with sides of lengths $r,~r^\p,$ and $d_0.$ To bound the first term, note that for fixed $\theta^\p$ (and hence fixed $d$) the numerator is bounded by $1-\cos\beta_K$ where $\beta_K$ is the spherical angle facing $r^\p$ in the triangle with sides $r,~r^\p,$ and $d.$ In the following computation we set $K=1$ for simplicity of notation.
\begin{align*}
&\frac{1-\cos\beta}{1-\cos\beta^\p_0}\leq \frac{1-\cos\beta_K}{1-\cos\beta^\p_0}\\
&=\Big(\frac{\sin r\sin d +\cos r\cos d-\cos r^\p}{2rd_0-r^2-d^2_0+{r^\p}^2}\Big)\Big(\frac{2rd_0}{\sin r\sin d}\Big)\\
&\lesssim \Big(\frac{\cos(d-r)-\cos r^\p}{{r^\p}^2-(d_0-r)^2}\Big)(\frac{d_0}{d})\sim \Big|\frac{\sin(\frac{(d-r-r^\p)}{2})\sin(\frac{(d+r^\p-r)}{2})}{(d_0-r-r^\p)(d_0+r^\p-r)}\Big|(\frac{d_0}{d})\\
&\sim\Bigg|\Big(\frac{d-r-r^\p}{d_0-r-r^\p}\Big)\Big(\frac{d+r^\p-r}{d_0+r^\p-r}\Big)\Big(\frac{d_0}{d}\Big)\Bigg|.
\end{align*}
We need to bound these three terms as $\theta^\p$ ranges in the interval $[0,\pi].$ Consider the first two terms first. To bound these in the interior of the interval, we set the derivative equal to zero to find that at a point where the derivative is zero $\frac{d-r-r^\p}{d_0-r-r^\p}=\frac{\partial_{\theta^\p}(d-r-r^\p)}{\partial_{\theta^\p}(d_0-r-r^\p)}=\frac{f(r)\sin\beta}{r\sin\beta^\p_0}\sim\frac{\sin\beta}{\sin\beta^\p_0},$ and similarly for the second term. Since $d=d_0$ at the end points $\theta^\p=0,\pi,$ in view of l'Hopital's rule, the first two terms are bounded by $\frac{\sin\beta}{\sin\beta^\p_0}$ or $1$ at the end points as well. According to the flat and spherical laws of sines $\frac{\sin\beta}{\sin\beta^\p_0}\sim\frac{d_0}{d}$ (note that in our comparison triangles the angles at the north poles are equal). Therefore to bound $\frac{1-\cos\beta}{1-\cos\beta^\p_0}$ we only need to bound $\frac{d_0}{d}.$ Let $d_K$ be the length of the side facing $\theta^\p$ in the spherical triangle with sides $r$ and $r^\p.$ Then
\begin{align*}
&\frac{d^2_0}{d^2}\leq\frac{d_0^2}{d^2_K} \sim \frac{d^2_0}{{\sin}^2 d_K}=\frac{d_0^2}{(1-\cos d_K)(1+\cos d_K)} \lesssim \frac{d_0^2}{1-\cos d_K}\\
& = \frac{r^2+{r^\p}^2-2rr^\p\cos\theta^\p}{1-\cos r\cos r^\p-\sin r\sin r^\p\cos\theta^\p}\\
&=\frac{(r-r^\p)^2+2rr^\p(1-\cos\theta^\p)}{(1-\cos (r-r^\p))+\sin r\sin r^\p(1-\cos\theta^\p)}\\
&\lesssim 1+ \frac{(r-r^\p)^2}{1-\cos(r-r^\p)}\lesssim 1.
\end{align*}
To complete the proof of the lemma it only remains to bound $\frac{1+\cos\beta}{1+\cos\beta^\p_0}.$ This is similar to the computation we just did. Again for fixed $\theta^\p$ $1+\cos\beta \leq 1+\cos\beta_0,$ so by a similar argument as before
\begin{align*}
&\frac{1+\cos\beta}{1+\cos\beta^\p_0}\leq \frac{1+\cos\beta_0}{1+\cos\beta^\p_0}\\
&\lesssim \Bigg|\Big(\frac{r+d-r^\p}{r+d_0-r^\p}\Big)\Big(\frac{r+r^\p+d}{r+r^\p+d_0}\Big)\Big(\frac{d_0}{d}\Big)\Bigg|.
\end{align*}
The exact same argument as before shows that this is bounded.
\end{proof}
We can now finish the proof of our Holder estimates. The lemma implies that
\begin{align*}
&\int_{K_2}\A\B\int_0^\pi \Big(\frac{1}{\sqrt{s^2-d^2}}+\partial_\eta \Big[\frac{1}{\sqrt{s^2-d^2}}\Big]\Big)d\theta^\p d\xi^\p d\eta^\p\\
&\lesssim \int_{K_2}\A\B\int_0^\pi \Big(\frac{1}{\sqrt{s^2-d_0^2}}+\partial_\eta \Big[\frac{1}{\sqrt{s^2-d_0^2}}\Big]\Big)d\theta^\p d\xi^\p d\eta^\p.
\end{align*}
According to lemmas $8.5$ and $8.6$ in \cite{SS1}
%(I'll also add the reference to the relevant paper later)
\begin{align*}
\Bigg|\int_0^\pi \partial_\eta\Big(\frac{1}{\sqrt{s^2-d_0^2}}\Big)d\theta^\p\Bigg| \lesssim \frac{1}{|\eta-\eta^\p||\eta^\p-\xi|^{1/2}(r+r^\p)^{1/2}}.
\end{align*}
On the other hand since in $K_2$ there holds $s-d\geq t-t^\p-r-r^\p = \xi-\eta^\p$ and $s+d\geq s \geq r+r^\p,$ we have
\begin{align*}
\int_0^\pi\frac{d\theta^\p}{\sqrt{s^2-d_0^2}}\leq \frac{\pi}{\sqrt{(r+r^\p)(\xi-\eta^\p)}}\lesssim\frac{1}{|\eta-\eta^\p||\eta^\p-\xi|^{1/2}(r+r^\p)^{1/2}}.
\end{align*}
Plugging these back into the inequality we had obtained for $(*)$ we get
\begin{align*}
(*)\lesssim \int\int_{K_2}\frac{\A\B}{(\eta-\eta^\p)\sqrt{(r+r^\p)(\xi-\eta^\p)}}d\xi^\p d\eta^\p.
\end{align*}
We bound $\A$ by ${r^\p}^{\delta-1/2}\X$ and estimate the integral with respect to $\xi^\p$ as follows
\begin{align*}
&\int_{-2-\eta^\p}^{\eta^\p}\frac{\B {r^\p}^{\delta-1/2}}{\sqrt{r+r^\p}}d\xi^\p\\
&\leq\Bigg(\int_{-2-\eta^\p}^{\eta^\p}\B^2d\xi^\p\Bigg)^{1/2}\Bigg(\int_0^{1+\xi}\frac{{r^\p}^{2\delta-1}}{r+r^\p}dr^\p\Bigg)^{1/2}\\
&\leq\epsilon\Bigg(\int_0^{1+\xi}\frac{{r^\p}^{2\delta-1}}{r+r^\p}dr^\p\Bigg)^{1/2}.
\end{align*}
Keeping in mind that $|\xi|$ is small the integral can be bounded as
\begin{align*}
&\int_0^{1+\xi}\frac{{r^\p}^{2\delta-1}}{r+r^\p}dr^\p=\int_0^r\frac{{r^\p}^{2\delta-1}}{r+r^\p}dr^\p+\int_r^{1+\xi}\frac{{r^\p}^{2\delta-1}}{r+r^\p}dr^\p\\
&\leq\frac{1}{r}\int_0^r{r^\p}^{2\delta-1}dr^\p+\int_r^{1+\xi^\p}{r^\p}^{2\delta-2}dr^\p \lesssim r^{2\delta-1}.
\end{align*}
It follows that
\begin{align*}
&(*) \lesssim r^{\delta-1/2}\epsilon\X\int_{-1}^\xi\frac{d\eta^\p}{(\eta-\eta^\p)\sqrt{\xi-\eta^\p}} \leq r^{\delta-1/2}\epsilon\X\int_{-1}^\xi\frac{d\eta^\p}{(t-\eta^\p)\sqrt{\xi-\eta^\p}}\\
&= -2r^{\delta-1}\epsilon\X\tan^{-1}\Big(\sqrt{\frac{\xi-\eta^\p}{r}}\Big)\Bigg|^\xi_{-1} \lesssim r^{\delta-1}\epsilon\X.
\end{align*}
This completes the proof of (\ref{IIbound}) with $II_1$ replaced by $II,$ which together with (\ref{Ibound}) imply
\begin{align*}
\X \lesssim 1+\epsilon\X.
\end{align*}
For small enough $\epsilon$ we get the boundedness of $\X.$ As shown at the beginning of this section this implies Holder continuity of $u.$
\section{Higher Regularity}
\begin{center}

\end{center}

\vspace*{0.5cm}
We start by noting that for $r$ small enough, $u_1^2+u_2^2$ is small, and therefore $u_1$ and $u_2$ can be used as coordinates on the target sphere:
\begin{align*}
&u_1^2+u_2^2=2\int_0^r(u_1,u_2)\cdot(\partial_ru_1, \partial_r u_2)dr\\
&\leq\Bigg(\int|\nabla u|^2rdr\Bigg)^{1/2}\Bigg(\frac{|Au|^2}{r}dr\Bigg)^{1/2}\leq \mathrm{energy} \leq \epsilon^2.
\end{align*}
Letting $v=(u_1,u_2)$ the equation for $u$ becomes
\begin{equation}\label{vequation}
\left\{\begin{array}{rcl}v_{tt}-v_{rr}-\frac{f^\p}{f}v_r+\frac{l^2}{f^2}v=Q\\Q:=\big(-u_\xi\cdot u_\eta +\frac{l^2}{f^2}|v|^2\big)v\end{array}\right\}.
\end{equation}

\begin{proposition}
With $D_i$ denoting covariant differentiation with respect to the push forward of $\partial_i,$ the following intertwining relations hold:
\begin{align*}
&(i)\big[\partial_r+\frac{l}{f}\big]\big[\partial_r^2+\frac{f^\p}{f}\partial_r-\frac{l^2}{f^2}\big]-\big[\partial_r^2+\frac{f^\p}{f}\partial_r-\frac{(l-1)^2}{f^2}\big]\big[\partial_r+\frac{l}{f}\big]\\
&=\Big(\frac{(1-{f^\p}^2)+2l(f^\p-1)+ff^{\p\p}}{f^2}\Big)\Big(\partial_r+\frac{l}{f}\Big).\\
&(ii)\big[D_r+\frac{l}{f}\big]\big[\partial_t^2-\partial_r^2-\frac{f^\p}{f}\partial_r+\frac{l^2}{f^2}\big]-\big[D_t^2-D_r^2-\frac{f^\p}{f}D_r+\frac{(l-1)^2}{f^2}\big]\big[\partial_r+\frac{l}{f}\big]\\
&=D_t(II(\partial_t,\partial_r))-D_r(II(\partial_r,\partial_r))+\frac{l(\partial_\xi\partial_\eta-D_\xi D_\eta)}{f}\\
&-\Big(\frac{(1-{f^\p}^2)+2l(f^\p-1)+ff^{\p\p}}{f^2}\Big)\Big(\partial_r+\frac{l}{f}\Big).
\end{align*}
\end{proposition}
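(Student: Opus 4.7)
The plan is to prove both identities by direct expansion as operator identities. For (i), I would apply the two compositions to a test function $g$ and collect terms of each order via the Leibniz rule. The leading $g'''$ terms cancel immediately, and the $g''$-coefficients are both $(f'+l)/f$ and also cancel. Using $(f'/f)' = f''/f - (f')^2/f^2$ and $(1/f^k)' = -kf'/f^{k+1}$, a short calculation shows that the coefficient of $g'$ in the difference simplifies to $[(1-(f')^2)+2l(f'-1)+ff'']/f^2$, while the coefficient of $g$ equals $l/f$ times the same quantity. The common factor then yields the claimed factorization as $[(1-(f')^2)+2l(f'-1)+ff'']/f^2 \cdot (\partial_r + l/f)$.

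For (ii), I would split the commutator into a purely algebraic piece and a geometric correction. Rewriting the operator as $\partial_t^2 - [\partial_r^2 + (f'/f)\partial_r - l^2/f^2]$ and noting that $\partial_t^2$ commutes with the $t$-independent operator $\partial_r + l/f$, the identity obtained by replacing every $D$ by $\partial$ reduces to the negative of the LHS of (i). By (i), this accounts exactly for the final term on the RHS of (ii) (the sign flip being absorbed by the opposite sign convention of the two wave-type operators). Thus the ``ordinary'' commutator produces the algebraic term $-[(1-(f')^2)+2l(f'-1)+ff'']/f^2 \cdot (\partial_r+l/f)$.

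The remaining task is to account for the discrepancy between ordinary and covariant derivatives of $u$ via the Gauss--Weingarten formulas $\partial_i\partial_j u = D_iD_j u + II(\partial_i u, \partial_j u)\nu(u)$ for $i,j \in \{t,r\}$. In characteristic coordinates, $\partial_t^2-\partial_r^2 = 4\partial_\xi\partial_\eta$, with the corresponding identity for covariant derivatives, so the mixed-derivative discrepancy is packaged as $\partial_\xi\partial_\eta - D_\xi D_\eta$. Substituting these Gauss--Weingarten relations into the LHS of (ii) and regrouping the resulting $II$-contributions, they collect precisely into $D_t(II(\partial_t,\partial_r))$, $-D_r(II(\partial_r,\partial_r))$, and $(l/f)(\partial_\xi\partial_\eta - D_\xi D_\eta)$, which together match the first three terms on the RHS. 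I expect the main obstacle to be careful bookkeeping of signs and of which outer covariant derivative ($D_r$ vs.\ $D_t$) is paired with each $II$-term---in particular, verifying that the mixed term emerges with the outer $D_t$ rather than $D_r$, which traces back to the fact that one side of the commutator uses an ordinary $\partial_t^2$ while the other uses $D_t^2$. This is a matter of algebra rather than any new conceptual idea.
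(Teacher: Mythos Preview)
Your proposal is correct and follows essentially the same approach as the paper: both parts are proved by direct expansion of the operator compositions, using the Leibniz rule together with $(f'/f)'=f''/f-(f')^2/f^2$ and $(1/f^k)'=-kf'/f^{k+1}$ for part (i), and the Gauss--Weingarten decomposition $\partial_i\partial_j u = D_iD_j u + II(\partial_i u,\partial_j u)$ for part (ii).

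One organizational difference worth noting: for (ii) the paper simply expands both compositions from scratch and subtracts, re-deriving the scalar commutator term in the process. Your plan to first replace every $D$ by $\partial$, invoke (i) to obtain the final term of the right-hand side immediately, and then isolate the covariant--ordinary discrepancy as the $II$-contributions, is a cleaner and more economical bookkeeping scheme than what the paper does. It buys you a shorter computation and makes the structure of the identity (scalar intertwining plus geometric correction) transparent. The only place requiring care, as you correctly anticipate, is tracking which outer derivative ($D_t$ versus $D_r$) wraps each $II$-term; this follows from the asymmetry that on the left side $D_r$ is applied to an operator built from ordinary $\partial_t^2$, while on the right side $D_t^2$ is applied to $\partial_r u$, so the $II(\partial_t,\partial_r)$ term arises with an outer $D_t$.
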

\begin{remark}
If we work on the the Minkowski plane with $l=0,$ the first part of the proposition reduces to
\begin{align*}
\partial_r\Box=(\Box+\frac{1}{r^2})\partial_r.
\end{align*}
This is used in \cite{CT} to deal with the $\theta-$invariant case. In the corotational case $l=1$ we have
\begin{align*}
(\partial_r+\frac{1}{r})(\partial_t^2-\partial_r^2-\frac{1}{r}\partial_r+\frac{1}{r^2})=(\partial_t^2-\partial_r^2-\frac{1}{r}\partial_r)(\partial_r+\frac{1}{r}).
\end{align*}
This is used in \cite{ST2} to prove higher regularity. The proposition is a generalization of these intertwining relations to a curved background with arbitrary rotation number $l$ and their extension to the covariant case.
\end{remark}
\begin{proof}
\begin{align*}
&\big[ \partial_r+\frac{l}{f}\big] \big[\partial^2_r+\frac{f^\p}{f}\partial_r-\frac{l^2}{f^2}\big]\\
&= \partial_r^3+\Big(\frac{f^{\p\p}f-{f^\p}^2}{f^2}\Big)\partial_r + \frac{f^\p}{f}\partial^2_r+\frac{2l^2f^\p}{f^3}-\frac{l^2}{f^2}\partial_r+\frac{l}{f}\partial^2_r+\frac{lf^\p}{f^2}\partial_r-\frac{l^3}{f^3}\\
&= \partial^3_r+\Big(\frac{f^\p+l}{f}\Big)\partial^2_r+\Big(\frac{f^{\p\p}f-{f^\p}^2+l(f^\p-l)}{f^2}\Big)\partial_r+\frac{l^2(2f^\p-l)}{f^3}.\\
\end{align*}
And
\begin{align*}
&\big[\partial^2_r+\frac{f^\p}{f}\partial_r-\frac{(l-1)^2}{f^2}\big]\big[\partial_r+\frac{l}{f}\big]\\
&=\partial^3_r+\frac{l}{f}\partial^2_r-\frac{2lf^\p}{f^2}\partial_r+\frac{l(2{f^\p}^2-f^{\p\p}f)}{f^3}+\frac{f^\p}{f}\partial_r^2+
\frac{lf^\p}{f^2}\partial_r- \frac{{lf^\p}^2}{f^3}\\
&-\frac{(l-1)^2}{f^2}\partial_r-\frac{l(l-1)^2}{f^3}\\
&=\partial^3_r+\Big(\frac{l+f^\p}{f}\Big)\partial_r^2-\frac{lf^\p+(l-1)^2}{f^2}\partial_r+\frac{l({f^\p}^2-f^{\p\p}f)-l(l-1)^2}{f^3}.
\end{align*}
Therefore
\begin{align*}
&\big[\partial_r+\frac{l}{f}\big]\big[\partial_r^2+\frac{f^\p}{f}\partial_r-\frac{l^2}{f^2}\big]-\big[\partial_r^2+\frac{f^\p}{f}\partial_r-\frac{(l-1)^2}{f^2}\big]\big[\partial_r+\frac{l}{f}\big]\\
&=\Big(\frac{(1-{f^\p}^2)+2l(f^\p-1)+ff^{\p\p}}{f^2}\Big)\Big(\partial_r+\frac{l}{f}\Big).
\end{align*}
For part (ii) let $\mathrm{proj}$ denote the projection on the tangent bundle of the target.
\begin{align*}
&\big[D_r+\frac{l}{f}\big]\big[\partial_t^2-\partial_r^2-\frac{f^\p}{f}\partial_r+\frac{l^2}{f^2}\big]\\
&=\mathrm{proj}(\partial_t\partial_{tr})-D_r(D_r\partial_r+II(\partial_r,\partial_r))-\frac{f^\p}{f}D_r\partial_r+\Big(\frac{{f^\p}^2-f^{\p\p}f}{f^2}\Big)\partial_r+\frac{l^2}{f^2}D_r\\
&-\frac{2l^2f^\p}{f^3}+\frac{l\partial_t^2}{f}-\frac{l\partial_r^2}{f}-\frac{lf^\p}{f^2}\partial_r+\frac{l^3}{f^3}\\
&=D_t^2\partial_r-D_r^2\partial_r-\frac{f^\p}{f}D_r\partial_r+\frac{l(\partial_t^2-\partial_r^2)}{f}+\frac{l^2}{f^2}D_r+\Big(\frac{{f^\p}^2-f^{\p\p}f-lf^\p}{f^2}\Big)\partial_r\\
&+\frac{l^2(l-2f^\p)}{f^3}+D_t(II(\partial_t,\partial_r))-D_r(II(\partial_r,\partial_r)).
\end{align*}
And
\begin{align*}
&\big[D_t^2-D_r^2-\frac{f^\p}{f}D_r+\frac{(l-1)^2}{f^2}\big]\big[\partial_r+\frac{l}{f}\big]\\
&=D_t^2\partial_r+\frac{lD^2_t}{f}-D^2_r\partial_r-\frac{l}{f}D^2_r+\frac{2lf^\p}{f^2}D_r+\frac{l(f^{\p\p}f-2{f^\p}^2)}{f^3}-\frac{f^\p}{f}D_r\partial_r-\frac{lf^\p}{f^2}D_r\\
&+\frac{l{f^\p}^2}{f^3}+\frac{(l-1)^2}{f^2}\partial_r+\frac{l(l-1)^2}{f^3}\\
&= D_t^2\partial_r-D_r^2\partial_r-\frac{f^\p}{f}D_r\partial_r+\frac{l(D_t^2-D_r^2)}{f}+\frac{lf^\p}{f^2}D_r+\frac{(l-1)^2}{f^2}\partial_r\\
&+\frac{l(f^{\p\p}f-{f^\p}^2+(l-1)^2)}{f^3}.
\end{align*}
Therefore noting that on the level of functions $D_r$ is the same as $\partial_r$
\begin{align*}
&\big[D_r+\frac{l}{f}\big]\big[\partial_t^2-\partial_r^2-\frac{f^\p}{f}\partial_r+\frac{l^2}{f^2}\big]-\big[D_t^2-D_r^2-\frac{f^\p}{f}D_r+\frac{(l-1)^2}{f^2}\big]\big[\partial_r+\frac{l}{f}\big]\\
&=D_t(II(\partial_t,\partial_r))-D_r(II(\partial_r,\partial_r))+\frac{l(\partial_\xi\partial_\eta-D_\xi D_\eta)}{f}\\
&-\Big(\frac{(1-{f^\p}^2)+2l(f^\p-1)+ff^{\p\p}}{f^2}\Big)\Big(\partial_r+\frac{l}{f}\Big).
\end{align*}
\end{proof}
In view of the first intertwining relation, we apply the differential operator $\partial_r +\frac{l}{f}$ to the equation for $v$ and let $w=v_r+\frac{lv}{f}$ to get:
\begin{equation}\label{wequation}
\left\{\begin{array}{rcl}&w_{tt}-w_{rr}-\frac{f^\p}{f}w_r+\frac{(l-1^2)w}{f^2}=F\\&F = \bigo\Big(|v|\big[|w_\xi||u_\eta|+|w_\eta||u_\xi|\\&~~~~~~~~~~~~~~~~~~~~~~~~+\frac{|v_\eta||v_\xi|}{r}+ \frac{|v|(|u_\eta|+|u_\xi|)}{r^2}+\frac{|v|^2}{r^3}\\&~~~~~~~~~~~~~~~~~~~~~~~~+\frac{|v||v_r|}{r^2}\big]+\frac{|v|^2|w|}{r^2}+|w||u_\xi||u_\eta|+|w|\Big) \end{array}\right\}.
\end{equation}
To see that $F$ satisfies this estimate note that
\begin{align*}
&F-\frac{({f^\p}^2-1)+2l(1-f^\p)-ff^\p}{r^2}w= \Big(\partial_r+\frac{l}{f}\Big)\big((-u_\eta\cdot u_\xi +\frac{l^2|v|^2}{f^2})v\big)\\
&=(-u_\eta\cdot u_\xi +\frac{|v|^2}{f^2})w+(-u_{r\xi}\cdot u_\eta-u_\xi \cdot u_{r\eta}+\frac{2f^\p l^2|v|^2}{f^3}+\frac{2l^2v\cdot v_r}{f^2})v.
\end{align*}
Now $|u_{r\xi}| \lesssim |v_{r\xi}|$ and $|u_{r\eta}| \lesssim |v_{r\eta}|.$ Moreover since $v_r = w - \frac{lv}{f},$ we have the bound:
\begin{align*}
|u_{r\xi}|\lesssim |w_\xi| + \frac{|v_\xi|}{f}+\frac{|f^\p v|}{f^2},
\end{align*}
and similarly
\begin{align*}
|u_{r\eta}|\lesssim |w_\eta| + \frac{|v_\eta|}{f}+\frac{|f^\p v|}{f^2}.
\end{align*}
The bound on $F$ now follows from the following estimates coming from the expansion for $f$ in the section on the existence of stationary maps
\begin{equation}\label{fbounds}
\frac{(1-f^\p)}{r^2}\lesssim 1,~\frac{1-{f^\p}^2}{r^2}\lesssim 1,~\frac{ff^{\p\p}}{r^3}\lesssim \frac{1}{r}.
\end{equation}
To prove higher regularity we want to bound $w_\eta$ in the same way as we bounded $u_\eta.$ To this end we define
\begin{equation}\label{atildebtilde}
\left\{\begin{array}{rcl}&\tilde{\A}^2 := f(r)(\tilde{e}+\tilde{m})= \frac{f}{2}(|\partial_\eta w|^2 + \frac{(l-1)^2|w|^2}{f^2})\\
&\tilde{B}^2 := f(r)(\tilde{e}-\tilde{m})= \frac{f}{2}(|\partial_\xi w|^2 + \frac{(l-1^2)|w|^2}{f^2})\\
&\Y(\tilde{t}) = \sup_{Z(\tilde{t})}\big\{(f(r))^{(\frac{1}{2}-\alpha)}\tilde{\A}(t,r)\big\},~\alpha\in (0,1/2)~fixed\\
& \Y = \Y(0) \end{array}\right\}.
\end{equation}
The main difference here is that since the energy of $w$ is no longer small the corresponding flux cannot be bounded by a small constant. This problem will surface in bounding the terms involving $w_\xi.$ There are two approaches for dealing with this issue. One is to perform an integration by parts and move the $\xi$ derivative on the fundamental solution. This involves finding estimates on the second derivative of the fundamental solution, and in the flat case it is carried out in \cite{CT}. The simpler method is based on covariantly differentiating the equation for $v.$ This will allow us to bound the terms involving $w_\xi$ in terms of $\Y$ and the other terms appearing in the nonlinearity. In the flat case this is done in \cite{SS1}, and this is the approach we choose to follow.\\
Note that since $w$ vanishes at $r=0,$ we can bound it as
\begin{equation}\label{wbound}
|w|\lesssim r^\alpha \Y.
\end{equation}
Moreover, multiplying $w=v_r+\frac{lv}{f}$ by $e^{\int\frac{ldr}{f}}$ we get $\partial_r(e^{\int\frac{ldr^\p}{f}}v)=e^{\int\frac{ldr}{f}}w$ and therefore
\begin{align*}
e^{\int\frac{ldr}{f}}v=\int_0^re^{\int\frac{ldr^{\p\p}}{f}} w dr^\p.
\end{align*}
Together with (\ref{wbound}) this implies that
\begin{equation}\label{vbound}
\frac{|v|}{r}\lesssim r^\alpha \Y,
\end{equation}
and since $v_r= w-\frac{lv}{r},$
\begin{equation}\label{vrbound}
|v_r|\lesssim r^\alpha\Y.
\end{equation}
Applying $D_r+\frac{l}{f}$ to (\ref{vequation}) and using part (ii) of the proposition above we get:
\begin{equation}\label{wcovariantequation}
\left\{\begin{array}{rcl}&D_t^2w-D^2_r w -\frac{f^\p}{f}D_rw+\frac{(l-1)^2w}{f^2}= G\\
&G = \bigo\Big(\frac{|w||v|^2}{r^2}+|u_\xi||u_\eta||w|\\
&%%
+((dII)(u_t)(u_t,u_r)-(dII)(u_r)(u_r,u_r))\\
&+\frac{|(\partial_\xi \partial_\eta - D_\xi D_\eta)v|}{r}+ |w|\Big)\end{array}\right\}.
\end{equation}
Here we have used the fact that since $v$ is normal to the target, $D_r(fv)= fD_rv$ for any function $f$. Similarly since the second fundamental form is normal to the the target $D_{\partial_k}II(u)(u_i,u_j)= (dII)(u_k)(u_i,u_j).$ \\
Since
\begin{align*}
&(dII)(u_t)(u_t,u_r)-(dII)(u_r)(u_r,u_r)\\
&= \frac{1}{2}\big[(dII)(u_\xi)(u_\eta,u_r)-(dII)(u_\eta)(u_\xi,u_r)\big]=\bigo(|u_\eta||u_\xi||v_r|),
\end{align*}
$G$ is
%\begin{align*}
%&\bigo\Big(\frac{|w||v|^2}{r^2}+|u_\xi||u_\eta||w|+|u_\eta||u_\xi||v_r|+\frac{|(\partial_\xi \partial_\eta - D_\xi D_\eta)v|}{r}
%\end{align*}
\begin{equation}\label{Gbound}
\bigo\Big(\frac{|w||v|^2}{r^2}+|u_\xi||u_\eta||w|+|u_\eta||u_\xi||v_r|+\frac{|(\partial_\xi \partial_\eta - D_\xi D_\eta)v|}{r}+  |w|\Big).
\end{equation}
Note that $w_\xi$ does not appear in this expression which is what we had hoped to gain by differentiating covariantly.\\ \\
As mentioned before, the main difficulty is dealing with terms involving $w_\xi.$ We need to gain control over the flux associated with $w$ and for this we define
\begin{align*}
\Z^2:= \sup_{(t,r)\in K}\int_{\{\eta^\p=\eta, -2-\eta\leq\xi^\p\leq\xi\}}\Big(|w_\xi|^2+\frac{(l-1)^2|w|^2}{f^2}\Big) r^\p d\xi^\p.
\end{align*}
The key step in the proof of higher regularity is the following
\begin{lemma}
$\Z\lesssim \Y+1.$
\end{lemma}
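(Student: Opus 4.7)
The plan is to run an energy–flux identity on the covariantly differentiated equation (\ref{wcovariantequation}). The decisive point is that the nonlinearity $G$ in (\ref{Gbound}) does \emph{not} contain $w_\xi$; this feature replaces the role of the smallness of the energy, which is unavailable for $w$. Multiplying (\ref{wcovariantequation}) by $f(r)\,D_t w$ one obtains a divergence identity
$$\partial_t(f\tilde e)+\partial_r(f\tilde m)=fG\cdot D_t w,$$
where $\tilde e,\tilde m$ are the natural energy and momentum densities for $w$, arranged so that $\tilde e-\tilde m=\frac{1}{2}(|w_\xi|^2+\tfrac{(l-1)^2|w|^2}{f^2})=\tilde B^2/f$. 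Integrating over the region in the backward cone $K$ bounded by the incoming characteristic $\eta^\p=\eta$ and the initial slice $t=-1$ and applying Stokes's theorem yields
$$\Z^2\lesssim\|w(\cdot,-1)\|_E^2+\iint_K f|G|(|w_\eta|+|w_\xi|)\,d\xi^\p d\eta^\p.$$
The first term is a bounded constant coming from the smoothness of the initial data, producing the ``$1$'' in the claim.

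For the contribution $|G||w_\eta|$ I would factor out the pointwise bound $|w_\eta|\lesssim r^{\alpha-1}\Y$ coming from the definition of $\Y$, and then bound each term in $G$ by its ``pointwise profile'': use (\ref{wbound}), (\ref{vbound}), (\ref{vrbound}) to replace $|w|,|v|/r,|v_r|$ by $r^\alpha\Y$, and use the Hölder/characteristic estimates for $u_\xi,u_\eta$ from the previous section (together with $\int\B^2 d\xi^\p\leq\epsilon^2$) to handle the $u$-factors. The ensuing iterated characteristic integrals are of the same form as the ones estimated in bounding $\X$, and contribute a total of $\lesssim 1+\Y^2$.

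For the contribution $|G||w_\xi|$, which is the point where a direct pointwise bound would fail, the idea is to apply Cauchy–Schwarz in $\xi^\p$ against the weight $r^\p d\xi^\p$:
$$\iint f|G||w_\xi|\,d\xi^\p d\eta^\p\leq \int_\xi^\eta\Big(\int|G|^2 r^\p d\xi^\p\Big)^{1/2}d\eta^\p\cdot\Z.$$
Bounding $\int|G|^2 r^\p d\xi^\p$ term by term, using the same pointwise profiles and, crucially, the small energy-flux identity for $u$ to control the $|u_\xi|^2$-type pieces, one obtains a factor of order $(1+\Y)$ carrying a small coefficient (a power of $r$ together with $\epsilon$). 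Young's inequality then absorbs the resulting $\Z^2$ contribution into the left-hand side, leaving $\Z^2\lesssim 1+\Y^2$, hence $\Z\lesssim 1+\Y$.

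The main obstacle is the careful bookkeeping in the step where $|G|$ is paired with $|w_\xi|$: each of the five pieces of $G$ in (\ref{Gbound})---in particular $\tfrac{|w||v|^2}{r^2}$, $|u_\xi||u_\eta||w|$, and the second-fundamental-form term $\tfrac{|(\partial_\xi\partial_\eta-D_\xi D_\eta)v|}{r}\lesssim\tfrac{|u_\xi||u_\eta||v|}{r}$---must be shown to generate a coefficient small enough (from $\epsilon$, from powers of $r$, or both) to close the Young-inequality absorption. The vanishing $w(r=0)=0$, which powers (\ref{wbound})–(\ref{vrbound}), is indispensable here, and the cancellation of $w_\xi$ from $G$ produced by the intertwining relation of Proposition 1(ii) is what makes the whole scheme viable.
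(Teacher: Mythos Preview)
Your overall strategy---multiply the covariant equation (\ref{wcovariantequation}) by $D_tw$, integrate over the cone, and bound the resulting flux by initial data plus a source term involving $G$---is the same as the paper's. The difference lies in how the source $\iint f\,G\cdot D_tw$ is handled. The paper does not split $D_tw$ into $w_\eta$ and $w_\xi$; instead it uses $|G\cdot D_tw|\leq\tfrac{1}{2}|G|^2+\tfrac{1}{2}|D_tw|^2$ and then runs a \emph{double Gronwall} argument (first on $\Lambda(t)=\int_{K_t}|D_tw|^2$, then on the truncated fluxes $\int_{C_\tau}$) to eliminate the $|D_tw|^2$ contribution entirely, reducing everything to the single claim $\iint_K|G|^2 r'\,dr'dt'\lesssim 1+\Y^2$, which is proven term by term. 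Your direct Cauchy--Schwarz plus Young absorption is a legitimate alternative that sidesteps Gronwall.

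However, your assertion that $\int_\xi^\eta\big(\int|G|^2 r'\,d\xi'\big)^{1/2}d\eta'$ carries a \emph{small} coefficient is not correct for every piece of $G$. For the bare $|w|$ term (coming from the error in the intertwining relation, cf.~(\ref{fbounds})) one has $\int|w|^2 r'\,d\xi'\lesssim \Y^2\int(r')^{2\alpha+1}d\xi'$, which is bounded but \emph{not} small: the supremum defining $\Z$ ranges over all $(t,r)\in K$, so $r'$ can be of order one. What actually saves the argument is that smallness is unnecessary: after Cauchy--Schwarz the cross term has the form $C(1+\Y)\Z$ with bounded $C$, and Young's inequality $C(1+\Y)\Z\leq\tfrac{1}{2}\Z^2+\tfrac{C^2}{2}(1+\Y)^2$ absorbs the $\Z^2$ regardless of the size of $C$. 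So your scheme closes, but for a different reason than you state.

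One further point: your estimate $\tfrac{|(\partial_\xi\partial_\eta-D_\xi D_\eta)v|}{r}\lesssim\tfrac{|u_\xi||u_\eta||v|}{r}$ is not automatic. The paper establishes it by an explicit computation of the Christoffel symbols of the target $S^2$ in the $(x,y)$ chart, showing $\Gamma^m_{ij}(X)=O(|X|)$ near the pole; this is what produces the crucial extra factor of $|v|$. You should flag this as a nontrivial separate step (and it is the one place where the target is assumed to be the round sphere).
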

\begin{proof}
The proof is an energy conservation argument based on the divergence theorem. Let $n=l-1$ and multiply equation (\ref{wcovariantequation}) by $D_tw$ to get
\begin{align*}
&\mathrm{div}_{t,r,\theta^\p}\Big(\frac{|D_tw|^2+|D_rw|^2+|\frac{nw}{f}|^2}{2}, -D_tw\cdot D_rw,0\Big)=G\cdot D_tw.
\end{align*}
%Therefore with $K$ denoting the backward light cone (only in the $r$ and $t$ variables), $C$ its lateral boundary and $B_t$ the cross section at time $t$
%\begin{align*}
%&\int_KG\cdot D_tw~ r^\p dr^\p dt^\p = \int_C\Big(|D_\xi w|^2+\frac{(l-1)^2|w|^2}{f^2}\Big) r^\p d\mu_C\\
%&+\frac{1}{2}\int_{B_{-1}}(|D_tw|^2+|D_rw|^2+\frac{(l-1)^2|w|^2}{f^2})r^\p dr^\p
%\end{align*}
%Here we have integrated, and canceled out the contribution of, the $\theta$ variable. With $C_t$ denoting the truncated lateral boundary between $B_{-1}$ and $B_t,$ another application of the divergence theorem gives
%\begin{align*}
%&\int_{B_t}(|D_tw|^2+|D_rw|^2+\frac{(l-1)^2|w|^2}{f^2})r^\p dr^\p\\
%&\lesssim \int_{B_{-1}}(|D_tw|^2+|D_rw|^2+\frac{(l-1)^2|w|^2}{f^2})r^\p dr^\p + \\
%&\int_{C_t}(|D_\xi w|^2+\frac{(l-1)^2|w|^2}{f^2})r^\p d\xi^\p
%\end{align*}
%Putting these together we get
%\begin{align*}
%&\int_{C}(|D_\xi w|^2+\frac{(l-1)^2|w|^2}{f^2}r^\p d\xi^\p\\
%&\lesssim 1+\int_K|G|^2 r^\p d\xi^\p d\eta^\p  +\int_{-1}^0\Big(\int_{C_t}(|D_\xi w|^2+\frac{(l-1)^2|w|^2}{f^2})r^\p d\xi^\p\Big)dt
%\end{align*}
%and by Gronwall's inequality
%\begin{align*}
%&\int_{C}(|D_\xi w|^2+\frac{(l-1)^2|w|^2}{f^2})r^\p d\xi^\p\\
%&\lesssim 1+ \int_K|G|^2r^\p d\xi^\p d\eta^\p + \int_{-1}^0\Big(1+ \int_K|G|^2r^\p d\xi^\p d\eta^\p\Big)e^{-t}dt\\
%&\lesssim 1+ \int_K|G|^2r^\p d\xi^\p d\eta^\p
%\end{align*}
Therefore with $K$ denoting the backward light cone (only in the $r$ and $t$ variables), $C$ its lateral boundary and $B_t$ the cross section at time $t$
\begin{align*}
&\int_KG\cdot D_tw~ r^\p dr^\p dt^\p = \int_C\Big(|D_\xi w|^2+\frac{(l-1)^2|w|^2}{f^2}\Big) r^\p d\mu_C\\
\end{align*}
\begin{equation}\label{gronwall}
+\frac{1}{2}\int_{B_{-1}}(|D_tw|^2+|D_rw|^2+\frac{(l-1)^2|w|^2}{f^2})r^\p dr^\p.
\end{equation}
Here we have integrated and canceled out the contribution of the $\theta$ variable. With $C_t$ denoting the truncated lateral boundary between $B_{-1}$ and $B_t,$ another application of the divergence theorem gives, for any $-1<\tau<0$
\begin{align*}
&\int_{B_\tau}(|D_tw|^2+|D_rw|^2+\frac{(l-1)^2|w|^2}{f^2})r^\p dr^\p\\
&\lesssim \int_{B_{-1}}(|D_tw|^2+|D_rw|^2+\frac{(l-1)^2|w|^2}{f^2})r^\p dr^\p\\
&+\int_{C_\tau}(|D_\xi w|^2+\frac{(l-1)^2|w|^2}{f^2})r^\p d\xi^\p\\
&+\int_K|G|^2r^\p dr^\p dt^\p +\int_{K_\tau}|D_tw|^2dr^\p dt^\p.
\end{align*}
Defining $\Lambda(t)=\int_{K_t}|D_tw|^2$ and integrating the inequality above we get
\begin{align*}
&\Lambda(t)\lesssim 1+\int_K|G|^2r^\p dr^\p dt^\p\\
&+\int_{-1}^t\int_{C_\tau}(|D_\xi w|^2+\frac{(l-1)^2|w|^2}{f^2})r^\p d\xi^\p d\tau+\int_{-1}^t\Lambda(\tau)d\tau,
\end{align*}
and therefore by Gronwall's inequality
\begin{align*}
\Lambda(t)\lesssim1+\int_K|G|^2)r^\p dr^\p dt^\p+ \int_{-1}^t\int_{C_\tau}(|D_\xi w|^2+\frac{(l-1)^2|w|^2}{f^2})r^\p d\xi^\p d\tau.
\end{align*}
Putting these together with (\ref{gronwall}) we get
\begin{align*}
&\int_{C}(|D_\xi w|^2+\frac{(l-1)^2|w|^2}{f^2}r^\p d\xi^\p\\
&\lesssim 1+\int_K|G|^2 r^\p d\xi^\p d\eta^\p  +\int_{-1}^0\Big(\int_{C_t}(|D_\xi w|^2+\frac{(l-1)^2|w|^2}{f^2})r^\p d\xi^\p\Big)dt.
\end{align*}
Similarly for any $-1<\tau<0$
\begin{align*}
&\int_{C_{\tau}}(|D_\xi w|^2+\frac{(l-1)^2|w|^2}{f^2})r^\p d\xi^\p\\
&\lesssim 1+\int_K|G|^2 r^\p d\xi^\p d\eta^\p  +\int_{-1}^\tau\Big(\int_{C_t}(|D_\xi w|^2+\frac{(l-1)^2|w|^2}{f^2})r^\p d\xi^\p\Big)dt,
\end{align*}
and by Gronwall's inequality
\begin{align*}
\int_{C}(|D_\xi w|^2+\frac{(l-1)^2|w|^2}{f^2})r^\p d\xi^\p\lesssim 1+ \int_K|G|^2r^\p d\xi^\p d\eta^\p.
\end{align*}
We now claim that
\begin{align*}
\int_{K}|G|^2r^\p d\xi^\p d\eta^\p \lesssim 1+ \Y^2.
\end{align*}
By the definition of covariant differentiation $|\partial_\xi w|^2\lesssim |D_\xi w|^2 + |u_\xi|^2|w|^2$ and therefore assuming the claim
\begin{align*}
\int_{C}(|\partial_\xi w|^2+\frac{(l-1)^2|w|^2}{f^2})r^\p d\xi^\p \lesssim 1+ \Y^2.
\end{align*}
Taking the supremum over $r$ and $t$ we obtain the desired bound
\begin{align*}
\Z \lesssim \Y +1.
\end{align*}
To prove the claim, we treat the terms in (\ref{Gbound}) separately. First by (\ref{wbound}), (\ref{vrbound}), and the bounds from the previous section
\begin{align*}
\int_K|u_\xi|^2|u_\eta|^2(|w|^2+|v_r|^2)r^\p d\xi^\p d\eta^\p \lesssim \Y^2\int_K\frac{|u_\xi|^2r^\p}{{r^\p}^{(2-2\delta-2\alpha)}}d\xi^\p d\eta^\p.
\end{align*}
According to equation (\ref{fluxbound}) from the previous section, if $1-\delta-\alpha<\delta$ the above integral is bounded.\\
Next since $|v|\lesssim r^\delta,$
\begin{align*}
\Big(\frac{|w||v|^2}{r^2}\Big)^2\lesssim r^{2\alpha+4\delta-4}\Y^2.
\end{align*}
Again if $\alpha+2\delta-1>0,$ the integral of ${r^\p}^{2\alpha+4\delta-4}$ over $K$ with respect to the measure $r^\p dt^\p dr^\p$ is finite and this takes care of the $\frac{|w||v|^2}{r^2}$ term. $|w|^2\lesssim r^{2\alpha}\Y^2$ which integrates to a finite multiple of $\Y^2,$ so it only remains to consider $\frac{\big|\partial_\xi\partial_\eta v- D_\xi D_\eta v\big|}{r}.$ For this we need a precise geometric computation. Remember that for us $v(\xi,\eta)=(v_1(\xi,\eta),v_2(\xi,\eta))$ is a map from $\RR^2$ to $\RR^2,$ where the target $\RR^2$ is equipped with the metric
\begin{align*}
g=\frac{1}{(1-x^2-y^2)}\Bigl(\begin{matrix} 1-y^2 & xy\\ xy & 1- x^2\end{matrix}\Bigr).
\end{align*}
This is the metric on $S^2$ under the parametrization
\begin{align*}
(x,y)\rightarrow (x,y,\sqrt{1-x^2-y^2}).
\end{align*}
For future use, we note that the inverse of $g$ is given by
\begin{align*}
g^{-1}=\Bigl(\begin{matrix}1- x^2 & -xy\\ -xy & 1- y^2\end{matrix}\Bigr).
\end{align*}
Now with $x_1=x$ and $x_2=y$
\begin{align*}
&D_\xi D_\eta v = D_\xi(\frac{\partial v}{\partial \eta}) = D_\xi(\sum_i \frac{\partial v_i}{\partial\eta} \frac{\partial}{\partial x_i})\\
&=\sum_i\frac{\partial^2v_i}{\partial\xi\partial\eta}\frac{\partial}{\partial x_i}+\sum_{i,j}\frac{\partial v_j}{\partial\xi}\frac{\partial v_i}{\partial\eta}\nabla_{\frac{\partial}{\partial x_j}}\frac{\partial}{\partial x_i},
\end{align*}
whereas
\begin{align*}
\partial_\xi\partial_\eta v= \sum_i\frac{\partial^2v_i}{\partial\xi\partial\eta}\frac{\partial}{\partial x_i}.
\end{align*}
It follows that
\begin{align*}
(D_\xi D_\eta-\partial_\xi\partial_\eta)v= \sum_{i,j}\frac{\partial v_j}{\partial\xi}\frac{\partial v_i}{\partial\eta}\nabla_{\frac{\partial}{\partial x_j}}\frac{\partial}{\partial x_i}.
\end{align*}
Note that everywhere in these computations $\frac{\partial}{\partial x_i}= \frac{\partial}{\partial x_i}(v).$ It follows that if we can show that
\begin{align*}
\nabla_{\frac{\partial}{\partial x_j}}\frac{\partial}{\partial x_i} = \bigo(|X|) ~~\mathrm{as}~|X|=|(x,y)|\rightarrow 0, \forall i,j,
\end{align*}
then
\begin{align*}
\frac{\big|(D_\xi D_\eta-\partial_\xi\partial_\eta)v\big|}{r}\lesssim \frac{|v_\eta||v_\xi||v|}{r},
\end{align*}
which in view of (\ref{vbound}) can be dealt with in the same was as $|u_\xi||u_\eta||w|.$ For this it suffices to show that the Christoffel symbols $\Gamma^m_{ij}(X)$ are $\bigo(|X|)$ for all $i,j,$ and $m.$ Since
\begin{align*}
\Gamma^m_{ij} = \frac{1}{2}\sum_k\Big{\{}\frac{\partial g_{jk}}{\partial x_i}+\frac{\partial g_{ki}}{\partial x_j}-\frac{\partial g_{ij}}{\partial x_k}\Big{\}}g^{km},
\end{align*}
and since the components of $g^{-1}$ are bounded near the origin, it suffices to prove that $\frac{\partial g_{ki}}{\partial x_j}(X) = \bigo(|X|)$ for all $i,j,$ and $k.$ This is a direct derivative computation. By the symmetry of the components of $g$ in $x$ and $y$ the following three computations finish the proof:
\begin{align*}
&\frac{\partial g_{xx}}{\partial x}= \frac{\partial}{\partial x}\big[\frac{1-y^2}{1-y^2-x^2}\big]= \frac{2x(1-y^2)}{(1-x^2-y^2)^2} = \bigo(|X|),\\
&\frac{\partial g_{xx}}{\partial y}= \frac{\partial}{\partial y}\big[\frac{1-y^2}{1-y^2-x^2}\big]= \frac{2yx^2}{(1-x^2-y^2)^2} = \bigo(|X|),\\
&\frac{\partial g_{xy}}{\partial x}= \frac{\partial}{\partial x}\big[\frac{xy}{1-y^2-x^2}\big]= \frac{y(1+x^2-y^2)}{(1-x^2-y^2)^2} = \bigo(|X|).
\end{align*}
\end{proof}
Now notice that $W_1:=w\cos (l-1)\theta$ and $W_2:=w\sin(l-1)\theta$ satisfy $\Box W_1=F\cos (l-1)\theta$ and $\Box W_2=F\sin(l-1)\theta.$ Since $w=W_1\cos (l-1)\theta+W_2 \sin(l-1)\theta$ it suffices to obtain holder bounds on $W_1$ and $W_2$ instead of $w.$ This implies that there is no loss of generality in assuming that $w$ satisfies $\Box w = F.$ The bounds (\ref{vbound}), (\ref{vrbound}), and (\ref{wbound}) together with (\ref{wequation}) imply that $F$ is bounded by a combination of terms which are bounded by one of $\frac{\A\B\Y}{r},\frac{\tilde{\A}\B}{r},\frac{\tilde{\B}\A}{r}$ or $r^\alpha\Y.$ Therefore if we choose $\alpha < \delta,$ by the same exact argument as in the previous section
\begin{align*}
\Y\lesssim 1+ \epsilon(\Z+\Y).
\end{align*}
Indeed the first two terms yield a factor of $\epsilon$ because $\B$ appears in them. For the third term the argument from the previous section shows that $|\partial_\eta w|\lesssim r^{\delta-1}\Z,$ so we get the desired bound if we choose $\alpha<\delta$ and $r$ so small that $r^{\delta-\alpha}<\epsilon.$ Finally recall that in the previous section where the nonlinearity was bounded by $\frac{\A\B}{r},$ we replaced $\A$ by its upper bound $r^{\delta-\frac{1}{2}}$ which yielded $|u_\eta|\lesssim \epsilon\X r^{1-\delta}$. Since $r^\alpha\Y\lesssim r^{\delta-\frac{3}{2}}\Y,$ by the same argument this term contributes $r^{\delta-\alpha}\Y$ which is good as long as $\alpha<\delta$. Together with the lemma this implies that upon choosing $\epsilon$ small enough, $\Y$ and $\Z$ are bounded. It follows that $|w|,$ and in particular $|v_r|,$ are bounded. Moreover the bound on $u_\eta$ from the previous section implies that for a fixed $r=R, |v_\eta(t,R)|\lesssim R^{\delta-1},$ and therefore
\begin{align*}
|v_\eta(t,r)|\lesssim R^{\delta-1}+\int_R^r|\partial_\eta v_r(t,r^\p)|dr^\p\lesssim R^{\delta-1}+\int_R^r r^{\alpha-1}dr^\p\lesssim 1.
\end{align*}
Since $\partial_r$ and $\partial_\eta$ are linearly independent, we conclude that the full gradient of $v$ is bounded, and higher regularity follows.
\section{Stability}
\begin{center}

\end{center}

\vspace*{0.5cm}
In this section we prove a stability result for the stationary maps of the first section. The proof here is almost word by word the same as that in \cite{ST1} and I reproduce it only for completeness. I will also provide more details in a few places. We start by introducing some notation.\\
Let $\cS$ be the set of minima of $\cGo$ in $X_l.$ Let $Y:=H^1(S^2)\times L^2 (S^2)$ with the product norm, and let $d$ denote the associated distance function $d(u,v)=\|u-v\|_Y$ for $u,v \in Y.$ For any compact $K$ in $Y$ we define as usual $d(u,K):=\inf_{v\in K}d(u,v).$\\
Let $\mathfrak{S}:= \{(v,\omega Av)\}_{v\in\cS}.$ Our results from the elliptic section show that $\mathfrak{S}$ is compact in $Y.$ Finally for any finite energy map $u:S^2\times \RR \rightarrow S^2$ let $\uu^t:=(u(t),\partial_t u(t)) \in Y.$
\begin{theorem}
There exists $\eta>0$ such that for all $T^*>0,$ if $u$ is a classical solution to our Cauchy problem on $[0,T^*)$ satisfying $\sup_{t\in [0,T*)}d(\uu^t,\mathfrak{S})<\eta,$ then $u$ is smooth in $S^2\times [0,T^*].$
\end{theorem}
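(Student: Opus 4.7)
The plan is to argue by contradiction, assuming $u$ fails to extend smoothly to $t=T^*$. By the local existence theory and finite speed of propagation together with the $SO(2)$-equivariance, first singularities may only form at one of the two poles $p,q$ (after a time translation, say at $(p,T^*)$), so it suffices to rule out blow-up of the local energy in a backward light cone emanating from $(p,T^*)$. The small-energy Cauchy theory (Theorem 3) and the higher-regularity result of Section 4 will furnish a contradiction, provided I can show that the local energy of $u$ in a small ball around $p$ is below the threshold $\epsilon$ for all $t$ sufficiently close to $T^*$.

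The first step is to translate closeness to $\mathfrak{S}$ into smallness of local energy. Because each $v\in \cS$ is a smooth equivariant map, its local Dirichlet energy plus $\tfrac{\omega^2}{2}\|Av\|^2$ on a geodesic ball $B_\rho(p)$ tends to zero as $\rho\to 0$. By Theorem 1, $\cS$ is compact in $H^1(S^2;S^2)$, hence $\mathfrak{S}$ is compact in $Y$. It follows by a standard uniform-integrability argument that for every $\epsilon_0>0$ there is a $\rho_0>0$ with
\begin{equation*}
\sup_{\tilde v\in \mathfrak{S}}\; E_{B_{\rho_0}(p)}(\tilde v)\;<\;\epsilon_0/4,
\end{equation*}
where $E_{B_{\rho_0}(p)}(\cdot)$ denotes the localized energy density integrated over $B_{\rho_0}(p)$. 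Given $t\in[0,T^*)$, pick $\tilde v_t=(v_t,\omega A v_t)\in\mathfrak{S}$ with $d(\uu^t,\tilde v_t)<\eta$. Since the energy is a continuous quadratic form on $Y$ restricted to $B_{\rho_0}(p)$, the elementary inequality $E^{1/2}_{B_{\rho_0}(p)}(\uu^t)\leq E^{1/2}_{B_{\rho_0}(p)}(\tilde v_t)+C\eta$ gives $E_{B_{\rho_0}(p)}(\uu^t)<\epsilon_0$, uniformly in $t\in[0,T^*)$, provided $\eta$ is chosen sufficiently small depending on $\epsilon_0$.

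The second step is to feed this local energy bound into the regularity machinery of Sections 3 and 4. Take $\epsilon_0<\epsilon$, the threshold of Theorem 3. Let $t_0\in[0,T^*)$ be such that $T^*-t_0<\rho_0/2$, and consider the truncated backward characteristic cone with apex $(p,T^*)$ and base on $\{t=t_0\}$. By finite speed of propagation, this cone lies in $B_{\rho_0}(p)\times[t_0,T^*]$, so the restriction of $\uu^{t_0}$ to this base has energy $<\epsilon$. Applying Theorem 3 in the local coordinates $(r,\theta)$ near $p$ (as carried out in Sections 3 and 4) with data prescribed at $t_0$ yields a smooth solution on the full cone, in particular up to and including $(p,T^*)$. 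Combined with smoothness away from the pole (which follows from standard energy methods since no concentration occurs there), this contradicts the assumed loss of regularity at $T^*$.

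The main obstacle, and essentially the only non-trivial step, is the uniform control of the local energy of the stationary family on small balls around $p$. This requires the compactness statement of Theorem 1 in the strong $H^1$ topology, which delivers uniform absolute continuity of the integrals $\int_{B_\rho(p)}|\nabla v|^2$ over $v\in\cS$; without strong $H^1$ compactness (and not merely weak convergence along the minimizing sequence) one would not be able to pass from the pointwise-in-$v$ smallness to the uniform-in-$v$ smallness, and the scheme would fail. Once that is in hand, the reduction to Theorem 3 and to the higher-regularity estimates of Section 4 proceeds exactly as in \cite{ST1}, and $\eta$ is simply fixed so that $C\eta<\sqrt{\epsilon}/2$.
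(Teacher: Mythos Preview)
Your proposal is correct and follows essentially the same route as the paper: both arguments use the compactness of $\cS$ in $H^1$ (Theorem~1) to obtain uniform smallness of the local energy of the stationary family near a pole, transfer this smallness to $u$ via the closeness hypothesis, and then invoke the small-energy regularity results of Sections~3--4 inside a backward light cone. The only cosmetic difference is that the paper works with the shrinking radius $T^*-t$ and the energy-flux monotonicity $E_R(u(T_1))\geq E_{R+T_1-T_2}(u(T_2))$, whereas you fix a single radius $\rho_0$ and invoke finite speed of propagation directly; these are equivalent packagings of the same estimate.
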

\begin{proof}
Since the equation is radial we know that the first singularity, if there is one, will happen at one of the poles, which we take to be the north pole without loss of generality. For $t\in [0,T^*)$ and $R$ small, let
\begin{align*}
D_R(t):=\{(r^\p,\theta^\p,t)\big|0\leq r^\p\leq R\},
\end{align*}
and
\begin{align*}
E_R(u(t)):=\frac{1}{2}\int_{D(t)}(|\nabla u(t)|^2+|u_t(t)|^2)dg_{S^2}.
\end{align*}
Local conservation of energy implies
\begin{equation}\label{energyinequality}
E_R(u(T_1))\geq E_{R+T_1-T_2}(u(T_2)),
\end{equation}
for $T_1\leq T_2\leq T^*.$ The hypothesis of the theorem implies that for every $t$ we can find $v^t\in\cS$ such that
\begin{align*}
\|u(t)-v^t\|_{H^1(S^2)}+\|u_t(t)-\omega Av^t\|_{L^2(S^2)}<\eta^2.
\end{align*}
Of course the same inequality holds if we replace $S^2$ by $D_{T*-t}(t)$ and therefore
\begin{align*}
2E_{T^*-t}(u(t))\leq \|\nabla v^t\|^2_{L^2(D_{T^*-t}(t))} +\|\partial_t v^t\|^2_{L^2(D_{T^*-t}(t))} +\eta^2.
\end{align*}
It follows that since $\cS$ is compact we can find a $T^\p$ close enough to $T^*$, such that if we choose $\eta$ very small we have $E_{T^*-t}(u(t))<\epsilon/2$ for all $t \in [T^\p,T^*).$ Therefore if $\delta$ is a very small positive number we have $E_{T^*-T^\p+\delta}(u(T^\p))<\epsilon.$ It follow from (\ref{energyinequality}) that for all $T\in[T^\p,T^*)$
\begin{align*}
E_{\delta}(u(T))\leq E_{T^*-T+\delta}(u(T))\leq E_{T^*-T^\p+\delta}(u(T^\p))<\epsilon.
\end{align*}
Since this inequality holds for all $T$ near $T^*,$ according to our regularity theorem from the hyperbolic section the solution cannot develop a singularity at $T^*.$
\end{proof}
We are now ready for the stability theorem
\begin{theorem}
Let $u^0 \in \cS$ be fixed and let $\tu$ be the corresponding stationary and equivariant map
\begin{align*}
\tu (x,t) := e^{A\omega t}u^0(x).
\end{align*}
Then $\tu$ is stable in the following sense: Let $\eta$ be as in the previous theorem. For every $\epsilon \in (0,\eta)$ there exists a $\delta$ such that if $(f,g)$ is a pair of equivariant initial data for our Cauchy problem satisfying
\begin{align*}
d((f,g),(u^0,\omega A u^0))<\delta,
\end{align*}
then the classical solution $u$ for this pair of initial data defined on $[0,T^*)\times S^2$ remains $\epsilon-$close to $\cS$ in the energy norm for all $t\in[0,T^*),$ i.e.
\begin{align*}
d(\uu^t,\mathfrak{S})<\epsilon,~~~~~~~~ \forall t\in[0,T^*).
\end{align*}
\end{theorem}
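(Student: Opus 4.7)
The plan is the standard Lyapunov argument based on the two conservation laws of the wave map flow: energy and the ``charge'' associated with the $SO(2)$-symmetry of the action $\mathcal{Q}$. Define
\[
\mathcal{L}(u,u_t) := E(u,u_t) - \omega\int_{S^2}\langle u_t, Au\rangle\,dvol_{S^2} = \frac{1}{2}\|u_t-\omega Au\|_{L^2}^2 + \cGo(u),
\]
where $E$ is the standard wave map energy. Both summands of the first expression are conserved along any classical solution on $[0,T^*)$: energy conservation is routine, and conservation of the charge $\int\langle u_t, Au\rangle\,dvol_{S^2}$ is Noether's theorem applied to the $SO(2)$-invariance of $\mathcal{Q}$ (here $A$ is the generator of an isometric action on the embedded target, and equivariance of the flow is preserved by uniqueness of the Cauchy problem). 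Evaluated on the stationary solution $\tu = e^{A\omega t}u^0$ one has $\tu_t = \omega A\tu$ and, by rotation invariance of $\cGo$, $\cGo(\tu(t)) = \cGo(u^0) = G^*$, so $\mathcal{L}(\tu,\tu_t)\equiv G^*$.

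The theorem is then proved by contradiction. Suppose there exist $\epsilon_0\in(0,\eta)$, equivariant initial data $(f_n,g_n)\to(u^0,\omega Au^0)$ in $Y$, corresponding classical solutions $u_n$ on $[0,T^*_n)$, and times $t_n\in[0,T^*_n)$ with $d(\uu_n^{t_n},\mathfrak{S})\geq\epsilon_0$. Since $\mathcal{L}$ is continuous on $Y$,
\[
\mathcal{L}(f_n,g_n)\longrightarrow \mathcal{L}(u^0,\omega Au^0)=G^*,
\]
and by conservation $\mathcal{L}(\uu_n^{t_n})\to G^*$ as well. Because $\cGo\geq G^*$ on $X_l$ and $\tfrac{1}{2}\|u_t-\omega Au\|_{L^2}^2\geq 0$, both summands in $\mathcal{L}$ must separately approach their lower bounds, i.e.
\[
\cGo(u_n(t_n))\longrightarrow G^*,\qquad \|\partial_t u_n(t_n)-\omega A u_n(t_n)\|_{L^2}\longrightarrow 0.
\]

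The Cauchy flow preserves the equivariance ansatz, and since $t\mapsto u_n(t)\in H^1(S^2)$ is continuous while the degree is a continuous $\mathbb{Z}$-valued functional on $H^1$, the degree is preserved as well; hence $u_n(t_n)\in X_l$ and $\{u_n(t_n)\}$ is a minimizing sequence for $\cGo$ on $X_l$. Theorem 1 (the main result of Section 2) now supplies a subsequence converging strongly in $H^1$ to some $v\in\cS$. In particular $u_n(t_n)\to v$ in $L^2$, so the second limit displayed above forces $\partial_t u_n(t_n)\to \omega A v$ in $L^2$, giving $\uu_n^{t_n}\to (v,\omega Av)\in\mathfrak{S}$ in $Y$. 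This contradicts $d(\uu_n^{t_n},\mathfrak{S})\geq\epsilon_0$ and establishes the existence of $\delta(\epsilon)$. Once the inequality $d(\uu^t,\mathfrak{S})<\epsilon<\eta$ has been maintained throughout $[0,T^*)$, Theorem 4 upgrades $u$ to a classical solution on $[0,T^*]$, which can then be continued; in particular $T^*=\infty$.

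The only genuine analytic obstacle is the subsequential strong $H^1$ convergence of minimizing sequences in $X_l$, and that is precisely the content of Theorem 1 from the elliptic section (where the concentration compactness argument, including the elimination of case (iii) via Lemma 2, was the nontrivial work). Everything else is a soft consequence of the two conservation laws and the triangle inequality in $Y$, which is why the proof is essentially identical to the corresponding argument in \cite{ST1}.
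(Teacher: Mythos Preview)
Your proof is correct and follows essentially the same approach as the paper: both argue by contradiction, use conservation of energy and charge to show that $\{u_n(t_n)\}$ is a minimizing sequence for $\cGo$ in $X_l$ with $\|\partial_t u_n(t_n)-\omega Au_n(t_n)\|_{L^2}\to 0$, and then invoke Theorem~1 to extract a subsequence converging in $Y$ to a point of $\mathfrak{S}$. Your packaging of $E-\omega Q$ as a single conserved Lyapunov functional $\mathcal{L}=\tfrac{1}{2}\|u_t-\omega Au\|_{L^2}^2+\cGo(u)$ is a slightly cleaner presentation of the same identity the paper writes as $E=\cGo+\omega Q+D$, and your explicit remark that degree is preserved along the flow (hence $u_n(t_n)\in X_l$) fills in a point the paper leaves implicit.
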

Note that combined with the previous theorem this theorem implies that $T^*=\infty$ so $u$ is a globally smooth solution.
\begin{proof}
We argue by contradiction. If the statement of the theorem is false then for some $\epsilon < \eta$ we can find a sequence of equivariant initial data $(f_n,g_n),$ with corresponding solutions $u_n,$ satisfying $d((f_n,g_n),(u^0,\omega Au^0))<\frac{1}{n},$ and a sequence of times $t_n$ such that for all $v \in \cS$
\begin{equation}\label{contradiction}
\|u_n(t_n)-v\|_{H^1}+ \|\partial_t u_n(t_n)-\omega Av\|_{L^2}\geq \epsilon.
\end{equation}
We claim that $\{u_n(t_n)\}$ is a minimizing sequence for $\cGo$ in $X_l.$ Our claim together with the main theorem of the elliptic section imply that $\{u_n(t_n)\}$ will have a subsequence converging to a point in $\cS.$ We will use this to contradict (\ref{contradiction})\\
To prove the claim we first introduce the conserved quantity charge $Q(u(t)):= \int_{S^2}Au(t)\cdot u_t(t) dg_{S^2}= \frac{1}{l}\int_{S^2}(u_\theta\cdot u_t)(t) dg_{S^2}.$ Since $\tu_t = A\omega \tu$ the charge at zero is $Q(\tu(0))= \omega\int_{S^2}|A\tu|^2 dg_{S^2}$ and therefore $E(\tu(0))= \cGo(u^0)+ \omega Q(\tu(0)).$ Conservation of energy now implies
\begin{align*}
&E(u_n(t_n))= E(u_n(0))\leq E(\tu(0))+\frac{c}{n}\\
&= \cGo(u^0)+\omega Q(\tu(0))+\frac{c}{n},
\end{align*}
for a generic constant c.\\
Let $D(u(t)):=\frac{1}{2}\int_{S^2}|u_t(t)-\omega Au(t)|^2dg_{S^2}$ For a general $u$ we have
\begin{align*}
&\cGo(u)+\omega Q(u)+ D(u)\\
&= \frac{1}{2}\int_{S^2}\Big(|\nabla u|^2-\omega^2|Au|^2+2\omega Au\cdot u_t+|u_t|^2+\omega^2|Au|^2-2\omega Au\cdot u_t\Big)dg_{S^2}\\
&= E(u),
\end{align*}
and therefore by conservation of charge
\begin{align*}
&E(u_n(t_n))= \cGo(u_n(t_n))+\omega Q(u_n(t_n)) + D(u_n(t_n))\\
&= \cGo(u_n(t_n))+\omega Q(u_n(0)) + D(u_n(t_n))\\
&\geq \cGo(u_n(t_n))+\omega Q(\tu(0))+D(u_n(t_n))-\frac{\tilde{c}}{n},
\end{align*}
for some generic constant $\tilde{c}.$ With $d(l,\omega)$ denoting the greatest lower bound for $\cGo$ on $X_l,$ our two conservation inequalities imply
\begin{align*}
&d(l,\omega)\leq\cGo(u_n(t_n))\leq D(u_n(t_n))+ \cGo(u_n(t_n))\leq \cGo(u^0)+\frac{C}{n}\\
&=d(l,\omega)+\frac{C}{n},
\end{align*}
which implies that $\{u_n(t_n)\}$ is a minimizing sequence for $\cGo$ in $X_l,$ and that $D(u_n(t_n))\rightarrow 0$ as $n\rightarrow \infty.$ By the main theorem of the elliptic section, there exists a $v \in \cS$ such that $\|u_n(t_n)-v\|_{H^1}\rightarrow 0$. Moreover
\begin{align*}
\|\partial_tu_n(t_n)-\omega A v\|^2_{L^2}\leq D(u_n(t_n))+\|\omega A u_n(t_n)-\omega A v\|^2_{L^2}\rightarrow 0,
\end{align*}
giving us the desired contradiction.
\end{proof}
\acknowledgements{I would like to thank my advisor Professor Joachim Krieger for introducing me to this topic, and for many helpful discussions. I would also like to thank Professor Tahvildar-Zadeh for many fruitful discussions.}
   
\end{document}